\DeclarePairedDelimiter\floor{\lfloor}{\rfloor}
\numberwithin{equation}{section}
\theoremstyle{definition} \newtheorem{theorem}[equation]{Theorem}
\theoremstyle{definition} 
\theoremstyle{definition} \newtheorem{definition}[equation]{Definition}
\theoremstyle{definition} \newtheorem{lemma}[equation]{Lemma}
\theoremstyle{definition} 
\theoremstyle{definition} \newtheorem{corollary}[equation]{Corollary}
\theoremstyle{remark} \newtheorem{remark}[equation]{Remark}
\theoremstyle{remark} \newtheorem{claim}[equation]{Claim}
\title{Round Trees and Conformal Dimension in Random Groups: low density to high density}
\author{Jordan Frost}
\begin{document}

\maketitle

\begin{abstract}
We investigate conformal dimension for the class of infinite hyperbolic groups in the Gromov density model $\mathcal{G}^d_{m,l}$ of random groups with $m \geq 2$ fixed generators, density $0 < d < 1/2$ and relator length $l \to \infty$.
Our main result is a lower bound linear in $l$ at all densities $0 < d < 1/2$ achieved by building undistorted round trees coming directly from lower density Gromov random groups.
\end{abstract}

\tableofcontents

\section{Introduction}\label{s : introduction}

An important invariant of the large scale geometry of a hyperbolic group $G$ is the conformal dimension of its boundary $\partial_{\infty}G$.
It captures both the metric and analytic properties of the space.
Conformal dimension is a quasi-isometric invariant of hyperbolic groups \cite{Mackay-Tyson-survey-conformal-dimension} having been introduced by Pansu \cite{Pansu-conformal-dimension} as a tool for the study of the classical rank one symmetric spaces and their lattices.
It also has various links to actions on $L_p$-spaces \cite{Bourdon-cohomologie-and-isometric-actions-on-Lp-spaces}.

Following the Gromov-Ollivier hyperbolicity $1/2$-theorem \cite{Gromov-invariants, ollivier-survey} it is thus a fruitful problem to see what can be said about the conformal dimension of the boundary of `generic groups' in the Gromov random group model $\mathcal{G}^d_{m,l}$.
They are  known to have boundaries at infinity homeomorphic to the Menger sponge \cite{Dahmani-Guirardel-Przytycki-random-groups-do-not-split}.
To distinguish these groups via the boundary it is natural to consider the conformal dimension of the boundaries.
In this paper we are interested in estimating (from below) this analytic invariant given the algebraic information for the random group model $\mathcal{G}^d_{m,l}$.

This work was carried out initially in \cite{Mackay-conf-rand-12, Mackay-conf-rand-16} culminating in the result of, for $d < \frac{1}{8}$, $\text{Confdim}(\partial_\infty G) \asymp_C dl / \lvert \log d \rvert$   \cite[Thm.1.3]{Mackay-conf-rand-16} by using the small cancellation properties of low density random groups.
The notation $A \asymp_C B$ stands for $ B / C \leq A \leq C B$.
This implies that as $l \to \infty$ for $d < 1/8$ the model $\mathcal{G}^d_{m,l}$ passes through infinitely many quasi-isometry classes.
As a corollary, at the appropriate density, the density can be recovered, up to a power, by the conformal dimension of the boundary and the Euler characteristic of the group.

The methods used to obtain those results are a mixture of $l_p$-cohomology (following Bourdon-Kleiner) \cite{Bourdon-Kleiner-combinatorial-modulus-lowener-property-coxeter-groups, Bourdon-Kleiner-applications-lpcohomology-boundaries-gromov-hyp} and walls in the Cayley complex (building on Wise and Ollivier-Wise) \cite{Wise-cubulating-small-cancellation, Ollivier-Wise-cubulating-low-density-random-groups} to find upper bounds.
To find lower bounds the author of \cite{Mackay-conf-rand-16} constructs `combinatorial round trees', see Definition \ref{d : round tree}, in the Cayley complex analogous to the `round trees' Gromov defines \cite[Sec.7.C3]{Gromov-invariants}.

For introductions and more background on random groups we refer the reader to \cite{ollivier-survey}, and on conformal dimension to \cite{Mackay-Tyson-survey-conformal-dimension}.

The culmination of that work and ours in this paper is the following Theorem which is our headline result.

\begin{theorem}(Conformal dimension bound)\label{t : headline}
There exists $C\geq 1 $ so that for all $m \geq 2$ and $0 < d < 1/2$ a group $G \sim \mathcal{G}^{d}_{m,l}$ in the Gromov density model is infinite hyperbolic and satisfies:

$$ \frac{d (1-2d)^5 l }{C \lvert \log(d(1/2 - d) ) \rvert} \leq \frac{\text{Confdim}(\partial_\infty G)}{\log(2m-1)} \leq \frac{C d l}{ (1 - 2d)\lvert \log d \rvert } $$
\noindent
with overwhelming probability.
Therefore, as $l \to \infty$ the collection of groups in $\mathcal{G}^{d}_{m,l}$ passes through infinitely many distinct quasi-isometry classes at all densities.
\end{theorem}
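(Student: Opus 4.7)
The plan is to prove the upper and lower bounds separately, noting first that infinite hyperbolicity at density $d < 1/2$ is the Gromov--Ollivier $1/2$-theorem already cited, and that the corollary about infinitely many quasi-isometry classes follows formally from the lower bound being unbounded in $l$ together with the quasi-isometric invariance of conformal dimension.

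For the upper bound I would invoke the $\ell_p$-cohomology / combinatorial modulus framework of Bourdon--Kleiner combined with the wall structures in the Cayley complex coming from the cubulation methods of Wise and Ollivier--Wise, following the strategy of \cite{Mackay-conf-rand-16}. Since those results were stated for $d < 1/8$, the task is to check that the wall-based upper bound goes through uniformly up to $d = 1/2$, with the factor $1/(1-2d)$ absorbing the worst-case growth in the number of $2$-cells incident to a vertex as the density approaches $1/2$; the counting is otherwise the same as in the low density case and produces the $dl/|\log d|$ behaviour of the numerator as before.

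For the lower bound---the heart of the paper---I would build an undistorted combinatorial round tree (Definition \ref{d : round tree}) inside the Cayley $2$-complex $X$ of $G$, of depth linear in $l$ and with branching dictated by $2m-1$. The natural strategy, hinted at by the abstract, is to extract the round tree from a lower-density random quotient. Concretely, split the relator set as $R = R_0 \sqcup R'$ where $R_0$ is a uniformly random subset at density $d_0 = d_0(d)$, chosen small enough that the existing low-density round tree construction applies inside the Cayley complex $X_0$ of $G_0 = \langle S \mid R_0 \rangle$, and large in $d$ so as to retain depth linear in $l$. This produces, with overwhelming probability, a combinatorial round tree $T \subset X_0$ of the required depth. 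The remaining step is to show that under the quotient $X_0 \twoheadrightarrow X$ the tree $T$ maps to an undistorted round tree in $X$: no relator in $R'$ should cross two branches of $T$, create a shortcut between distinct rays, or attach a disk that collapses its geometry. Once $T$ descends undistortedly its boundary embeds as a uniformly perfect Cantor subset of $\partial_\infty G$ with controlled Ahlfors regular conformal dimension, yielding the claimed lower bound.

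The main obstacle is proving the survival of $T$ under the additional relators in $R'$. Because $|R'|$ is exponential in $dl$ while $T$ already contains exponentially many geodesic rays of length $\asymp l$, a crude union bound over pairs (relator) $\times$ (pair of rays) is far too weak. One has to isolate the precise combinatorial configurations in which a relator from $R'$ could damage $T$---essentially an isoperimetric analysis showing that any offending disk must span a controlled shape between two branches---and then bound the probability of each such configuration using independence of the relators. The polynomial loss $(1-2d)^5$ and the denominator $|\log(d(1/2-d))|$ should emerge from balancing this: $d_0$ must be pushed small enough that the bad configurations are beaten by the small cancellation estimates at density $d_0$, but not so small that the depth of $T$ degenerates. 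Making this trade-off quantitative is where the bulk of the technical work is expected.
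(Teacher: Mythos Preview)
Your overall strategy for the lower bound matches the paper's: build an undistorted round tree at a low auxiliary density $d_s$ and show it survives the quotient to density $d_t = d$. You also correctly flag the central obstacle, that a naive union bound over relators and rays of the tree is too weak. However, you do not supply the mechanism that overcomes it, and without that idea the argument does not close.

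The missing ingredient is to build a deliberately \emph{thin} round tree at density $d_s$: at each branching step one keeps only $(2m-1)^{\beta\eta l}$ of the available $(2m-1)^{\eta l}$ extensions, with $\beta$ a small constant depending on $d_t$ (Theorem~\ref{t : build low d round trees}). This controls the number of \emph{emanating words} (labels of geodesic subpaths issuing from the basepoint, Definition~\ref{d : emanating words}), yielding $|\mathbf{E}_k| \le P(l)(2m-1)^{gk}$ with $g = g(\beta)$ small (Lemma~\ref{l : counting emanating words intrinsic}). Any diagram witnessing distortion of the tree in $K_{d_t}$ then becomes a \emph{restricted abstract diagram} (Definition~\ref{d : abstract-vkmp}) with at least half its boundary labelled by emanating words; Theorem~\ref{t : restricted abstract diagram rule out} bounds the probability that a single such diagram is fillable by $\mathcal{R}^{d_t}_{m,l}$ by $2m(2m-1)^{(d_t-1/2)l}$, and now the union bound over labels and diagram shapes does succeed provided $d_s C + g + d_t < 1/2$ (Lemma~\ref{l : filling diagrams along paths}). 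The factors $(1-2d)^5$ and $|\log(1/2-d)|$ emerge from this balancing of $d_s$, $\beta$, $\eta$, $H$. Without the thinning step the emanating-word count is $(2m-1)^{k}$ and the union bound fails at every positive density, so this is not a detail one can defer.

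On the upper bound: nothing new is required. The inequality is \cite[Prop.~1.7]{Mackay-conf-rand-12}, already valid for all $0<d<1/2$; the wall-based methods you mention sharpen it only at low density and are not what is cited here. Likewise the lower bound for $d<1/8$ is already \cite[Thm.~1.3]{Mackay-conf-rand-16}; the paper's new contribution is precisely the range $1/8 \le d < 1/2$, combined with the existing low-density bound to give the stated unified expression.
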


Our contribution to Theorem \ref{t : headline} is the following result with the previous bounds being covered by \cite[Prop.1.7]{Mackay-conf-rand-12} for the upper at all densities and \cite[Thm.1.3]{Mackay-conf-rand-16} for the lower at small densities.

\begin{theorem}(Linear lower bound)\label{t : main result}
There exists $C\geq 1 $ so that for all $m \geq 2$ and $1/8 \leq d < 1/2$ a group $G \sim \mathcal{G}^{d}_{m,l}$ in the Gromov density model is infinite hyperbolic and satisfies

$$ \frac{\text{Confdim}(\partial_\infty G)}{\log(2m-1)} \geq \frac{(1-2d)^5 l}{C \lvert \log(1/2 - d) \rvert}$$ 

\noindent
with overwhelming probability.
\end{theorem}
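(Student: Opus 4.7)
The plan is to couple the random group $G$ at density $d$ with a random group at a much smaller density $d_0 = d_0(d)$, and import Mackay's combinatorial round tree from the low-density Cayley complex. Specifically, a random set $R$ of $(2m-1)^{dl}$ relators of length $l$ can be drawn in two stages: first a subset $R_0$ of size $(2m-1)^{d_0 l}$ with $d_0 < 1/8$, then an additional set $R_1 = R \setminus R_0$. Let $G_0 = \langle S \mid R_0 \rangle$ and $G = \langle S \mid R \rangle$, with Cayley $2$-complexes $X_0$ and $X$ respectively; there is an obvious equivariant map $X_0 \to X$ attaching the additional $2$-cells of $R_1$. I would choose $d_0$ to be of order $(1-2d)/C'$, so that the $(1-2d)^5$ and $\lvert\log(1/2-d)\rvert$ in the target bound arise naturally from the low-density bound of \cite{Mackay-conf-rand-16} applied at density $d_0$.

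\textbf{Importing the round tree.} By \cite[Thm.~1.3]{Mackay-conf-rand-16}, with overwhelming probability $X_0$ contains a combinatorial round tree $T \subset X_0$ which is undistorted, has branching of suitable order, and witnesses a lower bound on $\mathrm{Confdim}(\partial_\infty G_0)$ linear in $d_0 l$. My goal is to show that the image of $T$ in $X$ remains an undistorted combinatorial round tree (with slightly degraded constants). For this I need two things to survive the quotient $X_0 \to X$: (i) no relator $r \in R_1$ creates a short loop passing through vertices of $T$ that are far apart in $T$, i.e.\ no $r$ induces a ``shortcut'' pairing two distant points of the round tree; and (ii) the branching structure of $T$ persists — no $r$ identifies distinct branches of $T$ at comparable depth.

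\textbf{Probabilistic preservation (the hard part).} Controlling (i) and (ii) is the main obstacle. For each pair of points $(p,q)$ on $T$ at combinatorial distance $n$, I would estimate $\Pr[\,\exists\, r \in R_1 \text{ realising a short loop through } p,q\,]$ by a union bound over:

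$\bullet$ the number of relevant pairs at scale $n$ on $T$, which grows like $\lvert T^{(0)}\cap B(n)\rvert^2$, controlled by the (exponential) branching rate of the round tree;

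$\bullet$ the number of candidate relators, $(2m-1)^{dl}$; and

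$\bullet$ the probability that a random cyclic word of length $l$ admits a decomposition with a large subword realising a given element of $G_0$, which by the standard Ollivier-type counting (see \cite{ollivier-survey}, compare \cite{Ollivier-Wise-cubulating-low-density-random-groups}) decays like $(2m-1)^{-(1/2-d) l}$ per relator per pair. The net exponent is $dl - c(1-2d)l + (\text{branching terms linear in $l$})$, which can be made negative by taking $d_0$ proportional to $(1-2d)$, up to the $\lvert\log(1/2-d)\rvert$ loss coming from optimising the scale of overlap. A Markov/union argument then yields overwhelming probability that every pair in $T$ retains its distance up to a multiplicative constant in $X$, i.e.\ $T \hookrightarrow X$ is undistorted.

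\textbf{Conclusion.} Once the round tree $T$ is undistorted in $X$, the standard lower bound on $\mathrm{Confdim}(\partial_\infty G)$ via the combinatorial $p$-modulus on the boundary at infinity of a round tree (as used in \cite[Sec.~4]{Mackay-conf-rand-16}) gives the claimed linear-in-$l$ lower bound, with the dependence on $d$ tracked through the choice of $d_0$. Infiniteness and hyperbolicity of $G$ at $d<1/2$ follow from the Gromov--Ollivier theorem \cite{Gromov-invariants, ollivier-survey}. The central difficulty, and where essentially all of the work lies, is step (iii): carefully bounding the number of potential shortcuts afforded by the round tree's branching structure and combining this with the Ollivier-type overlap estimates uniformly for $d$ approaching $1/2$, which is the source of the $(1-2d)^5$ factor.
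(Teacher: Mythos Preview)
Your overall architecture---couple the density-$d$ presentation with a sub-presentation at a small density $d_0$, build a round tree in the low-density Cayley complex, and show it survives the quotient---is exactly the paper's strategy. But there is a genuine gap at the step you flag as ``the hard part,'' and it is not just a matter of working out constants.

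The issue is that Mackay's round tree, imported as-is, has \emph{too many emanating words} for your union bound to close. In your sketch the branching contribution enters as ``$\lvert T^{(0)}\cap B(n)\rvert^2$, controlled by the (exponential) branching rate of the round tree,'' but you never check that this growth rate is beaten by the $(2m-1)^{-(1/2-d)l}$ decay. It is not. At density $d_0$ Mackay's tree has vertical branching $(2m-1)^{\eta l}$ with $\eta\asymp d_0$, and the number of labels of geodesic segments of length $k$ in the tree grows like $(2m-1)^{ck}$ with $c$ of order $1$ (it is bounded only by the trivial $(2m-1)^k$). Since the diagrams you must rule out have boundary length of order $\epsilon^{-1}l$ (from the isoperimetric inequality at density $d$), the union over possible boundary labels overwhelms the $(2m-1)^{-\epsilon l}$ saving per diagram, regardless of how you choose $d_0$.

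The paper's fix, and its main new idea, is to build a \emph{thinned} round tree (Theorem~\ref{t : build low d round trees}): one introduces an extra parameter $\beta\in(0,1)$ and at each extension step keeps only $(2m-1)^{\beta\eta l}$ of the $(2m-1)^{\eta l}$ available branches, chosen uniformly over basepoints so that bracket labels determine cell labels. This yields a controllable count of emanating words (Lemma~\ref{l : counting emanating words intrinsic}): roughly $(2m-1)^{(2\beta+40/(d_0H))k}$, which can be made $(2m-1)^{gk}$ with $g\ll\epsilon$ by taking $\beta\asymp\epsilon^2$ and $H\asymp\epsilon^{-5}$. Only then does the restricted-diagram rule-out (Theorem~\ref{t : restricted abstract diagram rule out}, Lemma~\ref{l : filling diagrams along paths}) go through. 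Note also that the paper takes $d_0\asymp\epsilon^3$, not $\epsilon$; the $(1-2d)^5$ in the final bound comes from $\beta\eta\asymp\epsilon^2\cdot\epsilon^3$, and the $\lvert\log(1/2-d)\rvert$ from $\log H$, not from the low-density conformal dimension bound applied at $d_0$ as you suggest. Finally, the undistortion argument is not a direct union bound over pairs but proceeds by first showing emanating paths are $18\epsilon^{-1}l$-local geodesics in the target (Lemma~\ref{l : emanating rays are local-geodesics}), hence quasi-geodesics, and then a tripod comparison (Lemma~\ref{l : quasi-isometric-embedding}).
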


Another approach of finding lower bounds of $\text{Confdim}(\partial_{\infty} G)$ is via by finding fixed points for groups acting on $L^p$ noticed by Bourdon \cite{Bourdon-cohomologie-and-isometric-actions-on-Lp-spaces}.
This was used by \cite{Drutu-Mackay-random-groups-random-graphs-eigenvalies-Lp-laplacians, Laat-Salle-banach-space-actions-L2-spectral-gap} to find lower bounds for the Triangle model of random groups.
For details about this model see \cite{Zuk-propertyT-kazhdan-constants-for-discrete-groups, Kotowski-Kotowski-random-groups-and-propertyT-Zuk-revisited, Antoniuk-Luczak-Swicatkowski-random-triangular-groups-at-a-third}.

A very recent major development of this method by Oppenheim to the Gromov density model, that came out during the writing up of our work, applies to the Gromov density
model and finds new fixed point results for random groups at density $ 1/3 <
d < 1/2 $ \cite[Thm.1.5]{Oppenheim-banach-fixed-point-theorems-groups}. 
This results in a linear in $l$ lower bound \cite[Thm.1.8]{Oppenheim-banach-fixed-point-theorems-groups}.
Our linear in $l$ lower bound on $\text{Confdim}(\partial_{\infty} G)$ is entirely new for the range $1/8 \leq d \leq 1/3$ and applies for all $d < 1/2$; unlike Oppenheim we say nothing about fixed points.

On the level of constants, our lower bounds are stronger than Oppenheim's for $d$ close to $1/3$ whilst his are stronger for $d$ close to $1/2$.
Combining that work with Theorem \ref{t : headline} lets us report the following as the state of the art for the Gromov density model.

\begin{corollary}(Theorem \ref{t : headline} with \cite[Thm.1.8]{Oppenheim-banach-fixed-point-theorems-groups})\label{t : state of the art}
There exists $C\geq 1 $ so that for all $m \geq 2$ and $0 < d < 1/2$ a group $G \sim \mathcal{G}^{d}_{m,l}$ in the Gromov density model is infinite hyperbolic and satisfies

$$ \frac{d l}{C \lvert \log d \rvert} \leq \frac{\text{Confdim}(\partial_\infty G)}{\log(2m-1)} \leq \frac{C d l}{ (1 - 2d) \lvert \log d \rvert}$$ 

\noindent
with overwhelming probability as $l \to \infty$.
\end{corollary}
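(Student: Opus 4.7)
The plan is to take the upper bound directly from Theorem \ref{t : headline} and to obtain the lower bound by splitting the density range $(0, 1/2)$ at some cutoff $d_0 \in (1/3, 1/2)$, using Theorem \ref{t : headline} on $(0, d_0]$ and Oppenheim's \cite[Thm.1.8]{Oppenheim-banach-fixed-point-theorems-groups} on $[d_0, 1/2)$, and then verifying in each case that the resulting bound dominates a constant multiple of $dl/|\log d|$. The upper bound is exactly the right-hand inequality in Theorem \ref{t : headline}, so no additional work is needed there.

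For $d \in (0, d_0]$, Theorem \ref{t : headline} gives the lower bound $d(1-2d)^5 l / (C_1 |\log(d(1/2-d))|)$. Two simplifications apply uniformly on this interval: $(1-2d)^5 \geq (1-2d_0)^5 > 0$ is a positive constant, and $|\log(d(1/2-d))| = |\log d| + |\log(1/2-d)|$ where the second summand is bounded above by $|\log(1/2-d_0)|$ while the first is at least $|\log d_0|$ and grows as $d \to 0$. Therefore $|\log(d(1/2-d))| \asymp |\log d|$ uniformly in $d \in (0, d_0]$, which collapses the headline bound to the desired form $dl/(C_2|\log d|)$.

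For $d \in [d_0, 1/2)$ I would invoke Oppenheim's theorem, which furnishes a linear-in-$l$ lower bound of the shape $\text{Confdim}(\partial_\infty G)/\log(2m-1) \geq c_O l$. On this subinterval the function $d/|\log d|$ is continuous and extends continuously to $1/(2\log 2)$ at the endpoint $d = 1/2$, and so is uniformly bounded above by some constant $M$. Consequently $c_O l \geq (c_O/M) \cdot dl/|\log d|$, which is of the required form. Amalgamating the constants from the two ranges yields a single $C$ valid on all of $(0,1/2)$. The main subtlety I foresee is ensuring that Oppenheim's constant $c_O$ is bounded below uniformly on $[d_0, 1/2)$ rather than degenerating as $d \to 1/2$; the remark preceding the corollary, that Oppenheim's constants beat those of the headline as $d \to 1/2$, is precisely what makes this work, and may dictate how close to $1/3$ one can push $d_0$.
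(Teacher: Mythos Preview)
The paper does not supply a proof of this corollary; it is stated in the introduction as the result of ``combining that work with Theorem \ref{t : headline}'' and left at that. Your proposal is therefore not competing against any written argument, and the route you outline --- split the density range at some $d_0 \in (1/3,1/2)$, use the headline lower bound on $(0,d_0]$ after absorbing the $(1-2d)^5$ and $|\log(1/2-d)|$ factors into constants, and use Oppenheim on $[d_0,1/2)$ where $d/|\log d|$ is bounded --- is exactly the natural reading of that sentence. Your reductions on $(0,d_0]$ are correct: $|\log(d(1/2-d))| = |\log d| + |\log(1/2-d)|$ with the second summand bounded on that interval, and $(1-2d)^5 \ge (1-2d_0)^5$.

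The only point that is genuinely external to the paper is the one you flag yourself: that Oppenheim's constant stays bounded away from zero as $d \to 1/2$. This is not something you can extract from the present paper beyond the informal remark that his bounds are ``stronger for $d$ close to $1/2$''; to make the argument complete one must look at \cite[Thm.1.8]{Oppenheim-banach-fixed-point-theorems-groups} directly and verify the behaviour of his constant in $d$. With that input your argument is complete, and it is the intended one.
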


Theorem \ref{t : main result} is shown by building undistorted combinatorial round trees (Definition \ref{d : round tree}) at any density $d_t \in [1/8,1/2)$.
We show there is a smaller density $d_s \in (0,1/8)$ where a suitable round tree exists (in fact, many do) and this round tree remains undistorted under the natural quotient map. 
This is done in Section \ref{s : build round trees all densities} where parameters $\beta, \eta$ and $H$ of the round tree are also defined.

\begin{theorem}(High density round trees)\label{t : intro round trees everywhere}
For all $m \geq 2$ and $1/8 \leq d_t < 1/2$ there exists $\beta, \eta, d_s, H$ defining a round tree $A \rightarrow K_{d_s} \rightarrow K_{d_t}$ which is an undistorted round tree with overwhelming probability in $\mathcal{G}^{d_s}_{m,l}$ and $\mathcal{G}^{d_t}_{m,l}$.
\end{theorem}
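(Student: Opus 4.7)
The plan is to start at a density $d_s \in (0, 1/8)$ where Mackay's construction in \cite{Mackay-conf-rand-16} already produces combinatorial round trees in $K_{d_s}$, and then to transfer such a round tree across the natural quotient map $K_{d_s} \to K_{d_t}$. In the Gromov density model, moving from density $d_s$ to $d_t > d_s$ amounts to adjoining roughly $(2m-1)^{d_t l} - (2m-1)^{d_s l}$ additional independent random relators of length $l$, so I would treat $K_{d_t}$ as $K_{d_s}$ with extra $2$-cells glued in. The round tree $A$ and its immersion $A \to K_{d_s}$ are then unchanged; what needs to be checked is that the composition $A \to K_{d_s} \to K_{d_t}$ remains undistorted in the denser complex.

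First I would fix parameters. Following Section \ref{s : build round trees all densities}, choose $d_s$ as an explicit function of $d_t$, small enough to sit in $(0,1/8)$ yet large enough that the branching $\beta$, step length $\eta$ and height $H$ of the resulting round tree can be pushed through to yield the $\frac{(1-2d)^5 l}{C|\log(1/2-d)|}$ lower bound on conformal dimension promised by Theorem \ref{t : main result}. The existence of the round tree at density $d_s$ is then furnished, with overwhelming probability in $\mathcal{G}^{d_s}_{m,l}$, by Mackay's low-density construction.

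Second, I would analyse potential distortion. A round tree fails to remain undistorted in $K_{d_t}$ exactly when the new relators (those in $\mathcal{G}^{d_t}_{m,l}$ but not in $\mathcal{G}^{d_s}_{m,l}$) create short identifications between two disjoint branches, or more generally support a van Kampen diagram in $K_{d_t}$ whose boundary traces a long loop inside $A$. I would enumerate the relevant configurations in $A$: pairs of paths of bounded length, small-area sub-diagrams, and candidate small-cancellation violations between old and new $2$-cells. Since $A$ is a deterministic object of controlled combinatorial size, the number of candidate shortcuts is bounded in terms of $\beta,\eta,H$ and $l$.

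Third, I would run a Gromov-style density argument on the new relators. Each candidate shortcut of length $n$ is hit by a uniformly random relator with probability $(2m-1)^{-n}$, and there are roughly $(2m-1)^{d_t l}$ new relators; a union bound over all candidate shortcuts and all diagram shapes must then leave a negative exponent. The main obstacle, and the source of the factor $(1-2d_t)^5$, lies precisely here: one cannot just control pairwise identifications of branches but must simultaneously rule out all short mixed van Kampen diagrams combining old $d_s$-relators with new $d_t$-relators that could fold two branches of $A$ together. Balancing the number of branches one can afford against the density budget $(1-2d_t) l$ available for each layer of the diagrammatic analysis — and closing the union bound uniformly up to height $H$ — is what forces the calibration of $d_s$ in terms of $d_t$ and yields the exact power of $(1-2d_t)$ appearing in Theorem \ref{t : main result}.
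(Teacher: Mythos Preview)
Your overall strategy matches the paper's: build a round tree at a small auxiliary density $d_s$, push it forward along $K_{d_s}\to K_{d_t}$, and rule out van Kampen diagrams in $K_{d_t}$ that would create shortcuts. But there is a genuine gap at the point where you write ``Since $A$ is a deterministic object of controlled combinatorial size, the number of candidate shortcuts is bounded in terms of $\beta,\eta,H$ and $l$.'' The round tree $A$ is \emph{infinite}; it has infinitely many branches and infinitely many pairs of geodesic segments, so you cannot enumerate configurations inside $A$ and then union bound over them. What the paper actually counts is not configurations in $A$ but the set of \emph{emanating words}: the possible labels in $S$ of subpaths of geodesics from $1$ in $A^{(1)}$ (Definition~\ref{d : emanating words} and Lemma~\ref{l : counting emanating words intrinsic}). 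This set is finite, and bounding its size by something like $(2m-1)^{c\epsilon k}$ for paths of length $k$ is exactly what makes the union bound close.

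That count, in turn, is why you cannot simply quote Mackay's construction unchanged. The paper modifies the low-density round tree (Theorem~\ref{t : build low d round trees}) so that at each extension step only $(2m-1)^{\beta\eta l}$ of the $(2m-1)^{\eta l}$ available continuations are used, and so that equal bracket labels force equal $2$-cell labels. These restrictions are what drive $|\mathbf{E}_k|$ down to roughly $(2m-1)^{(2\beta + 40/(dH))k}$; without them the emanating-word count is essentially $(2m-1)^k$ and the exponent in the union bound is never negative. The diagram rule-out itself is also more structured than a generic Gromov density count: one works with \emph{restricted abstract diagrams} (Definition~\ref{d : abstract-vkmp}) in which a specified subpath of $\partial X$ carries a fixed label from the emanating-word set, and shows (Theorem~\ref{t : restricted abstract diagram rule out}) that if at least half of $\partial X$ is so restricted then fillability has probability $\lesssim (2m-1)^{(d_t-1/2)l}$. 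Lemma~\ref{l : filling diagrams along paths} packages this with the emanating-word count; then one shows emanating paths are $18\epsilon^{-1}l$-local geodesics in $K_{d_t}$ (Lemma~\ref{l : emanating rays are local-geodesics}), hence quasi-geodesics, and finally upgrades to a quasi-isometric embedding of all of $A^{(1)}$ via a tripod/slim-triangle argument (Lemma~\ref{l : quasi-isometric-embedding}). Your sketch does not isolate any of these steps, and the first of them --- replacing ``configurations in $A$'' by ``emanating words of a thinned $A$'' --- is the idea without which the argument does not run.
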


\begin{remark}
The geometric properties of $A$ depend on the parameters $\beta, \eta$ and $H$ (see Definition \ref{d : round tree} with $V = (2m-1)^{\beta \eta l}$ for details).
\end{remark}

The impetus of these ideas is Calegari-Walker's methods of building quasi-convex surfaces in Gromov's model \cite{Calegari-Waker-surfsub-rand-15}.
They first build surface subgroups in random one-relator groups, and then show how to ensure the subgroup remains embedded at positive densities by suitably modifying their construction. 
Of particular relevance is the diagrams they rule out in \cite[Thm.6.4.1]{Calegari-Waker-surfsub-rand-15} and the restrictions on the surfaces the defining fatgraph stucture gives \cite[Sec.3]{Calegari-Waker-surfsub-rand-15}.
See Lemma \ref{l : filling diagrams along paths} for our version of this method to rule diagrams out.

Our methods may be useful in showing other quasi-convex geometric structures in low-density random groups remain quasi-convex under the natural quotient map to high-density groups.
The author hopes to make this precise in subsequent work. 


\subsection*{Organisation}

In the preliminaries, Section \ref{s : prelim}, we start by discussing Gromov random groups and their important properties, then diagrams at low-density groups highlighting what we will use later.
Finally, we have a short discussion on conformal dimension introducing the invariant.

In the section on round trees, Section \ref{s : round trees}, we first introduce in Section \ref{ss : round trees geometry} the geometric object of a combinatorial round tree, describe its geometry and state how we can obtain lower bounds in conformal dimension.
We then outline in Section \ref{ss : low density undistorted round trees} our adapted construction of `thin' round trees at low-density random groups where in Section \ref{ss : emanating restrictions} we detail what is meant by `thin'.

In the section on probabilistic diagram rule outs, Section \ref{s : prob diagram rule out}, we define the notion of a restricted abstract diagram, Definition \ref{d : abstract-vkmp}, and following Ollivier \cite{ollivier-survey} in Theorem \ref{t : restricted abstract diagram rule out} we show if the number of restrictions is `large enough' the diagram is not fillable.

In the section on building round trees at high densities, Section \ref{s : build round trees all densities}, we show the round trees constructed in Section \ref{s : round trees} are quasi-isometrically embedded under the natural quotient map from a low-denisity random group.

\subsection*{Acknowledgements}

I would like to thank my PhD supervisor, John Mackay, for his patience, guidance and insights.
I feel extremely lucky for his suggestion to pursue this project and the many interesting and helpful conversations we had during its working out and completion.

\section{Preliminaries}\label{s : prelim}

Our motivation is studying the conformal dimension of random groups in the Gromov density model $\mathcal{G}^{d}_{m,l}$.
We construct lower bounds by building conbinatorial round trees in low-density random groups and prove they stay quasi-isometrically embedded under the natural quotient to higher density groups.

In this section we introduce the model of random groups that we are interested in, what low-density properties we will later use and the conformal dimension of a hyperbolic group.

\subsection{Random Groups}\label{ss : random group notions}

Our random groups will be defined by presentations with a fixed finite generating set $S$ along with a probability measure defining the set of relations whose support is a subset of words in $S$.
Throughout we will only consider finitely presented groups $G$.
This is often what is meant by a random group, invoking Gromov, in the geometric group theory literature, \cite[Sec.9.B]{Gromov-invariants} and \cite[Sec.1]{Gromov-random-walk-in-random-groups}.

The particular model of interest in this paper is the so called Gromov density model from \cite[Sec.9.B]{Gromov-invariants}.

\begin{definition}(Gromov density model)\label{d : gromov density model}
For all $0 \leq d \leq 1$ and $m \geq 2$, $l \geq 2$ integers, \emph{Gromov's model of random groups at density} $d$,  $\mathcal{G}^{d}_{m,l} $, is a probability distribution over groups defined by presentations $\langle S |R \rangle$ where $\lvert S \rvert = m$ and $R$ is a list of $\floor*{(2m-1)^{dl}}$ cyclically reduced words of length $l$.
 
The probability of obtaining a group $G \sim \mathcal{G}^d_{m,l}$ is the probability we obtain $G$ as a presentation $\langle S | R \rangle$ where each cyclically reduced word in $R$ is
chosen with uniform probability and independently among all such words in $S \cup S^{-1}$.
\end{definition}



We will use the symbolism of $G \sim \mathcal{G}^{d}_{m,l}$ to take a random group $G$ from the model $\mathcal{G}^{d}_{m,l}$ under the determining probability measure.
For notational convenience when using densities to count words we will drop the floor function but implicitly count using it.

The parameters $m$, $l$ and $d$ are called the number of generators, relator length and the density of the model.
Random groups were introduced to talk about `generic groups' \cite[Sec.9B]{Gromov-invariants}. 
We need a notion of a property being generic.
A property $\mathcal{P}$ in our context can be viewed as a random variable defined on the state space of $\mathcal{G}^d_{m,l}$ with image in $\{0,1\}$.
Write $\mathcal{P}(G) = 1$ if the property holds for group $G$, $0$ otherwise.

\begin{definition}(Genericity in relator length in Gromov's density model)\label{d : generic notion}
The property $\mathcal{P}$ is said to be \emph{generic in relator length} in the Gromov density model $\mathcal{G}^d_{m,l}$ if we have
$$ \mathbb{P}(\mathcal{P}(G) = 1 \text{ for } G \sim \mathcal{G}^{d}_{m,l} ) \to 1 $$
as $l \to \infty$.
\end{definition}

If a property is generic in relator length in $\mathcal{G}^d_{m,l}$ we will simply say it is generic in $\mathcal{G}^d_{m,l}$ or holds \emph{with overwhelming probability in} $\mathcal{G}^d_{m,l}$. 
This is the usual sense of genericity for the Gromov density model \cite[I.2.a]{ollivier-survey}.
 
We will also adopt the notation $\mathcal{R}^d_{m,l}$ which is the random variable of cyclically reduced words for the model parameters under the measure.
This defines a random group $G = \langle S | \mathcal{R}^d_{m,l} \rangle \sim \mathcal{G}^d_{m,l}.$

See \cite{ollivier-survey} and \cite{Kapovich-Schupp-group-random-mods} for background on random groups, Gromov's density model and other models.
The following foundational results are found in \cite[Sec.I.3.a.Thm.13]{ollivier-survey} and originate from Gromov \cite[Sec.9.B]{Gromov-invariants}.

\begin{theorem}(Gromov-Ollivier's genericity of hyperbolicity)\label{t : hyperbolic generic}
For all $m \geq 2$, $0 < d < 1/2$ the group $G \sim\mathcal{G}^d_{m,l}$ is infinite hyperbolic with hyperbolicity constant satisfying $\delta \leq 4l /(1-2d)$ in $\mathcal{G}^d_{m,l}$ with overwhelming probability.
\end{theorem}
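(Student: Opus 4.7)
The plan is to establish a linear isoperimetric inequality for reduced van Kampen diagrams over a random presentation $\langle S \mid \mathcal{R}^d_{m,l}\rangle$ with overwhelming probability, and then convert this inequality into hyperbolicity with an explicit constant. Infiniteness will follow from the observation that at density $d < 1/2$ the number of relators $(2m-1)^{dl}$ is subexponentially smaller than the number of reduced words of length $l$, so generically there is a nontrivial element of $G$; non-elementarity follows from a standard refinement of the same counting.

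The heart of the proof is the following isoperimetric claim: with overwhelming probability, every reduced van Kampen diagram $D$ with $n$ faces satisfies $|\partial D| \geq (1-2d)\,l\,n$. Following Ollivier, I would fix an \emph{abstract} diagram, that is, a combinatorial planar $2$-complex built from $n$ oriented $l$-gon faces together with a specified pattern of internal edge identifications of total combined length $I$; then $|\partial D| = nl - 2I$, and the desired inequality fails for $D$ exactly when $I > dln$. A valid labelling by elements of $\mathcal{R}^d_{m,l}$ must respect $I$ letter-coincidence constraints, each contributing a probability factor of order $(2m-1)^{-1}$, while the choice of relator assigned to each face contributes $(2m-1)^{dl}$ options. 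Multiplying, the probability that a given abstract diagram admits such a labelling is at most $(2m-1)^{dln - I}$, and a union bound over abstract diagrams (at most exponentially many in $n$) and over $I > dln$ yields total failure probability tending to $0$ as $l \to \infty$.

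The principal obstacle is the combinatorial bookkeeping: naive enumeration of abstract diagrams overcounts badly, and obtaining the sharp density threshold $d = 1/2$ requires Ollivier's local-to-global reduction to \emph{minimal} diagrams with controlled face count, so that the probabilistic estimate on diagrams up to a fixed size implies the isoperimetric inequality for all diagrams. Once this inequality is in hand, the explicit hyperbolicity constant $\delta \leq 4l/(1-2d)$ follows from a direct geodesic-triangle filling estimate in the Cayley $2$-complex: a geodesic triangle of perimeter $P$ bounds a van Kampen diagram of at most $P/((1-2d)l)$ faces, each of diameter $\leq l$, and translating this filling data into triangle-thinness yields a constant on the order of $l/(1-2d)$, with the explicit factor $4$ emerging from the usual bigon-to-triangle comparison.
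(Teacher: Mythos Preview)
The paper does not prove this theorem: it is stated as a foundational background result and attributed directly to \cite[Sec.I.3.a.Thm.13]{ollivier-survey} and Gromov \cite[Sec.9.B]{Gromov-invariants}, with no argument supplied. Your outline is precisely the standard Gromov--Ollivier argument developed in those references (and indeed the paper states the linear isoperimetric inequality separately as the next theorem, again by citation), so there is no independent ``paper's proof'' to compare against; your plan and the cited sources agree.

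One small caution on your infiniteness sketch: the observation that $(2m-1)^{dl}$ is much smaller than the number of reduced words of length $l$ does not by itself produce a nontrivial element, since a handful of relators can still kill the whole group. The clean way, which is what Ollivier does, is to deduce infiniteness \emph{from} the isoperimetric inequality you have already established: any reduced word of length less than $(1-2d)l$ cannot bound a reduced van Kampen diagram with at least one face, hence is nontrivial in $G$. This also immediately gives torsion-freeness and non-elementarity without a separate ``standard refinement''.
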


One knows something a lot stronger.

\begin{theorem}(Generic linear isoperimetric inequality)\label{t : generic linear iso}
For all  $m\geq 2$, $0 < d < 1/2$ and $\epsilon > 0$, any reduced van-Kampen diagram $D$ for $G \sim \mathcal{G}^d_{m,l}$ satisfies

$$ \lvert \partial D \rvert \geq (1 - 2d - \epsilon) l \lvert D \rvert $$

\noindent
in $\mathcal{G}^d_{m,l}$ with overwhelming probability.
\end{theorem}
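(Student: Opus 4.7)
The result is the standard linear isoperimetric inequality of Ollivier, and the natural approach is a first-moment argument on the number of reduced van-Kampen diagrams of small relative boundary. By Markov's inequality it suffices to show that the expected number of reduced van-Kampen diagrams $D$ with $|\partial D| < (1-2d-\epsilon)\,l\,|D|$ for $G \sim \mathcal{G}^d_{m,l}$ tends to $0$ as $l\to\infty$. I would partition such diagrams by their face count $n = |D|$ and boundary length $B = |\partial D|$, noting that such a $D$ has exactly $(nl - B)/2$ internal edges.

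For each fixed pair $(n,B)$, the plan is to bound the expected number of bad diagrams as a product of three contributions. First, the number of \emph{abstract} planar diagram structures with $n$ 2-cells of perimeter $l$ and boundary length $B$ is bounded by $C_1^n$ for some constant $C_1 = C_1(m)$, using planarity and an Euler-characteristic counting argument in the standard way. Second, the number of ways to label the edges of such a structure by letters of $S\cup S^{-1}$ consistently with the identifications is at most $(2m)^{(nl+B)/2}$, since there are $(nl+B)/2$ distinct (unoriented) edges. Third, for each labeling, the probability (over the i.i.d.\ uniform choice of the $N = (2m-1)^{dl}$ relators) that the boundary cycle of each 2-cell spells a cyclic conjugate of some relator or its inverse is at most $2l\cdot N/(2m-1)^l = 2l\,(2m-1)^{(d-1)l}$ per cell, and by independence across the $n$ cells the joint probability is bounded by $(2l)^n (2m-1)^{(d-1)ln}$.

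Multiplying the three bounds gives an expectation of at most
$$ C_1^n (2m)^{(nl+B)/2} (2l)^n (2m-1)^{(d-1)ln} \;\leq\; (C_2 l)^n (2m-1)^{B/2 + (d - 1/2)ln} $$
for some $C_2 = C_2(m)$. Plugging in the hypothesis $B < (1-2d-\epsilon) l n$, the exponent of $(2m-1)$ is at most $-\tfrac{\epsilon}{2} l n$, so the bound becomes $\bigl(C_2\,l\,(2m-1)^{-\epsilon l/2}\bigr)^n$. For $l$ large the base is less than $1/2$, so summing geometrically over $n\geq 1$ and over the at most $nl+1$ possible values of $B$ (a polynomial factor absorbed by the exponential decay) makes the total expected count tend to $0$.

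The main obstacle is the combinatorial input: the $C_1^n$ bound on the number of abstract diagram structures with $n$ faces each of perimeter $l$, which is the standard but delicate step. The \emph{reducedness} hypothesis is essential throughout, as without it one may have adjacent 2-cells carrying mutually inverse relators, which breaks both the combinatorial count in step one and the approximate independence of the per-cell events in step three; this is also why the bound gives only the density threshold $1/2$ rather than anything stronger.
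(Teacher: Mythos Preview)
The paper does not prove this theorem; it is quoted from Ollivier's survey as a foundational input. However, the framework for the correct argument appears in Section~\ref{s : prob diagram rule out} of the paper (for the restricted-boundary generalisation), so one can compare your proposal against that.

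Your argument has a genuine gap in step three. You claim that, for a fixed edge labelling with face boundary words $w_1,\dots,w_n$, the events ``$w_i$ is a cyclic conjugate of some $r_j^{\pm 1}$'' are independent across $i$, giving a joint probability $\le (2lN/(2m-1)^l)^n$. This is false whenever two faces carry the same relator. If $w_1,\dots,w_n$ are all cyclic conjugates of one another, the intersection of those events is a single event of probability $\approx 2lN/(2m-1)^l$, which is \emph{larger} than your claimed bound $(2lN/(2m-1)^l)^n$. Reducedness does not save you here: it only forbids an adjacent cancelling pair, not repetition of a relator across many faces. Such repetitions are exactly what produces diagrams with small boundary, so this is not a corner case but the heart of the difficulty.

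The correct argument (Ollivier's, and the version in Lemma~\ref{l : inductive filling lemma} and Theorem~\ref{t : restricted abstract diagram rule out}) handles this by decorating each face with an index $1\le i\le n(X)$ recording which \emph{distinct} relator it bears, and then filling the relators one index at a time. The conditional probability that the $i$th relator fits, given the first $i-1$, is controlled by the total number of edges constrained by faces bearing $i$ (the quantity $E_i$), and an Abel summation over $i$ converts $\sum m_iE_i$ into $d_c(X)$ and then into the boundary/face inequality. This is where the factor $1-2d$ actually emerges; your multiplicative bound over individual faces never sees the interaction between faces sharing a relator and so cannot produce it correctly.

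A smaller issue: your count $C_1^n$ in step one is also off, since even for two $l$-gons the number of gluing patterns along a common arc grows with $l$; the correct bound is polynomial in $l$ of degree $O(n)$ (compare Lemma~\ref{l : counting the diagrams}). That factor would be absorbed by the exponential if step three were right, so it is not the main obstruction.
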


Small cancellation properties are generic at certain densities of the Gromov model \cite[Sec.I.2.Prop.10]{ollivier-survey}.
See \cite{Strebel-small-cancellation, McCammond-Wise-small-cancellation-fans-ladders} for information about the metric small cancellation property $C'(\cdot)$ and \cite{Bishop-Ferov-density-of-metric-small-cancellation-in-finitiely-presented-groups, Tsai-density-random-subsets-applications-to-group-theory} for a recent discussion in the context of density models.

\begin{theorem}(Generic small cancellation)\label{t : generic small cancelation}
For $\lambda >0$ we have

\begin{itemize}
    \item[(1)] for $d < \lambda/2$, $G \sim \mathcal{G}^{d}_{m,l}$ satisfies $C'(\lambda)$ in $  \mathcal{G}^{d}_{m,l}$ with overwhelming probability,
    \item[(2)] for $d > \lambda/2$, $G \sim \mathcal{G}^d_{m,l}$ does not satisfy $C'(\lambda)$ in $  \mathcal{G}^{d}_{m,l}$ with overwhelming probability.
\end{itemize}

\end{theorem}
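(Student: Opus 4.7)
The plan is the standard first- and second-moment calculation for uniform random cyclically reduced words, as in \cite{ollivier-survey}. Recall that $C'(\lambda)$ fails precisely when two distinct occurrences among the relators, their inverses, and their cyclic rotations share a common subword of length at least $\lceil \lambda l \rceil$. Set $k := \lceil \lambda l \rceil$ and $R := \lfloor (2m-1)^{dl} \rfloor$, so that the total number of length-$k$ windows (across relators, inverses, and cyclic rotations) is $N \asymp l R$. For two fixed windows in distinct relators, or at disjoint positions in the same relator, the equality probability is $\asymp (2m-1)^{-k}$ with only bounded boundary corrections from the reducedness constraint. Letting $X$ count ordered pairs of distinct windows whose length-$k$ subwords agree, one obtains
\[ \mathbb{E}[X] \asymp N^2 (2m-1)^{-k} \asymp l^2 (2m-1)^{(2d - \lambda)l}. \]

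For part (1), the hypothesis $d < \lambda/2$ gives $2d - \lambda < 0$, hence $\mathbb{E}[X] \to 0$ exponentially in $l$; Markov's inequality then yields $\mathbb{P}(X \geq 1) \to 0$, and $C'(\lambda)$ holds with overwhelming probability. For part (2), $d > \lambda/2$ yields $\mathbb{E}[X] \to \infty$ exponentially. To upgrade this to existence with overwhelming probability I apply the second-moment method and show $\mathrm{Var}(X) = o(\mathbb{E}[X]^2)$. Decomposing the variance by the configuration of the two colliding pairs, contributions from pairs lying in four distinct relators factor up to $\mathbb{E}[X]^2$, while pairs sharing relators or overlapping windows have strictly fewer free letters and contribute at most $\mathbb{E}[X]$ times a polynomial in $l$. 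Chebyshev's inequality then gives $\mathbb{P}(X = 0) \to 0$, so some piece of length at least $\lambda l$ exists with overwhelming probability.

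The main obstacle is the covariance bookkeeping in part (2): two overlapping length-$k$ windows are genuinely correlated, and the combinatorics of overlap patterns must be enumerated with care. The standard cure is to bound the joint collision probability by $(2m-1)^{-(\text{free positions})}$ for each overlap configuration and then sum over the $\mathrm{poly}(l)$ many configurations, the exponential gap coming from $2d - \lambda > 0$ absorbing the polynomial factor. The residual work --- a rigorous handling of cyclic rotations, inversion, and the bounded boundary-reducedness corrections that I have folded into the $\asymp$ constants --- is routine and is carried out in \cite[Sec.~I.2]{ollivier-survey}.
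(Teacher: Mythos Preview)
The paper does not give its own proof of this theorem: it is stated as a known background result and attributed to \cite[Sec.~I.2, Prop.~10]{ollivier-survey}. Your sketch is the standard first/second-moment argument that underlies that reference, and as such there is nothing to compare it against in the paper itself. The first-moment part (1) is clean and correct as written. For part (2), the second-moment route you outline is the usual one; your identification of the main obstacle (covariance bookkeeping for overlapping windows and shared relators) is accurate, and the resolution you describe---bounding joint collision probabilities by $(2m-1)^{-(\text{free positions})}$ and summing over $\mathrm{poly}(l)$ configurations---is exactly what is done in the literature. So your proposal is correct and matches the standard argument the paper is citing.
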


\subsection{Diagrams at Low-density Random Groups}

Our main tool in studying the geometry of groups $G = \langle S | \mathcal{R}^d_{m,l} \rangle \sim \mathcal{G}^{d}_{m,l}$ will be the study of van-Kampen diagrams in the Cayley $2$-complex $X = \text{Cay}^{(2)}(G, S, \mathcal{R}^{d}_{m,l} )$ of a finitely presented group and other diagrammatic notions. 
See Definition \ref{d : abstract-vkmp} and Section \ref{s : prob diagram rule out} at large.
We use van-Kampen diagrams extensively in Section \ref{s : build round trees all densities} when we look at the metric geometry of $X$, given by the natural word metric, to show the round trees are undistorted.

As we wish to go from low-density groups to high-density we require special attention to diagrams at low-density models.
We now collect the properties that we will use later.

\begin{lemma}($2$-cells along geodesic, \cite[Lem.8.4]{Mackay-conf-rand-16})\label{l : cells intersecting along geodesics}
For $m \geq 2$, $0 < d < 1/4$ with overwhelming probability in $\mathcal{G}^{d}_{m,l}$ we have, for any geodesic $\gamma$ in the Cayley graph $X^{(1)}$, for every $2$-cell $R \subset X $, $R \cap \gamma$ is connected. 
\end{lemma}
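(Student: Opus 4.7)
The plan is to argue by contradiction via the generic linear isoperimetric inequality (Theorem on generic linear isoperimetric inequality) and the generic small cancellation condition $C'(1/2)$ (Theorem on generic small cancellation). Assume $\gamma\cap R$ is disconnected. Then I can pick two consecutive components of $\gamma\cap R$ along $\gamma$ and let $p,q \in \partial R\cap \gamma$ be respectively the last vertex of the first component and the first vertex of the second. The subpath $\alpha=\gamma[p,q]$ meets $R$ only at its endpoints. The two arcs of the cycle $\partial R$ joining $p$ to $q$ are denoted $\beta_1,\beta_2$ with $|\beta_1|\le|\beta_2|$, so $|\beta_1|\le l/2$; since $\gamma$ is a geodesic, $|\alpha|\le|\beta_1|\le l/2$.

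Next I would consider the null-homotopic loop $\alpha\cdot\beta_1^{-1}$ of length at most $l$ and a reduced van Kampen diagram $D$ it bounds. Applying the generic linear isoperimetric inequality with $\epsilon$ chosen so that $1-2d-\epsilon>1/2$ (possible because $d<1/4$), I get
\[
l \;\ge\; |\partial D| \;\ge\; (1-2d-\epsilon)\,l\,|D|,
\]
which forces $|D|\le 1$ with overwhelming probability.

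The two remaining cases are ruled out separately. If $|D|=0$, then $\alpha=\beta_1$ as edge paths, so $\alpha\subseteq\partial R\subseteq R$, contradicting that $\alpha$ meets $R$ only at its endpoints. If $|D|=1$, then $D$ consists of a single $2$-cell $R'$ whose boundary cycle has length $l$, forcing $|\alpha|+|\beta_1|=l$ and hence $|\alpha|=|\beta_1|=|\beta_2|=l/2$. If $R'$ equals $R$ as a $2$-cell of $X$, then $\alpha$ is the complementary arc $\beta_2$ of $\partial R$, again giving $\alpha\subseteq R$, contradiction. If $R'\ne R$, then $R$ and $R'$ are two distinct $2$-cells of $X$ sharing the arc $\beta_1$ of length $l/2$, so their boundary labels share a piece of length $l/2$; this violates $C'(1/2)$, which holds with overwhelming probability for $d<1/4$ by the generic small cancellation theorem applied with $\lambda=1/2$.

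I expect the main subtlety to be the $|D|=1$ case: keeping track of whether the single $2$-cell produced by the diagram is the same as $R$ or a distinct one, and in the latter case cleanly extracting a piece that literally violates $C'(1/2)$ rather than only $C'(1/2+\text{small})$. A secondary small point is ensuring that the linear isoperimetric inequality is invoked for a reduced diagram; this is fine because any null-homotopic loop bounds a reduced van Kampen diagram, to which the generic estimate applies with overwhelming probability.
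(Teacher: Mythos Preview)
The paper does not give a proof; the lemma is quoted from \cite[Lem.~8.4]{Mackay-conf-rand-16}. Your argument is correct and is the natural one. Two small points you flag deserve one line each. In the $|D|=1$ case, the equality $|\alpha|+|\beta_1|=l$ requires the diagram to have no spurs: this holds because $\alpha\cdot\beta_1^{-1}$ is cyclically reduced, since any cancellation at $p$ or $q$ would force the first or last edge of $\alpha$ to lie on $\partial R$, contradicting $\alpha\cap R=\{p,q\}$. Your worry about only obtaining ``$C'(1/2+\text{small})$'' is unfounded: a piece of length exactly $l/2$ already violates the strict inequality defining $C'(1/2)$, and in any case for $d<1/4$ the generic small cancellation theorem gives $C'(\lambda)$ for any $2d<\lambda<1/2$, which a shared arc of length $l/2$ certainly violates. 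You should also note in passing that $\partial R$ is an embedded cycle in $X^{(1)}$ (so that $p,q$ genuinely split it into exactly two arcs $\beta_1,\beta_2$); this follows from the same isoperimetric bound applied to a proper nonempty subword of a relator.

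A marginally cleaner variant avoids invoking $C'(\lambda)$ separately: once $|D|=1$, glue $R$ to $D$ along $\beta_1$ to obtain a two-face diagram with boundary $\alpha\cdot\beta_2^{-1}$ of total length at most $l$. If this diagram is reduced, the isoperimetric inequality gives $l\geq(1-2d-\epsilon)l\cdot 2>l$, a contradiction; if it is not reduced, the unique cell of $D$ and $R$ form a cancelling pair, which forces their images in $X$ to coincide and hence $\alpha\subset\partial R$.
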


There are certain diagrams that are ubiquitous at low-density.

\begin{definition}(Ladders, \cite[Def.8.6]{Mackay-conf-rand-16})\label{d : ladder definition}
A connected disk diagram $D$ is a \emph{ladder}, from $\beta_1 \subset \partial D$ to $\beta_2 \subset \partial D$, if $D$ is a union of a sequence of cells $R_1, R_2, \ldots, R_k$ for some $k \in \mathbb{N}_{\geq 1}$, where each $R_i$ is a closed $1$-cell or $2$-cell, and $R_i \cap R_j = \emptyset$ for $\lvert i - j \rvert > 1$.
Moreover, $\beta_1$ and $\beta_2$ are closed paths in $R_1 \setminus R_2$ and $R_k \setminus R_{k-1}$ respectively.
\end{definition}

In particular, for $0 < d < 1/6$ any reduced diagram for a geodesic bigon is a ladder.

\begin{lemma}(Finding ladders from bigons, \cite[Lem.8.7]{Mackay-conf-rand-16})\label{l : finding a ladder}
For $m \geq 2$, $0 < d < 1/6$ with overwhelming probability  in $\mathcal{G}^d_{m,l}$ we have for any reduced diagram $D \rightarrow X$ in $G \sim \mathcal{G}^d_{m,l}$ the following:
\begin{itemize}
    \item[(1)] Suppose $\partial D$ consists of, in order, a geodesic $\gamma_1$, a path $\beta_1 \subset R \subset D$, a geodesic $\gamma_2$, and a path $\beta_2 \subset R' \subset D$.
    Also suppose, that $\partial R$ and $\partial R'$ each contain edges from both $\gamma_1$ and $\gamma_2$.
    Then, $D$ is a ladder from $\beta_1$ to $\beta_2$ with $R = R_1$ and $R' = R_k$ for some $k \in \mathbb{N}_{k \geq 1}$.
    \item[(2)] Suppose $\partial D$ consists of a geodesic $\gamma_1$, a path $\beta_1 \subset R \subset D$, a geodesic $\gamma_2$, with $\gamma_1$, $\gamma_2$ sharing an endpoint $\beta_2$. 
    Also suppose, that $\beta_1 = D \cap R$ for some $2$-cell $R \subset X$ with $D \cup R \rightarrow X$ is also a reduced diagram.
    Then, $D$ is a ladder from $\beta_1$ to $\beta_2$.
    \item[(3)] Suppose $\partial D$ is a geodesic bigon $\gamma_1 \cup \gamma_2$ with endpoints $\beta_1$ and $\beta_2$.
    Then, $D$ is a ladder from $\beta_1$ to $\beta_2$. 
\end{itemize}
\end{lemma}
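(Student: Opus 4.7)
The plan is to prove case (3) first by induction on the number of $2$-cells in $D$, and then to adapt the argument for (1) and (2) since those cases essentially pre-specify the end cells of the would-be ladder. Throughout I would assume that three generic properties simultaneously hold: the linear isoperimetric inequality of Theorem \ref{t : generic linear iso} (with some small $\epsilon > 0$), the connected-intersection property of Lemma \ref{l : cells intersecting along geodesics} (valid since $d < 1/6 < 1/4$), and the small cancellation condition $C'(1/3)$ from Theorem \ref{t : generic small cancelation} (applied with $\lambda = 1/3$, valid since $d < 1/6 = \lambda/2$). Each $2$-cell has boundary length $l$, every piece shared between two distinct $2$-cells has length $< l/3$, every intersection of a $2$-cell with $\gamma_i$ is a connected arc, and the total perimeter of $D$ is at least $(1 - 2d - \epsilon)l|D| > (2/3 - \epsilon)l|D|$.

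For case (3), the inductive claim is: a reduced geodesic bigon diagram $D$ bounded by $\gamma_1 \cup \gamma_2$ between vertices $\beta_1, \beta_2$ is a ladder. The base case $|D| \leq 1$ is immediate. For the step, I would identify an end $2$-cell $R_1$ whose closure contains $\beta_1$: its boundary decomposes into a connected arc on $\gamma_1$, a connected arc on $\gamma_2$ (by Lemma \ref{l : cells intersecting along geodesics}), and the remaining arcs shared with other cells of $D$. Since each shared arc has length $< l/3$ by $C'(1/3)$, and since the average contribution of a $2$-cell to $\partial D$ exceeds $(2/3 - \epsilon)l$ by the isoperimetric bound, a counting argument forces $R_1$ to share boundary with at most one other $2$-cell $R_2$. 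Removing $R_1$ from $D$ yields a subdiagram $D'$ whose boundary is $\gamma_1' \cup \gamma_2'$, where $\gamma_i'$ is $\gamma_i$ with the $\partial R_1$-arc replaced by the remaining part of $\partial R_1$; a separate check using the $C'(1/3)$ condition and the reducedness of $D$ shows these new pieces remain geodesic (otherwise we could shorten $\gamma_i$ via Dehn's algorithm and contradict its being a geodesic). The inductive hypothesis applied to $D'$ then gives the ladder structure, which extends across $R_1$.

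The primary obstacle is the linear arrangement step: showing that no $2$-cell has three or more neighbours in $D$, and that non-adjacent cells in the sequence $R_1, \ldots, R_k$ do not intersect. A naive counting using $C'(1/3)$ and the isoperimetric inequality controls total shared length but does not immediately preclude branching; one has to exploit that $\partial D$ has only two geodesic sides, combined with the connectedness of $\partial R \cap \gamma_i$, to argue that a branching cell would either force $\partial R \cap \gamma_i$ to be disconnected or would generate a sub-bigon violating the isoperimetric bound. Cases (1) and (2) follow the same scheme with $R$ (and $R'$ in case (1)) playing the role of the distinguished end cells, using the hypothesis that $R, R'$ each meet both geodesics to start the linear chain; in case (2) the apex $\beta_2$ is the degenerate end, handled by shrinking the induction to the base case once only one $2$-cell remains adjacent to it.
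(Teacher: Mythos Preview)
The paper does not supply its own proof of this lemma: it is quoted from \cite[Lem.~8.7]{Mackay-conf-rand-16} without argument, so there is nothing in the present paper to compare your proposal against.

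Evaluating your sketch on its own merits, the induction as you have organised it does not close. In case~(3), once you delete the end cell $R_1$ (the one containing $\beta_1$ and meeting both $\gamma_1$ and $\gamma_2$), the remaining diagram $D' = D \setminus R_1$ is \emph{not} a geodesic bigon: on the $\beta_1$-side its boundary is the internal arc $\partial R_1 \cap \partial R_2$, which is a path of positive length, not a vertex. So $D'$ lands in the configuration of case~(2) (or case~(1)), not case~(3), and your inductive hypothesis for~(3) alone is not available. Your attempted repair---replacing the $\gamma_i$-arc of $R_1$ by ``the remaining part of $\partial R_1$'' and asserting the result is still geodesic---does not make sense as stated (the replacement does not yield two paths with common endpoints), and in any event the Dehn-type shortening argument you invoke would only show the \emph{original} $\gamma_i$ is non-geodesic if more than $l/2$ of some relator lay along it, which is a different statement. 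The natural fix is to run the induction on case~(1), where stripping either end cell returns an instance of case~(1) with one fewer $2$-cell; cases~(2) and~(3) are then degenerate endpoints of that induction, not its starting point.

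Separately, the step you flag as ``the primary obstacle''---showing each cell has at most one neighbour on each side---is indeed where the content lies, and the global isoperimetric bound $|\partial D| > (2/3 - \epsilon)l|D|$ does not by itself localise to a single cell. One needs to combine the connectedness of $R \cap \gamma_i$ (Lemma~\ref{l : cells intersecting along geodesics}) with the fact that $|R \cap \gamma_i| \le l/2$ (geodesicity) and $C'(1/3)$ to see that a cell meeting both geodesics has at most $l/3$ of internal boundary, hence touches at most one neighbouring cell on the interior; this is the argument carried out in \cite{Mackay-conf-rand-16}.
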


\subsection{Conformal Dimension}\label{ss : conf dim}

We will be looking at the conformal dimension invariant for hyperbolic groups.

Conformal dimension was introduced by Pansu \cite{Pansu-conformal-dimension} in connection with the study of rank one symmetric spaces.
We follow \cite{Mackay-Tyson-survey-conformal-dimension} for the discussion in this section and defer the reader to that survey for the definition of Hausdorff dimension and further statements about the invariant.

Infinite hyperbolic groups $G$ have a canonical family of metrics on $\partial_\infty G$ called a conformal gauge.
This includes all visual metrics on the boundary $\partial_\infty G $ whose existence is stated by, for example, \cite[Thm.3.2.9]{Mackay-Tyson-survey-conformal-dimension}.

\begin{definition}(Conformal dimension)\label{d : conf dim def}
The \emph{conformal dimension} of a conformal gauge is the infimum of the Hausdorff dimensions of all metric spaces in the gauge.

\end{definition}

Conformal dimension for the Gromov boundary $\partial_{\infty}G$ of a finitely generated hyperbolic group $G$ is a quasi-isometric invariant, see for example \cite[Thm. 3.2.13]{Mackay-Tyson-survey-conformal-dimension}.
It now makes sense to talk of the conformal dimension $\text{Confdim}(\partial_\infty G)$ of a finitely generated hyperbolic group $G$.
Note Theorem \ref{t : hyperbolic generic} tells us for groups in the model $\mathcal{G}^d_{m,l}$ where $m \geq 2$ and $0 < d < 1/2$ we have the groups are infinite and hyperbolic with overwhelming probability as $l \to \infty$.
We will be looking at $\text{Confdim}( \partial_\infty G)$ for $G \sim \mathcal{G}^d_{m,l}$ in the Gromov density model.
In particular, we wish to investigate $\text{Confdim}(\partial_\infty G)$ as a function of the defining algebraic parameters $m,l, d$ of the Gromov density model $\mathcal{G}^d_{m,l}$.

\section{Round Trees}\label{s : round trees}

The conformal dimension of a metric space can be bounded from below by finding within the space a product of a Cantor set and an interval \cite{Mackay-conf-rand-16}.
In this section we construct `combinatorial round trees' in the Cayley complex of low-density random groups that are additionally `thin'.
These round trees produce a Cantor set and an interval within the boundary of a hyperbolic group which results in lower bounds on conformal dimension by Theorem \ref{t : undistorted round trees lower bound}.

\subsection{Geometry of Round Trees}\label{ss : round trees geometry}

To find lower bounds of $\text{Confdim}( \partial_\infty G)$ for a hyperbolic finitely presented group $G = \langle S | R \rangle$ we will build so called combinatorial round trees in the Cayley $2$-complex $\text{Cay}^{(2)}(G,S, R)$.
The Cayley $2$-complex can be seen as the universal cover of the presentation complex or the Cayley graph $\text{Cay}^{(1)}(G,S)$ with relation loops filled in by $2$-cells using natural attaching maps. This approach is the method of finding lower bounds used in \cite{Mackay-conf-rand-16} inspired by  \cite[Sec.7.C3]{Gromov-invariants}.

Recall that a \emph{star} $St(x,\mathcal{U})$ of $x \in X$, $X$ a set, with respect to a family of subsets $\mathcal{U}$ of $X$ is the union of all $U \in \mathcal{U}$ containing $x$.
If $A \subset X$, then $St(A, \mathcal{U}) := \cup_{x \in A} St(x, \mathcal{U})$.

\begin{definition} (Combinatorial round tree, \cite[Def.7.1]{Mackay-conf-rand-16})\label{d : round tree}
A $2$-complex is a \emph{combinatorial round tree} with vertical branching $V \in \mathbb{N}$ and horizontal branching at most $H \in \mathbb{N}$ if, setting $T = \{1,2, \ldots, V \}$, we can write

$$ A = \bigcup_{\mathbf{a} \in T^{\mathbb{N}}} A_{\mathbf{a}},  $$

where
\begin{itemize}
    \item[(1)] $A$ has a base point $1$, contained in the boundary of a unique (initial) $2$-cell $A_{\emptyset} \subset A$.
    \item[(2)] Each $A_{\mathbf{a}}$ is an infinite planar $2$-complex, homeomorphic to a half-plane whose boundary is the union of two rays $L_{\mathbf{a}}$ and $R_{\mathbf{a}}$ with $L_{\mathbf{a}} \cap R_{\mathbf{a}} = \{1\}$.
    \item[(3)] Setting $A_{\emptyset} = A_0$, and for $n > 0$, let $A_n = A_{n-1} \cup St(A_{n-1}, \mathcal{U})$ where $\mathcal{U}$ is the collection of $2$-cells in $A$.
    Given $\mathbf{a} = (a_1, a_2, \ldots) \in T^{\mathbb{N}}$, let $\mathbf{a}_n = \mathbf{a} |_{[1, n]} \in T^n$.
    If $\mathbf{a}, \mathbf{b} \in T^{\mathbb{N}}$ with $\mathbf{a}_n = \mathbf{b}_n$, $\mathbf{a}_{n+1} \neq \mathbf{b}_{n+1}$, then 
    $$A_n \cap A_{\mathbf{a}} \subset A_{\mathbf{a}}\cap A_{\mathbf{b}} \subset A_{n+1} \cap A_{\mathbf{a}}. $$
    We require that each $2$-cell $R \subset A_n$ meets at most $V \cdot H$ $2$-cells in $A_{n+1} \setminus A_{n}$.
\end{itemize}

To emphasise the branching we will sometimes denote a combinatorial round tree as $A(V,H)$.
We will equip the $1$-skeleton $A^{(1)}$ with the unit combinatorial path metric to see it as a metric space.
\end{definition}

See Figure \ref{f : round tree} for a cartoon image of a combinatorial round tree.

For each $\mathbf{a}_n \in T^{n}$ we set $A_{\mathbf{a}_n}  := A_{\mathbf{a}} \cap A_n$ and have naturally defined combinatorial inclusion maps

$$ A_{\mathbf{a}_n} \rightarrow A_n \rightarrow A $$

\noindent
for a round tree $A$.
Taking $\mathbf{a} \in T^{\mathbb{N}}$ we restrict $L_{\mathbf{a}}$ and $R_{\mathbf{a}}$ to $A_{\mathbf{a}_n}$ defining $L_{\mathbf{a}_n}$ and $R_{\mathbf{a}_n}$.
This lets us partition the boundary of $A_{\mathbf{a}_n} \rightarrow \mathbb{R}^2$, considered as a planar complex, as $\partial A_{\mathbf{a_n}} = E_{\mathbf{a_n}} \cup L_{\mathbf{a_n}} \cup R_{\mathbf{a_n}}$  where $E_{\mathbf{a_n}} = \partial A_{\mathbf{a_n}} \setminus L_{\mathbf{a_n}} \cup R_{\mathbf{a_n}}$ is called the \emph{outer boundary} of $A_{\mathbf{a_n}}$.
We similarly define the outer boundary of $A_n$, denoted $E_n$, as the union of $E_{\mathbf{a}_n}$ over possible $\mathbf{a}_n$ coming from the round tree structure.

\begin{definition}(Extension path)\label{d : local v branching}
Consider a combinatorial round tree $A = A(V,H)$, $n, k \in \mathbb{N}_{\geq 0}$ and a vertex $v \in E_n^{(0)}$ of the boundary of $A_n$.
Embedded paths $P \rightarrow A_{n+1}^{(1)}$ of length $k$ with $P(0) = v$ and $P \setminus \{v \} \subset A \setminus \overline{A_{n}}$ are called $(n,k)$-\emph{extension paths} based at $v$.
\end{definition}

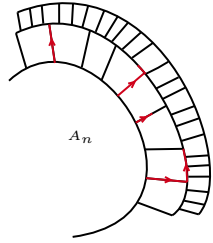
\begin{figure}[!ht]
    \centering

\tikzset{every picture/.style={line width=0.75pt}} 

\begin{tikzpicture}[x=0.75pt,y=0.75pt,yscale=-1,xscale=1]

\draw    (284.11,147.83) .. controls (328.8,105.8) and (396.8,215.8) .. (316.2,226.65) ;
\draw    (287.6,123.8) -- (292.8,141.4) ;
\draw    (304.4,119.4) -- (307.2,138.2) ;
\draw    (325,122.3) -- (320.8,141) ;
\draw    (338.2,129.7) -- (330.8,147.5) ;
\draw [color={rgb, 255:red, 208; green, 2; blue, 27 }  ,draw opacity=1 ]   (352.4,143.94) -- (338.8,155.54) ;
\draw    (363.4,160.5) -- (348,169) ;
\draw    (352.4,182.5) -- (372,182.2) ;
\draw    (353.2,197) -- (374,198.9) ;
\draw    (349.2,209.4) -- (366.4,215.8) ;
\draw    (287.6,123.8) .. controls (345.6,95.8) and (393.2,197.8) .. (366.4,215.8) ;
\draw    (288.4,113.3) -- (290,122.4) ;
\draw    (294.62,111.46) -- (295.8,120.2) ;
\draw    (309.38,108.85) -- (309.4,119) ;
\draw    (316.4,108.9) -- (315.8,119.8) ;
\draw    (324,110.1) -- (321,121.2) ;
\draw    (300.62,109.77) -- (301.4,119) ;
\draw    (377.85,160.54) -- (365.72,165.03) ;
\draw    (380.2,166.7) -- (368.4,170.8) ;
\draw    (370.8,178.3) -- (382.8,174.7) ;
\draw    (372.8,186.3) -- (384.8,185.5) ;
\draw    (288.4,113.3) .. controls (360.6,85.7) and (401.2,189.9) .. (379.8,215.3) ;
\draw    (369.8,212.5) -- (379.8,215.3) ;
\draw    (372.91,151.36) -- (361.64,157) ;
\draw    (369.64,146.09) -- (358.36,151.73) ;
\draw    (364.45,138.26) -- (359.33,141.27) -- (353,144.44) ;
\draw    (356.77,128.69) -- (346.18,136.27) ;
\draw    (350.62,123.46) -- (341.3,131.91) ;
\draw    (344,118.69) -- (336.07,128.01) ;
\draw    (339.08,115.15) -- (332.2,126.31) ;
\draw    (332.15,112.38) -- (327.82,123.18) ;
\draw    (385.09,190.45) -- (373.53,191.14) ;
\draw    (385.54,196.23) -- (374.55,196.27) ;
\draw    (384.77,204.08) -- (373.82,203) ;
\draw    (382.46,210.38) -- (372.43,208.38) ;
\draw [color={rgb, 255:red, 208; green, 2; blue, 27 }  ,draw opacity=1 ]   (349,140.06) .. controls (352.13,143.56) and (351.17,142.14) .. (353,144.44) ;
\draw [color={rgb, 255:red, 208; green, 2; blue, 27 }  ,draw opacity=1 ]   (348,169) -- (356.43,164.36) ;
\draw [color={rgb, 255:red, 208; green, 2; blue, 27 }  ,draw opacity=1 ]   (304.4,119.4) -- (307.22,138.91) ;
\draw  [color={rgb, 255:red, 208; green, 2; blue, 27 }  ,draw opacity=1 ][fill={rgb, 255:red, 208; green, 2; blue, 27 }  ,fill opacity=0 ] (304.42,130.23) -- (305.67,127.16) -- (307.23,130.08) ;
\draw  [color={rgb, 255:red, 208; green, 2; blue, 27 }  ,draw opacity=1 ][fill={rgb, 255:red, 208; green, 2; blue, 27 }  ,fill opacity=0 ] (344.2,149.26) -- (347.41,148.46) -- (345.98,151.44) ;
\draw  [color={rgb, 255:red, 208; green, 2; blue, 27 }  ,draw opacity=1 ][fill={rgb, 255:red, 208; green, 2; blue, 27 }  ,fill opacity=0 ] (349.93,166.24) -- (353.24,166.29) -- (351.09,168.81) ;
\draw [color={rgb, 255:red, 208; green, 2; blue, 27 }  ,draw opacity=1 ]   (374,198.9) -- (353.2,197) ;
\draw [color={rgb, 255:red, 208; green, 2; blue, 27 }  ,draw opacity=1 ]   (372,182.2) -- (373.44,189.44) -- (374,198.9) ;
\draw  [color={rgb, 255:red, 208; green, 2; blue, 27 }  ,draw opacity=1 ][fill={rgb, 255:red, 208; green, 2; blue, 27 }  ,fill opacity=0 ] (362.78,196.38) -- (365.69,197.97) -- (362.61,199.19) ;
\draw  [color={rgb, 255:red, 208; green, 2; blue, 27 }  ,draw opacity=1 ][fill={rgb, 255:red, 208; green, 2; blue, 27 }  ,fill opacity=0 ] (372.2,193.56) -- (373.45,190.49) -- (375.01,193.41) ;

\draw (312.11,171.84) node [anchor=north west][inner sep=0.75pt]  [font=\tiny]  {$A_{n}$};

\end{tikzpicture}

    \caption{Examples of extension paths.}
    \label{f : extension paths}
\end{figure}

\begin{definition}(Bracket)\label{d : bracket}
Consider a combinatorial round tree $A = A(V,H)$, $n,k \in  \mathbb{N}_{\geq 0}$ and
a $2$-cell $C \rightarrow A_{n+1} \setminus A_{n}$ necessarily with $\partial C \cap \partial A_{n} \neq \emptyset$. 

Denote the endpoints of $\partial C \cap \partial A_{n}$ as $p_1$, $p_2$ and let $v_1, v_2 \in \partial C \setminus \partial A_{n}$ be distance $k$ from $p_i$ in the path metric of $\partial C$.
A \emph{bracket path} $B(C)$ is the concatenation of the subpaths of $\partial C$ joining $v_1$ to $p_1$ to $p_2$ to $v_2$ or empty if no such $v_1, v_2$ exist. 
We call the collection of $B(C)$ as $C$ varies the set of $(n,k)$-brackets of $A$. 
\end{definition}

From this point forward all our brackets $B(C)$ will be non-empty.
Notice that a $(n,k)$-bracket contains two $(n,k)$-extension paths as subpaths.

We will say a combinatorial round tree $A = A(V,H)$ is \emph{built} in a finitely presented group $G = \langle S \rangle$ if we can find a topological immersion $\varphi : A \rightarrow \text{Cay}^2(G,S)$.
We say $A$ is \emph{partially built} if there exists $n$ with a topological immersion $A_n \rightarrow \text{Cay}^2(G,S)$.

\begin{definition}(Extension words)\label{d : local words}
Let $\mathbf{X}_k$ be a collection of reduced words in $S$ of length $k$.
$\mathbf{X}_k$ is the \emph{extension words} for a built round tree $A \rightarrow \text{Cay}^2(\langle S \rangle, S)$ if the set of labels of the images of $(n,k)$-extension paths is contained in $\mathbf{X}_k$ independent of $n$.
We will let $\mathbf{X}_k(v)$ be the subset of labels based at $v \in A^{(0)}$.
\end{definition}

The following theorem gives lower bounds of $\text{Confdim}(\partial_\infty G)$ by building certain combinatorial round trees for hyperbolic groups $G$.

\begin{theorem}(Lower bound by round trees, \cite[Thm.7.2]{Mackay-conf-rand-16})\label{t : undistorted round trees lower bound}
Let $A(V,H)$ be a combinatorial round tree with $V, H\geq 2$ built in a hyperbolic finitely generated group $G$.
If $\varphi : A \rightarrow \text{Cay}^2(G,S)$ restricts to a quasi-isometric embedding on $A^{(1)}$ then,

$$ \text{Confdim}( \partial_{\infty} G) \geq 1 + \frac{\log(V)}{\log(H)}.$$
\end{theorem}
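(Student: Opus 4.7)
The plan is to upgrade the quasi-isometric embedding $\varphi|_{A^{(1)}}$ to a quasi-symmetric embedding of Gromov boundaries $\partial_\infty A \hookrightarrow \partial_\infty G$ and then exhibit inside $\partial_\infty A$ a product structure whose conformal dimension can be estimated from below. Because $A$ is a union of combinatorial half-planes glued along rays in a tree-like (hence Gromov hyperbolic) way, the $1$-skeleton $A^{(1)}$ is Gromov hyperbolic, so the restriction of $\varphi$ induces an embedding on Gromov boundaries that is quasi-symmetric with respect to any pair of visual metrics. Conformal dimension does not decrease under passage to a subset lying in the same conformal gauge, so it suffices to prove
$$ \text{Confdim}(\partial_\infty A) \geq 1 + \frac{\log V}{\log H}. $$

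Next I would identify the natural product structure on $\partial_\infty A$. For each $\mathbf{a} \in T^{\mathbb{N}}$, the half-plane $A_{\mathbf{a}}$ contributes an arc $I_{\mathbf{a}} \subset \partial_\infty A$ whose endpoints are the ideal endpoints of the bounding rays $L_{\mathbf{a}}$ and $R_{\mathbf{a}}$. The intersection conditions of Definition \ref{d : round tree}(3) show that for indices $\mathbf{a}, \mathbf{b}$ agreeing on a prefix of length $n$, the arcs $I_{\mathbf{a}}$ and $I_{\mathbf{b}}$ can only meet at ideal points reached through $A_n$, so the family $\{I_{\mathbf{a}}\}$ assembles into a topological quotient of $T^{\mathbb{N}} \times [0,1]$ with identifications only along $T^{\mathbb{N}} \times \{0,1\}$.

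The third step is a Hausdorff dimension estimate on the Cantor factor $C = T^{\mathbb{N}}$ in the induced visual metric on $\partial_\infty A$. Vertical branching yields $V^n$ distinct arcs at depth $n$, while the horizontal branching bound (each $2$-cell in $A_n$ meets at most $V \cdot H$ cells in $A_{n+1} \setminus A_n$) forces the visual diameter of each cylinder $I_{\mathbf{a}_n}$ to shrink by a factor of at least $H$ per level. A mass distribution argument applied to the uniform measure on $T^{\mathbb{N}}$ then gives $\dim_H(C) \geq \log V / \log H$. Finally, I would invoke Tyson's lower bound for conformal dimension of metric spaces quasi-symmetrically containing a fat product: a subset quasi-symmetric to $C \times [0,1]$, with $C$ Ahlfors regular of dimension $s$, forces conformal dimension at least $1 + s$. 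Applied to the image of $\partial_\infty A$ in $\partial_\infty G$ this gives the desired bound.

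The main obstacle is the third step: translating the combinatorial bound on $2$-cell neighbours across shells into a uniform visual-metric shrinking estimate. One must exploit $\delta$-hyperbolicity of $G$ so that the visual metric decays exponentially with the Gromov product, and then track how the extension paths and brackets of Definitions \ref{d : local v branching} and \ref{d : bracket} bound from below the Gromov products between boundary points of distinct $A_{\mathbf{a}}$. The quasi-isometry constants of $\varphi$ must enter uniformly in $n$ for the limiting estimate to survive, which is precisely where the global bound $V \cdot H$ on local cell branching is essential; without uniformity one only gets per-shell dimension estimates that telescope to nothing in the limit.
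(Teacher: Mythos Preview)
The paper does not prove this theorem. It is stated with the citation \cite[Thm.7.2]{Mackay-conf-rand-16} and used as a black box; no argument appears anywhere in the present paper. There is therefore nothing here to compare your proposal against.

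On its own merits your outline has the right architecture---pass to boundaries, extract the arc family $\{I_{\mathbf a}\}_{\mathbf a\in T^{\mathbb N}}$, and feed the resulting product into a Pansu/Tyson-type lower bound---and this is indeed the shape of Mackay's original proof. The weak point is your third step. The horizontal branching bound does \emph{not} say that the visual diameter of a cylinder $[\mathbf a_n]$ shrinks by a factor $H$ per level: the visual diameter is governed by the radial distance $r_n$ from $1$ to the outer boundary $E_n$, and $r_n$ grows essentially linearly in $n$ regardless of $H$. What $H$ actually controls is the number of $2$-cells in each shell of a fixed half-plane $A_{\mathbf a}$, hence the combinatorial length of the arc $I_{\mathbf a}$ relative to the Cantor separation at the same scale. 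The ratio $\log V/\log H$ emerges not from a one-parameter shrinkage rate but from comparing the $V^n$ arcs against the $\asymp H^n$ pieces each arc traverses in a cover of $\partial_\infty A$ by $\asymp (VH)^n$ sets of comparable visual diameter; this is a combinatorial modulus computation (or equivalently an Ahlfors-regularity estimate on the product), not a straight mass-distribution bound on the Cantor factor alone. Your sketch would need to be reworked at that point to recover the stated exponent.
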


We will call a combinatorial round tree with the quasi-isometric embedding criteria in the hypothesis of the above theorem \emph{undistorted}.

\subsection{Round Trees at Low Density}\label{ss : low density undistorted round trees}

We turn our attention to the generic infinite hyperbolic groups $G \sim \mathcal{G}^d_{m,l}$.

In \cite{Mackay-conf-rand-16} undistorted round trees were built in $G \sim \mathcal{G}^d_{m,l}$ for $0 < d < 1/8$ with overwhelming probability in $\mathcal{G}^d_{m,l}$ as $l \to \infty$.
The proof technique heavily relies on metric small cancellation properties of low-density Gromov density random groups to control the geometry of van-Kampen diagrams \cite[Prop. 8.3, Lem. 8.4, Thm. 8.5, Lem. 8.7]{Mackay-conf-rand-16}.
These small cancellation properties are no longer generic at higher density as Theorem \ref{t : generic small cancelation} indicates.

The existence of these round trees is important for our work (building trees at higher density) as we show under restrictions on the number of emanating words (Definition \ref{d : emanating words}) the round trees stay built after further quotienting, and more importantly stay undistorted.

This is inspired by the construction of $\pi_1$-injective surfaces seen in \cite{Calegari-Waker-surfsub-rand-15} with a restriction on the number of `emanating' words coming from a trivalent fatgraph structure.
In \cite{Calegari-Waker-surfsub-rand-15} quasi-convex surfaces were built in $\langle S | r \rangle $ where $r$ is a randomly chosen cyclically reduced word of length $l$ with probability going to $1$ as $l \to \infty$. One can think of $\mathcal{G}^0_{m,l}$ as this model of random groups.
These surfaces were then shown to stay quasi-convex under further quotienting to get surface subgroups in $\mathcal{G}^d_{m,l}$ generically for all $0 < d < 1/2$.

We adapt their methods to allow for an undistorted subcomplex in a strictly positive density Gromov model random group to stay undistorted under further quotients.
To get conformal dimension lower bounds we are interested in our subcomplexes being combinatorial round trees as Theorem \ref{t : undistorted round trees lower bound} requires.

To get our lower bound we find round trees $A(V,H)$ built undistorted at an arbitrarily small density with restricted emanating paths to later show it is built undistorted at a higher density.
See Theorem \ref{t : round trees everywhere}.

The following theorem is adapted from \cite[Sec. 8.3]{Mackay-conf-rand-16} to our setup.
For simplification we state the construction for $0 < d < 1/18$.
What is new here is the introduction of additional parameters that restrict the number of extension words (Definition \ref{d : local words}) giving a `thinner' round tree than that seen in \cite{Mackay-conf-rand-16}.

\begin{theorem}(Low density round trees)\label{t : build low d round trees}
For all $m \geq 2$, $0 < d < 1/18$, $0 < \beta \leq 1$, $\eta = d/40$ and $H > 2/d$ there exists combinatorial round trees $A = A((2m-1)^{\beta \eta l}, H) $ built undistorted with overwhelming probability in $\mathcal{G}^d_{m,l}$.

Moreover, one can arrange for a set of extension words $\mathbf{X}_{ 5 + \eta l }$ to exist for $A$ satisfying $\lvert \mathbf{X}_{5 + \eta l} \rvert \leq 2m(2m-1)^4(2m-1)^{\beta \eta l} $ and if bracket paths $B(C)$, $B(C')$ in $A$ have the same label, $C$ and $C'$ have the same boundary label.

\begin{proof}
We follow the proof in \cite[Sec.8.3]{Mackay-conf-rand-16}.
We wish to build a round tree $A = \cup_{\mathbf{a} \in T^{\mathbb{N}}} A_{\mathbf{a}}$ with index set $T = \{1, 2, \ldots, (2m-1)^{\beta \eta l} \}$.
The round tree is built inductively with $A_0 = \{A_\emptyset\}$ being a $2$-cell $C$ with boundary word a relator $r \in \mathcal{R}^d_{m,l}$ with $1 \rightarrow (\partial C)^0 \rightarrow C \rightarrow \text{Cay}^2(G,S,\mathcal{R}^{d}_{m,l})$ combinatorial maps.
Assume we have built $A_n = \cup_{\mathbf{a}_n \in T^n} A_{\mathbf{a_n}}$ with $A_{\mathbf{a_n}}$ as in Definition \ref{d : round tree}.

Let $\mathbf{a}_n \in T^n$.
For some $N_{\mathbf{a}_n} \in \mathbb{N} $, split the outer boundary $E_{\mathbf{a_n}}$ into subpaths $\{ p_i \}_{1 \leq i \leq N_{\mathbf{a}_n}}$, of length $l/H$ for $i \in \{2, \dots, N_{\mathbf{a}_n} - 1\} $ and possibly smaller for $i = 1$ and $i =  N_{\mathbf{a}_n}$, so that the endpoints $p_i^-, p_i^+$ are not local minima of the function $d( 1,\cdot)|{E_{\mathbf{a}_n}}$.
As we allow $p_1$ and $p_{N_{a_\mathbf{n}}}$ to have length $\leq l/H$ by the remainder theorem such a collection of subpaths $\{ p_i \}$ always exists.
We ensure $\{ p_i \}$ partition $E_{\mathbf{a}_n}$ with $p_1^- \in L_{\mathbf{a}_n}$ and $p_{N_{\mathbf{a}_n}}^+ \in R_{\mathbf{a}_n}$.
\newline
\begin{figure}[!ht]
    \centering
    
    \label{f : partition}

\tikzset{every picture/.style={line width=0.75pt}} 

\begin{tikzpicture}[x=0.75pt,y=0.75pt,yscale=-1,xscale=1]

\draw   (386.75,125.13) .. controls (430.57,90.18) and (503.53,64.21) .. (549.72,67.13) .. controls (595.9,70.05) and (597.82,100.76) .. (554,135.71) .. controls (510.18,170.67) and (437.21,196.64) .. (391.03,193.72) .. controls (344.84,190.8) and (342.93,160.09) .. (386.75,125.13) -- cycle ;
\draw  [fill={rgb, 255:red, 0; green, 0; blue, 0 }  ,fill opacity=1 ] (356.8,180.94) .. controls (356.8,180.06) and (357.52,179.34) .. (358.4,179.34) .. controls (359.28,179.34) and (360,180.06) .. (360,180.94) .. controls (360,181.83) and (359.28,182.54) .. (358.4,182.54) .. controls (357.52,182.54) and (356.8,181.83) .. (356.8,180.94) -- cycle ;
\draw  [fill={rgb, 255:red, 0; green, 0; blue, 0 }  ,fill opacity=1 ] (468.6,81.4) .. controls (468.6,80.96) and (468.96,80.6) .. (469.4,80.6) .. controls (469.84,80.6) and (470.2,80.96) .. (470.2,81.4) .. controls (470.2,81.84) and (469.84,82.2) .. (469.4,82.2) .. controls (468.96,82.2) and (468.6,81.84) .. (468.6,81.4) -- cycle ;
\draw  [fill={rgb, 255:red, 0; green, 0; blue, 0 }  ,fill opacity=1 ] (540.6,144.4) .. controls (540.6,143.96) and (540.96,143.6) .. (541.4,143.6) .. controls (541.84,143.6) and (542.2,143.96) .. (542.2,144.4) .. controls (542.2,144.84) and (541.84,145.2) .. (541.4,145.2) .. controls (540.96,145.2) and (540.6,144.84) .. (540.6,144.4) -- cycle ;
\draw    (497.02,72.8) -- (489.73,86.52) ;
\draw    (523,68) -- (515.71,81.71) ;
\draw    (553,136.71) -- (538.29,136.71) ;
\draw    (570.71,120.57) -- (553.43,121.43) ;
\draw    (553.14,67.29) -- (541.05,78.73) -- (539.86,79.86) ;
\draw    (581.29,106) -- (564.57,107.14) ;
\draw    (582,79.43) -- (566.71,90) ;
\draw  [fill={rgb, 255:red, 0; green, 0; blue, 0 }  ,fill opacity=1 ] (480.1,77.27) .. controls (480.1,76.83) and (480.46,76.47) .. (480.9,76.47) .. controls (481.34,76.47) and (481.7,76.83) .. (481.7,77.27) .. controls (481.7,77.72) and (481.34,78.07) .. (480.9,78.07) .. controls (480.46,78.07) and (480.1,77.72) .. (480.1,77.27) -- cycle ;
\draw  [fill={rgb, 255:red, 0; green, 0; blue, 0 }  ,fill opacity=1 ] (490.1,74.4) .. controls (490.1,73.96) and (490.46,73.6) .. (490.9,73.6) .. controls (491.34,73.6) and (491.7,73.96) .. (491.7,74.4) .. controls (491.7,74.84) and (491.34,75.2) .. (490.9,75.2) .. controls (490.46,75.2) and (490.1,74.84) .. (490.1,74.4) -- cycle ;
\draw  [fill={rgb, 255:red, 0; green, 0; blue, 0 }  ,fill opacity=1 ] (502.6,71.4) .. controls (502.6,70.96) and (502.96,70.6) .. (503.4,70.6) .. controls (503.84,70.6) and (504.2,70.96) .. (504.2,71.4) .. controls (504.2,71.84) and (503.84,72.2) .. (503.4,72.2) .. controls (502.96,72.2) and (502.6,71.84) .. (502.6,71.4) -- cycle ;
\draw  [fill={rgb, 255:red, 0; green, 0; blue, 0 }  ,fill opacity=1 ] (514.1,69.02) .. controls (514.1,68.58) and (514.46,68.22) .. (514.9,68.22) .. controls (515.34,68.22) and (515.7,68.58) .. (515.7,69.02) .. controls (515.7,69.47) and (515.34,69.82) .. (514.9,69.82) .. controls (514.46,69.82) and (514.1,69.47) .. (514.1,69.02) -- cycle ;
\draw  [fill={rgb, 255:red, 0; green, 0; blue, 0 }  ,fill opacity=1 ] (529.48,67.15) .. controls (529.48,66.71) and (529.83,66.35) .. (530.28,66.35) .. controls (530.72,66.35) and (531.08,66.71) .. (531.08,67.15) .. controls (531.08,67.59) and (530.72,67.95) .. (530.28,67.95) .. controls (529.83,67.95) and (529.48,67.59) .. (529.48,67.15) -- cycle ;
\draw  [fill={rgb, 255:red, 0; green, 0; blue, 0 }  ,fill opacity=1 ] (543.23,66.65) .. controls (543.23,66.21) and (543.58,65.85) .. (544.03,65.85) .. controls (544.47,65.85) and (544.83,66.21) .. (544.83,66.65) .. controls (544.83,67.09) and (544.47,67.45) .. (544.03,67.45) .. controls (543.58,67.45) and (543.23,67.09) .. (543.23,66.65) -- cycle ;
\draw  [fill={rgb, 255:red, 0; green, 0; blue, 0 }  ,fill opacity=1 ] (560.48,68.9) .. controls (560.48,68.46) and (560.83,68.1) .. (561.28,68.1) .. controls (561.72,68.1) and (562.08,68.46) .. (562.08,68.9) .. controls (562.08,69.34) and (561.72,69.7) .. (561.28,69.7) .. controls (560.83,69.7) and (560.48,69.34) .. (560.48,68.9) -- cycle ;
\draw  [fill={rgb, 255:red, 0; green, 0; blue, 0 }  ,fill opacity=1 ] (572.85,72.77) .. controls (572.85,72.33) and (573.21,71.97) .. (573.65,71.97) .. controls (574.09,71.97) and (574.45,72.33) .. (574.45,72.77) .. controls (574.45,73.22) and (574.09,73.57) .. (573.65,73.57) .. controls (573.21,73.57) and (572.85,73.22) .. (572.85,72.77) -- cycle ;
\draw  [fill={rgb, 255:red, 0; green, 0; blue, 0 }  ,fill opacity=1 ] (583.6,85.15) .. controls (583.6,84.71) and (583.96,84.35) .. (584.4,84.35) .. controls (584.84,84.35) and (585.2,84.71) .. (585.2,85.15) .. controls (585.2,85.59) and (584.84,85.95) .. (584.4,85.95) .. controls (583.96,85.95) and (583.6,85.59) .. (583.6,85.15) -- cycle ;
\draw  [fill={rgb, 255:red, 0; green, 0; blue, 0 }  ,fill opacity=1 ] (583.85,97.65) .. controls (583.85,97.21) and (584.21,96.85) .. (584.65,96.85) .. controls (585.09,96.85) and (585.45,97.21) .. (585.45,97.65) .. controls (585.45,98.09) and (585.09,98.45) .. (584.65,98.45) .. controls (584.21,98.45) and (583.85,98.09) .. (583.85,97.65) -- cycle ;
\draw  [fill={rgb, 255:red, 0; green, 0; blue, 0 }  ,fill opacity=1 ] (577.23,111.28) .. controls (577.23,110.83) and (577.58,110.48) .. (578.03,110.48) .. controls (578.47,110.48) and (578.83,110.83) .. (578.83,111.28) .. controls (578.83,111.72) and (578.47,112.07) .. (578.03,112.07) .. controls (577.58,112.07) and (577.23,111.72) .. (577.23,111.28) -- cycle ;
\draw  [fill={rgb, 255:red, 0; green, 0; blue, 0 }  ,fill opacity=1 ] (572.85,116.53) .. controls (572.85,116.08) and (573.21,115.73) .. (573.65,115.73) .. controls (574.09,115.73) and (574.45,116.08) .. (574.45,116.53) .. controls (574.45,116.97) and (574.09,117.32) .. (573.65,117.32) .. controls (573.21,117.32) and (572.85,116.97) .. (572.85,116.53) -- cycle ;
\draw  [fill={rgb, 255:red, 0; green, 0; blue, 0 }  ,fill opacity=1 ] (564.35,125.28) .. controls (564.35,124.83) and (564.71,124.48) .. (565.15,124.48) .. controls (565.59,124.48) and (565.95,124.83) .. (565.95,125.28) .. controls (565.95,125.72) and (565.59,126.07) .. (565.15,126.07) .. controls (564.71,126.07) and (564.35,125.72) .. (564.35,125.28) -- cycle ;
\draw  [fill={rgb, 255:red, 0; green, 0; blue, 0 }  ,fill opacity=1 ] (557.98,131.15) .. controls (557.98,130.71) and (558.33,130.35) .. (558.78,130.35) .. controls (559.22,130.35) and (559.58,130.71) .. (559.58,131.15) .. controls (559.58,131.59) and (559.22,131.95) .. (558.78,131.95) .. controls (558.33,131.95) and (557.98,131.59) .. (557.98,131.15) -- cycle ;

\draw (380,102.69) node [anchor=north west][inner sep=0.75pt]  [font=\tiny]  {$L_{\boldsymbol{a}_{n}}$};
\draw (479.86,179.4) node [anchor=north west][inner sep=0.75pt]  [font=\tiny]  {$R_{\boldsymbol{a}_{n}}$};
\draw (580.29,63.97) node [anchor=north west][inner sep=0.75pt]  [font=\tiny]  {$E_{\boldsymbol{a}_{n}}$};
\draw (351.43,183.83) node [anchor=north west][inner sep=0.75pt]  [font=\tiny]  {$1$};
\draw (451.33,68) node [anchor=north west][inner sep=0.75pt]  [font=\tiny]  {$p_{1}^{-}$};
\draw (465,57.73) node [anchor=north west][inner sep=0.75pt]  [font=\tiny]  {$p_{1}^{+} =p_{2}^{-} \ $};
\draw (541.6,150.47) node [anchor=north west][inner sep=0.75pt]  [font=\tiny]  {$p_{N_{\boldsymbol{a}_{n}}}^{+}$};

\end{tikzpicture}

\caption{Partitioning the Boundary.}
\end{figure}
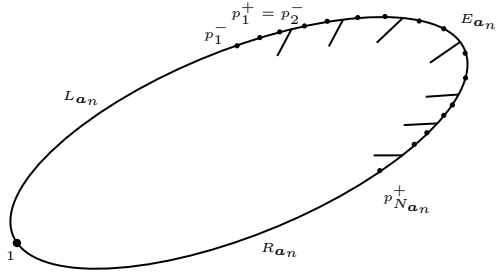

Let $u \in \{p_1^-, p_1^+ , p_2^+ \ldots, p_{N_{\mathbf{a}_n}}^+\}$, this is where we start extending.

Following directly \cite[Sec.8.3]{Mackay-conf-rand-16} by \cite[Prop.8.10]{Mackay-conf-rand-16} with $\lambda = 1/8$, as $ d < 1/16$, there exists a point $u' \in \text{Cay}^{2}(G,S, \mathcal{R}^d_{m,l}) \setminus A_n$ such $d(u,u') = 5$, $d(1, u') = d(1,u) + d(u,u')$ with the geodesic intersection $[u,u'] \cap E_{\mathbf{a_n}} = \{u\}$.
The Lemma \cite[Lem.8.11]{Mackay-conf-rand-16} with $\lambda = 1/8, N = 2 > \frac{\lambda}{\lambda - d}$ gives a word depending on $u$ of length $\lambda l$ which doesn't appear as a subword of any relator in $\mathcal{R}^{d}_{m,l}$ in more than one way.
Note that $'\eta'$ from \cite[Prop.8.10]{Mackay-conf-rand-16} may be taken to be $5$ since for \cite[Lem.8.11]{Mackay-conf-rand-16} $\lambda = 1/8, d < 1/18$ then $2^{'\eta' -2 } > 2 > \frac{\lambda}{\lambda - d}$ which contradicts the conclusion of \cite[Lem.8.11]{Mackay-conf-rand-16}.
We take the first subword of length $5$ giving the labelling of the path $[u,u']$ so the path label will only depend on the initial edge of $[u,1]$ implying no folding occurs.
We make this choice under the assumption $l > 5 \cdot 8$ is sufficiently large.
As there are at most $2m(2m-1)^4$ choice for this initial edge there are at most $2m(2m-1)^4$ words to label $[u,u']$ uniformly over $u$.

The Lemma \cite[Lem.8.8]{Mackay-conf-rand-16} with $\epsilon = \frac{1}{24}$, $'\eta' = \eta l < \frac{1}{16}l - 1 $ implies the existence of $2m-2 \geq 2$ ways to extend $u'$ 
so that all $V = (2m-1)^{\eta l}$ ways of extending (of length $\eta l$) exist and gives geodesics going away from $1$.

At this point we make a change from \cite[Sec.8.3]{Mackay-conf-rand-16} by restricting the possible extensions as follows.
Let $E_{n+1}(u) = E_{n+1}$ be a subset of the end points of these extensions of size $(2m-1)^{\beta \eta l}$ made uniform over $u$ so $\lvert \mathbf{X}_{5 + \eta l} \rvert \leq 2m(2m-1)^{\beta \eta l}  $. 
For each $a \in E_{n+1}(u)$ denote $\gamma_{u',a}$ as the extending path of length $\eta l$ starting at $u'$.
The set of concatenations of $[u,u'] \cdot \gamma_{u,a}$ as $u$ and $a$ vary in their domains determine the $(n,5 + \eta l)$-extension paths. 
Note $u'$ is uniquely determined by $u$ and hence there is one such labelling for the the path $[u,u']$ once $u$ is located.

\begin{figure}[!ht]

    \centering
    \label{f : A combinatorial round tree}

\tikzset{every picture/.style={line width=0.75pt}} 
\begin{tikzpicture}[x=1pt,y=1pt,yscale=-1,xscale=0.8]

\draw    (321.66,132.3) .. controls (333.72,132.25) and (339.62,134.72) .. (379.44,128.82) ;
\draw    (302.76,117.22) .. controls (316.59,130.22) and (328.19,135.27) .. (344.94,149.4) ;
\draw    (348.84,132.52) .. controls (356.97,138.27) and (359.33,139.4) .. (361.89,139.36) ;
\draw    (333.97,140.71) .. controls (342.09,146.45) and (351.05,144.37) .. (353.61,144.34) ;
\draw    (364.97,130.75) .. controls (363.99,131.6) and (367.18,133.48) .. (370.79,134.02) ;
\draw    (344.67,144.6) .. controls (343.68,145.45) and (345.55,146.63) .. (349.16,147.16) ;
\draw    (353.11,135.77) .. controls (345.29,137.62) and (353.57,141.51) .. (357.18,142.04) ;
\draw    (366.11,132.62) .. controls (362.9,134.7) and (364.89,135.91) .. (366.87,136.97) ;
\draw    (371.84,130.08) .. controls (370.86,130.93) and (371.39,131.1) .. (375,131.64) ;

\draw   (64.6,116.8) .. controls (56.98,103.56) and (109.27,83.2) .. (181.4,71.33) .. controls (253.53,59.45) and (318.18,60.56) .. (325.8,73.79) .. controls (333.42,87.03) and (281.13,107.39) .. (209,119.26) .. controls (136.87,131.14) and (72.22,130.04) .. (64.6,116.8) -- cycle ;
\draw    (286.73,49.9) -- (268.13,63.29) ;
\draw    (334.75,51.01) -- (305.87,65.32) ;
\draw    (367.71,66.79) -- (323.02,71.14) ;
\draw    (363.12,83.64) -- (321.86,83.86) ;
\draw    (357.28,100.7) -- (309.08,91.92) ;
\draw    (302.81,117.11) -- (286.69,101.31) ;
\draw    (380.35,103.53) .. controls (392.44,100.04) and (399.44,99.78) .. (436.82,84.91) ;
\draw    (354.72,100.21) .. controls (374.35,103.79) and (388.21,103.38) .. (411.26,106.77) ;
\draw    (407.74,95.85) .. controls (418.42,96.85) and (421.3,96.83) .. (423.85,96.08) ;
\draw    (396.41,104.88) .. controls (407.09,105.88) and (415.17,102.1) .. (417.73,101.35) ;
\draw    (423.15,90.19) .. controls (422.53,90.96) and (426.57,91.14) .. (430.43,90.41) ;
\draw    (408.87,104.07) .. controls (408.25,104.84) and (410.65,104.99) .. (414.51,104.26) ;
\draw    (413.45,96.51) .. controls (406.41,99.83) and (416.44,99.71) .. (420.3,98.98) ;
\draw    (425.11,90.95) .. controls (422.81,93.08) and (425.34,93.21) .. (427.79,93.26) ;
\draw    (429.75,87.82) .. controls (429.13,88.6) and (429.75,88.55) .. (433.61,87.82) ;

\draw    (241,51.28) -- (220.17,66.37) ;
\draw    (344.94,149.4) .. controls (381.53,200.49) and (532.02,155.67) .. (411.26,106.77) ;
\draw [color={rgb, 255:red, 0; green, 0; blue, 0 }  ,draw opacity=0.9 ]   (348.2,146.88) .. controls (402.56,182.7) and (549.29,138.21) .. (414.51,104.26) ;
\draw    (379.44,128.82) .. controls (426.92,110.89) and (490.31,58.05) .. (436.82,84.91) ;
\draw [color={rgb, 255:red, 0; green, 0; blue, 0 }  ,draw opacity=0.34 ] [dash pattern={on 0.84pt off 2.51pt}]  (359.94,141.02) .. controls (417.89,154.73) and (526.73,94.07) .. (421.9,97.73) ;
\draw    (363.12,83.64) -- (419.83,68.24) ;
\draw    (419.83,68.24) .. controls (478.22,43.36) and (488.95,62.63) .. (436.82,84.91) ;
\draw    (367.71,66.79) -- (386.68,58.02) ;
\draw    (334.75,51.01) -- (353.73,42.25) ;
\draw    (286.73,49.9) -- (299.73,39.57) ;
\draw    (386.68,58.02) .. controls (445.06,33.14) and (446.32,52.16) .. (419.83,68.24) ;
\draw    (353.73,42.25) .. controls (412.11,17.37) and (413.16,41.94) .. (386.68,58.02) ;
\draw    (299.73,39.57) .. controls (362.63,3.74) and (363.74,33.68) .. (337.26,49.76) ;
\draw    (258.03,40.28) .. controls (302.79,12.31) and (324.68,21.43) .. (299.73,39.57) ;
\draw    (375,131.64) .. controls (429.77,123.98) and (510.05,68.01) .. (433.61,87.82) ;
\draw    (241,51.28) -- (258.03,40.28) ;
\draw [color={rgb, 255:red, 0; green, 0; blue, 0 }  ,draw opacity=0.3 ]   (353.61,144.34) .. controls (422.24,173.8) and (544.82,112.01) .. (417.73,101.35) ;
\draw [color={rgb, 255:red, 0; green, 0; blue, 0 }  ,draw opacity=0.21 ]   (370.79,134.02) .. controls (439.02,134.76) and (521.73,70.63) .. (430.43,90.41) ;
\draw [color={rgb, 255:red, 0; green, 0; blue, 0 }  ,draw opacity=0.22 ]   (247.32,47.02) -- (275.95,44.39) ;
\draw [color={rgb, 255:red, 0; green, 0; blue, 0 }  ,draw opacity=0.24 ]   (289.82,47.42) -- (314.42,48.43) ;
\draw [color={rgb, 255:red, 0; green, 0; blue, 0 }  ,draw opacity=0.22 ]   (327.85,54.3) -- (356.03,55.31) ;
\draw [color={rgb, 255:red, 0; green, 0; blue, 0 }  ,draw opacity=0.22 ] [dash pattern={on 0.84pt off 2.51pt}]  (367.71,66.79) -- (393.65,69.01) ;
\draw [color={rgb, 255:red, 0; green, 0; blue, 0 }  ,draw opacity=0.21 ] [dash pattern={on 0.84pt off 2.51pt}]  (363.12,83.64) -- (386.9,87.26) ;
\draw [color={rgb, 255:red, 0; green, 0; blue, 0 }  ,draw opacity=0.21 ]   (275.95,44.39) .. controls (335.45,40.35) and (338.14,49.24) .. (314.42,48.43) ;
\draw [color={rgb, 255:red, 0; green, 0; blue, 0 }  ,draw opacity=0.16 ]   (314.42,48.43) .. controls (356.48,48.43) and (373.03,58.95) .. (356.03,55.31) ;
\draw [color={rgb, 255:red, 0; green, 0; blue, 0 }  ,draw opacity=0.25 ]   (356.03,55.31) .. controls (381.08,61.78) and (418.21,72.7) .. (393.65,69.01) ;
\draw [color={rgb, 255:red, 0; green, 0; blue, 0 }  ,draw opacity=0.28 ] [dash pattern={on 0.84pt off 2.51pt}]  (247.32,47.02) -- (262.08,42.37) ;
\draw [color={rgb, 255:red, 0; green, 0; blue, 0 }  ,draw opacity=0.28 ] [dash pattern={on 0.84pt off 2.51pt}]  (289.82,47.42) -- (304.58,42.77) ;

\draw [color={rgb, 255:red, 0; green, 0; blue, 0 }  ,draw opacity=0.28 ] [dash pattern={on 0.84pt off 2.51pt}]  (327.85,54.7) -- (342.61,51.67) ;

\draw  [fill={rgb, 255:red, 0; green, 0; blue, 0 }  ,fill opacity=1 ] (62.38,115.73) .. controls (64.13,115.35) and (66.32,115.31) .. (67.27,115.65) .. controls (68.23,115.99) and (67.59,116.57) .. (65.84,116.95) .. controls (64.1,117.33) and (61.91,117.36) .. (60.96,117.03) .. controls (60,116.69) and (60.64,116.11) .. (62.38,115.73) -- cycle ;
\draw  [fill={rgb, 255:red, 0; green, 0; blue, 0 }  ,fill opacity=1 ] (285.6,100.29) .. controls (286.08,100.11) and (286.73,100.21) .. (287.06,100.52) .. controls (287.4,100.83) and (287.29,101.23) .. (286.81,101.42) .. controls (286.34,101.6) and (285.69,101.49) .. (285.35,101.18) .. controls (285.02,100.87) and (285.13,100.47) .. (285.6,100.29) -- cycle ;
\draw  [fill={rgb, 255:red, 0; green, 0; blue, 0 }  ,fill opacity=1 ] (302.16,116.53) .. controls (302.63,116.35) and (303.29,116.45) .. (303.62,116.76) .. controls (303.96,117.07) and (303.84,117.48) .. (303.37,117.66) .. controls (302.9,117.84) and (302.24,117.74) .. (301.91,117.42) .. controls (301.58,117.11) and (301.69,116.71) .. (302.16,116.53) -- cycle ;
\draw  [fill={rgb, 255:red, 0; green, 0; blue, 0 }  ,fill opacity=1 ] (378.72,128.38) .. controls (379.14,128.11) and (379.8,128.09) .. (380.2,128.33) .. controls (380.6,128.57) and (380.59,128.98) .. (380.17,129.25) .. controls (379.75,129.52) and (379.09,129.54) .. (378.69,129.3) .. controls (378.29,129.07) and (378.3,128.65) .. (378.72,128.38) -- cycle ;
\draw  [fill={rgb, 255:red, 0; green, 0; blue, 0 }  ,fill opacity=1 ] (375.25,130.74) .. controls (375.73,130.56) and (376.38,130.66) .. (376.71,130.97) .. controls (377.05,131.29) and (376.94,131.69) .. (376.46,131.87) .. controls (375.99,132.05) and (375.34,131.95) .. (375,131.64) .. controls (374.67,131.32) and (374.78,130.92) .. (375.25,130.74) -- cycle ;
\draw  [fill={rgb, 255:red, 0; green, 0; blue, 0 }  ,fill opacity=1 ] (370.18,133.45) .. controls (370.66,133.27) and (371.31,133.37) .. (371.65,133.69) .. controls (371.98,134) and (371.87,134.4) .. (371.4,134.58) .. controls (370.92,134.76) and (370.27,134.66) .. (369.93,134.35) .. controls (369.6,134.04) and (369.71,133.64) .. (370.18,133.45) -- cycle ;
\draw  [fill={rgb, 255:red, 0; green, 0; blue, 0 }  ,fill opacity=1 ] (344.34,148.83) .. controls (344.81,148.65) and (345.47,148.75) .. (345.8,149.06) .. controls (346.13,149.38) and (346.02,149.78) .. (345.55,149.96) .. controls (345.08,150.14) and (344.42,150.04) .. (344.09,149.73) .. controls (343.75,149.41) and (343.87,149.01) .. (344.34,148.83) -- cycle ;
\draw  [fill={rgb, 255:red, 0; green, 0; blue, 0 }  ,fill opacity=1 ] (348.2,146.88) .. controls (348.67,146.7) and (349.32,146.81) .. (349.66,147.12) .. controls (349.99,147.43) and (349.88,147.83) .. (349.41,148.01) .. controls (348.93,148.19) and (348.28,148.09) .. (347.94,147.78) .. controls (347.61,147.47) and (347.72,147.07) .. (348.2,146.88) -- cycle ;
\draw  [fill={rgb, 255:red, 0; green, 0; blue, 0 }  ,fill opacity=1 ] (353,143.78) .. controls (353.47,143.6) and (354.13,143.7) .. (354.46,144.01) .. controls (354.8,144.32) and (354.68,144.72) .. (354.21,144.9) .. controls (353.74,145.09) and (353.08,144.98) .. (352.75,144.67) .. controls (352.42,144.36) and (352.53,143.96) .. (353,143.78) -- cycle ;
\draw  [fill={rgb, 255:red, 0; green, 0; blue, 0 }  ,fill opacity=1 ] (361.28,138.8) .. controls (361.76,138.62) and (362.41,138.72) .. (362.75,139.03) .. controls (363.08,139.34) and (362.97,139.75) .. (362.49,139.93) .. controls (362.02,140.11) and (361.37,140.01) .. (361.03,139.69) .. controls (360.7,139.38) and (360.81,138.98) .. (361.28,138.8) -- cycle ;

\draw (101.92,103.5) node [anchor=north west][inner sep=0.75pt]  [font=\tiny,rotate=-359.84]  {${\displaystyle \mathnormal{A_{\boldsymbol{a}_{n}}}}$};
\draw (50.98,109.86) node [anchor=north west][inner sep=0.75pt]  [font=\fontsize{0.29em}{0.35em}\selectfont]  {$1$};
\draw (278.07,104.86) node [anchor=north west][inner sep=0.75pt]  [font=\fontsize{0.29em}{0.35em}\selectfont]  {$u$};
\draw (294.12,120.63) node [anchor=north west][inner sep=0.75pt]  [font=\fontsize{0.29em}{0.35em}\selectfont]  {$u'$};
\draw (305.88,155.08) node [anchor=north west][inner sep=0.75pt]  [font=\tiny]  {$E_{n+1}( u)$};
\end{tikzpicture}
\caption{$A_{\mathbf{a}_{n+1}}$ from $A_{\mathbf{a}_n}$. }
\label{f : round tree}
\end{figure}
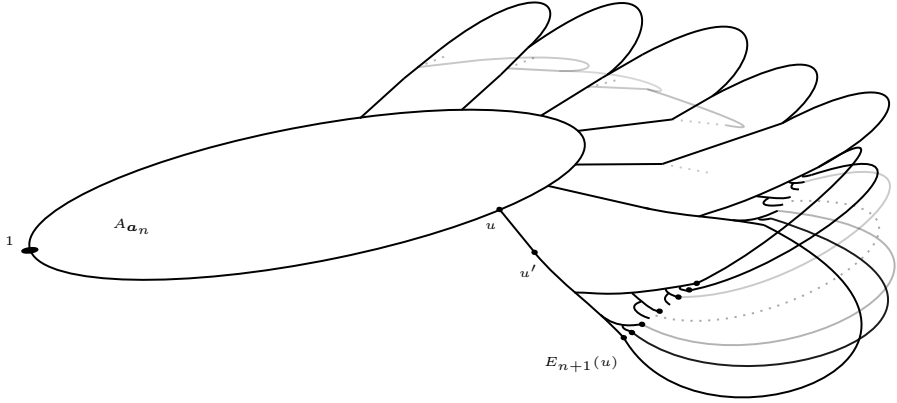

Observe $\lvert \mathbf{X}_{5 + \eta l}(u) \rvert \leq (2m-1)^{\beta \eta l} $ for each $u \in A_n^{(1)}$ by construction as the number of labels for $[u,u']$ is one and $E_{n+1}(u)$'s definition.
Extension paths only exist from the points $u$ we start extending from.
For each possible $2m(2m-1)^4$ choice of labels for $[u,u']$, pick $(2m-1)^{\beta \eta l}$ extension words once and for all. 
This gives us the first part of the second claim.

Set $u_1 = p_i^-$, $u_2 = p_i^+$ for some $i$.
To define $A_{n+1}$ along with a topological embedding we need to identify $2$-cells $C_{a,b} \rightarrow \text{Cay}^2(G,S)$ for $a \in E_{n+1}(u_1)$, $b \in E_{n+1}(u_2)$ where $\gamma^{-1}_{u_1, a} \cdot p_i \cdot \gamma_{u_2, b}$ is the bracket path $B(C_{a,b})$.
$B(C_{a,b})$ will be a $(n, 5 + \eta l)$-bracket of $A$ as Definition \ref{d : bracket}.

By \cite[Prop. 2.7]{Mackay-conf-rand-12}, since $ \lvert \gamma^{-1}_{u_1, a} \cdot p_i \cdot \gamma_{u_2, b} \rvert_S \leq 10 + 2\eta l + l/H < \frac{19}{30}  d l < d l < l/4$, we can find such a $2$-cell $C_{a,b}$.

We can do this procedure for all such points $u_1, u_2$ as $p_i$ varies amongst the partition since all brackets satisfy the above inequality.
We also have the choice, which we make, to force the boundary words $ \partial C_{a,b} = \partial C_{a',b'}$ if the labels, in $S$, of $\gamma^{-1}_{u_1, a} \cdot p_i \cdot \gamma_{u_2, b}$ and $\gamma^{-1}_{u_1', a'} \cdot p_j \cdot \gamma_{u_2', b'}$ are equal for some previously labelled $C_{a',b'}$.
This gives us the second part of the second claim.

This defines $A_{n+1}$ with $\mathbf{a_{n+1}} = (\mathbf{a_n}, a_{n+1})$ for $a_{n+1} \in \{1,\ldots, V\}$.
Inductively, we have constructed a topological immersion $A \rightarrow \text{Cay}^2(G,S)$.

\begin{figure}[!ht]

    \centering
    \label{f : filling in bracket}

\tikzset{every picture/.style={line width=0.75pt}} 

\begin{tikzpicture}[x=0.75pt,y=0.75pt,yscale=-1,xscale=1]

\draw    (259.65,192.94) .. controls (286.45,185.34) and (338.05,121.74) .. (321.25,100.94) ;
\draw    (280.18,179.6) .. controls (305.98,189.6) and (338.38,189.2) .. (380.18,179.6) ;
\draw    (321.45,125.14) .. controls (350.45,120.94) and (385.65,120.14) .. (421.45,125.14) ;
\draw    (380.18,179.6) .. controls (392.03,177.51) and (403.74,173.07) .. (413.73,167.57) .. controls (423.73,162.06) and (456.92,129.13) .. (422.48,125.14) ;
\draw    (421.45,125.14) .. controls (445.65,110.94) and (460.45,70.14) .. (402.45,89.34) ;
\draw    (386.38,219.6) .. controls (421.58,210.4) and (437.58,166) .. (380.18,179.6) ;
\draw   (304.08,150.85) -- (309.35,148.72) -- (308.24,154.29) ;
\draw   (340.53,188.86) -- (335.27,186.71) -- (339.95,183.49) ;
\draw   (367.36,119.45) -- (372.64,121.55) -- (367.98,124.81) ;
\draw  [fill={rgb, 255:red, 0; green, 0; blue, 0 }  ,fill opacity=1 ] (390.47,176.64) .. controls (391.48,175.99) and (392.98,175.65) .. (393.83,175.87) .. controls (394.68,176.09) and (394.55,176.78) .. (393.53,177.43) .. controls (392.52,178.07) and (391.02,178.42) .. (390.17,178.2) .. controls (389.32,177.98) and (389.45,177.28) .. (390.47,176.64) -- cycle ;
\draw  [fill={rgb, 255:red, 0; green, 0; blue, 0 }  ,fill opacity=1 ] (320.45,125.14) .. controls (320.45,124.32) and (321.12,123.66) .. (321.94,123.66) .. controls (322.76,123.66) and (323.42,124.32) .. (323.42,125.14) .. controls (323.42,125.96) and (322.76,126.63) .. (321.94,126.63) .. controls (321.12,126.63) and (320.45,125.96) .. (320.45,125.14) -- cycle ;
\draw  [fill={rgb, 255:red, 0; green, 0; blue, 0 }  ,fill opacity=1 ] (420.68,125.14) .. controls (421.2,124.15) and (422.42,123.34) .. (423.41,123.34) .. controls (424.41,123.34) and (424.8,124.15) .. (424.28,125.14) .. controls (423.77,126.14) and (422.55,126.94) .. (421.56,126.94) .. controls (420.56,126.94) and (420.17,126.14) .. (420.68,125.14) -- cycle ;
\draw  [fill={rgb, 255:red, 0; green, 0; blue, 0 }  ,fill opacity=1 ] (282.18,178.6) .. controls (282.92,179) and (283.2,179.92) .. (282.8,180.67) .. controls (282.4,181.41) and (281.47,181.69) .. (280.73,181.29) .. controls (279.99,180.89) and (279.71,179.96) .. (280.11,179.22) .. controls (280.51,178.48) and (281.43,178.2) .. (282.18,178.6) -- cycle ;

\draw (274.00,169.08) node [anchor=north west][inner sep=0.75pt]  [font=\tiny]  {$u_{1}$};
\draw (328.22,108.93) node [anchor=north west][inner sep=0.75pt]  [font=\tiny]  {$u_{2}$};
\draw (294.25,142.74) node [anchor=north west][inner sep=0.75pt]  [font=\tiny]  {$p_{i}$};
\draw (387.85,181.21) node [anchor=north west][inner sep=0.75pt]  [font=\tiny]  {$a$};
\draw (419.76,127.87) node [anchor=north west][inner sep=0.75pt]  [font=\tiny]  {$b$};
\draw (257.05,135.54) node [anchor=north west][inner sep=0.75pt]  [font=\scriptsize]  {$A_{\boldsymbol{a}_{n}}$};
\draw (357.85,146.74) node [anchor=north west][inner sep=0.75pt]  [font=\scriptsize]  {$C_{a,b}$};

\end{tikzpicture}
\caption{Filling in a bracket path at step $n$.}
\end{figure}
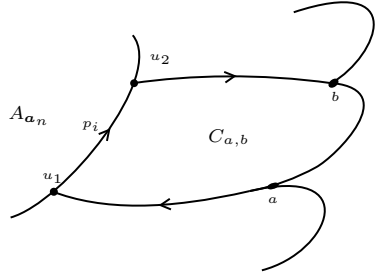

The vertical branching is $V = (2m-1)^{\beta \eta l}$, every $2$-cell $R$ meets at most $l / (\frac{l}{H}) \cdot V = H V$ $2$-cells in $A_{n+1} \setminus A_n$ and so the horizontal branching is at most $H$.
The complex constructed is undistorted and embedded with the proof identical to that found in \cite[Lem. 8.13, Lem. 8.15]{Mackay-conf-rand-16}.
\end{proof}

\end{theorem}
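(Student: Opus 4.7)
The plan is to follow the inductive construction of Mackay \cite[Sec.8.3]{Mackay-conf-rand-16} with one crucial modification: at each stage where we extend the round tree, we deliberately restrict ourselves to only $(2m-1)^{\beta \eta l}$ of the available $(2m-1)^{\eta l}$ continuations, and we do this uniformly in a way that depends only on the label of the path leading up to the extension point. This restriction is what produces the bounded set of extension words $\mathbf{X}_{5+\eta l}$ claimed in the second half of the theorem.

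Concretely, I would start by taking $A_0$ to be a single $2$-cell $C$ whose boundary is a random relator, based at $1$, and then inductively build $A_{n+1}$ from $A_n$ as follows. Partition the outer boundary $E_{\mathbf{a}_n}$ of each piece $A_{\mathbf{a}_n}$ into subpaths $p_i$ of length $\approx l/H$ with endpoints avoiding local minima of $d(1,\cdot)$. At each endpoint $u$, apply \cite[Prop.8.10]{Mackay-conf-rand-16} (with $\lambda = 1/8$, valid since $d < 1/16$) to find a point $u'$ with $d(u,u')=5$ and $d(1,u') = d(1,u) + 5$ whose geodesic to $u$ meets the boundary only at $u$. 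Then \cite[Lem.8.11]{Mackay-conf-rand-16} (applied with $\lambda = 1/8$, $N = 2$, noting $2 = 2^{5-2}/2 > \lambda/(\lambda-d)$ for $d < 1/18$) ensures that the initial length-$5$ subword of $[u,u']$ appears in at most one way as a subword of any relator — so no folding can occur — and uniformly in $u$ there are at most $2m(2m-1)^4$ possible labels for this connector. Then invoke \cite[Lem.8.8]{Mackay-conf-rand-16} with $\epsilon = 1/24$ to produce the full $(2m-1)^{\eta l}$ branching tree of geodesic extensions at $u'$.

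At this point comes the novel step: for each of the at most $2m(2m-1)^4$ possible length-$5$ connector labels, I fix once and for all a choice of $(2m-1)^{\beta \eta l}$ extensions of length $\eta l$ from the available $(2m-1)^{\eta l}$. This determines $E_{n+1}(u)$ purely as a function of the label of $[u,u']$, so that the full set of $(n,5+\eta l)$-extension path labels $\mathbf{X}_{5+\eta l}$ is of size at most $2m(2m-1)^4 \cdot (2m-1)^{\beta \eta l}$, giving the cardinality bound. To fill in the bracket corresponding to each partition piece $p_i$ and each pair $(a,b) \in E_{n+1}(u_1) \times E_{n+1}(u_2)$, note that the bracket path has length at most $10 + 2\eta l + l/H$, and using $\eta = d/40$ and $H > 2/d$ this is bounded above by $dl < l/4$, so \cite[Prop.2.7]{Mackay-conf-rand-12} supplies a $2$-cell $C_{a,b}$ closing it off. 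I make the consistent choice that two $2$-cells whose bracket paths carry the same label are assigned the same relator boundary word, yielding the second assertion.

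The count shows vertical branching $V = (2m-1)^{\beta\eta l}$, and each $2$-cell meets at most $l/(l/H) \cdot V = HV$ cells in $A_{n+1}\setminus A_n$, giving horizontal branching bounded by $H$; the conditions of Definition \ref{d : round tree} follow from the inductive setup essentially as in Mackay. Finally, the fact that this complex is embedded and undistorted (i.e.\ that $A^{(1)} \to \mathrm{Cay}^2(G,S)$ is a quasi-isometric embedding) is obtained verbatim from \cite[Lem.8.13, Lem.8.15]{Mackay-conf-rand-16}, since restricting the available extensions can only decrease opportunities for distortion or self-intersection. The main obstacle I anticipate is not any single step but rather verifying that the two uniformity requirements — uniform label-count bounds independent of $u$, and consistent labelling of $2$-cells across brackets with identical labels — can be maintained while preserving the probabilistic inputs from \cite[Prop.8.10, Lem.8.11, Lem.8.8, Prop.2.7]{Mackay-conf-rand-16, Mackay-conf-rand-12}; this is where the choice $\eta = d/40$ and $H > 2/d$ must be tracked carefully to keep all bracket lengths below the $l/4$ threshold needed for filling.
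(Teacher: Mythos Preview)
Your proposal is correct and follows essentially the same approach as the paper: the inductive construction, the partition of $E_{\mathbf{a}_n}$, the invocation of \cite[Prop.8.10, Lem.8.11, Lem.8.8]{Mackay-conf-rand-16} with the same parameters, the restriction to $(2m-1)^{\beta\eta l}$ extensions chosen once and for all per connector label, the bracket-filling via \cite[Prop.2.7]{Mackay-conf-rand-12}, the consistent labelling of $2$-cells, and the deferral of undistortedness to \cite[Lem.8.13, 8.15]{Mackay-conf-rand-16} all match the paper's proof. (One cosmetic slip: you write $2 = 2^{5-2}/2$, but $2^{5-2}/2 = 4$; the paper's version is $2^{5-2} > 2 > \lambda/(\lambda-d)$, and either way the needed inequality holds.)
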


\subsection{Restrictions on the Number of Emanating Words}\label{ss : emanating restrictions}

Our main goal is for built undistorted round trees to stay undistorted under further quotients.
We will later apply a probabilistic diagram rule out statement to `small' van-Kampen diagrams abstracted from \cite[Thm.6.4.1]{Calegari-Waker-surfsub-rand-15}.
These diagrams are controlled in size and shape by the hyperbolic geometry that is generic in Gromov's model $\mathcal{G}^d_{m,l}$. 

In Section \ref{s : build round trees all densities} we will see that if $A$ is distorted at a higher density these diagrams can appear in the Cayley $2$-complex with the possibility of some boundary word labels coming from emanating words (Definition \ref{d : emanating words}) of $A$.
We need to control the number of emanating words in our round tree, which Lemma \ref{l : counting emanating words intrinsic} provides.

\begin{definition}(Emanating paths)\label{d : emanating paths}
Embedded paths $\gamma$ of length $k$ that fit into combinatorial maps $\gamma  \rightarrow \gamma' \rightarrow A^{(1)} \rightarrow \text{Cay}^{2}(\langle S \rangle ,S)$ with $\gamma'(0) = 1$ and $\gamma'$ geodesic in $A^{(1)}$ are called \emph{emanating paths} of length $k$. Denote the set of all such paths by $\Gamma(k)$.
\end{definition}

\begin{definition}(Emanating words)\label{d : emanating words}
 The set $\mathbf{E}_k$ of \emph{emanating words} of length $k$ for a built round tree $A \rightarrow \text{Cay}^2(\langle S \rangle, S )$ is the set of reduced labels of images of emanating paths $\gamma \in \Gamma(k)$.
\end{definition}

For later sections it is useful to denote the set of emanating words of length at most $k$ as $\mathbf{E}_{\leq k}$.

We now compute an upper bound on the number of emanating words in a round tree $A$ given by the parameter set up in Theorem \ref{t : build low d round trees}.
The restrictions on extension and bracket paths from that theorem help strengthen the upper bound.

\begin{lemma}(Counting emanating words)\label{l : counting emanating words intrinsic}
For the hypothesis in Theorem \ref{t : build low d round trees} and for all $\epsilon > 0 $ one can arrange so for each $k \geq 1$ the set of emanating words $\mathbf{E}_k$ satisfy

$$ \lvert \mathbf{E}_k \rvert \leq \frac{l^2k^{\frac{40k}{d l}}}{2} (2m-1)^{(2\beta + \frac{40}{dH} + \epsilon)k }  (2m-1)^{4dl} $$

\noindent
for built round trees $A((2m-1)^{\beta \eta l}, H)$ with overwhelming probability in $\mathcal{G}^d_{m,l}$.

\begin{proof}
Let $w \in \mathbf{E}_k$ with $\gamma =_S w$ the path, and $\gamma'$ the geodesic $\gamma \rightarrow \gamma' \rightarrow A^{(1)}$ with $\gamma'(0) = 1$ and $\rho$ be the path metric on $A^{(1)}$.
By \cite[Lem. 8.13]{Mackay-conf-rand-16} or Theorem \ref{t : build low d round trees}
the geodesics $\gamma'$ follow along the boundary path of $A_\emptyset$, a defining relation, till it hits some point $u_1 \in \partial A_\emptyset$ and branches along an extension path, follows another relation, filled in by some neighbouring bracket path, then hits $u_2 \in \partial A_1$ and branches along an extension path and so on. 
Suppose $\gamma$ passes through $\gamma(0)$ to $u_j, u_{j+1}, \ldots, u_{j'}$ in $\gamma'$ and $u_{j'}$ to $\gamma(1)$.

We begin by counting the number of labels for the subpath of $\gamma$ joining $\gamma(0)$ to $u_j$ and $u_{j'}$ to $\gamma(1)$.
See Figure \ref{f : initial and final path}.
Theorem \ref{t : generic small cancelation} implies the $C'(2d)$ small cancellation hypothesis holds generically in $\mathcal{G}^d_{m,l}$.
This implies that as $\gamma$ follows a defining relation from $\gamma(0)$ once we pass $2dl$ along the boundary path of that relation we have uniquely defined the possible labels for the boundary path of the relation.
Likewise for the path of $\gamma$ from $u_{j'}$ to $\gamma(1)$.

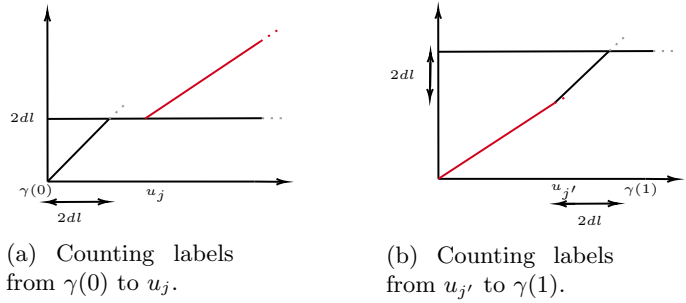
\begin{figure}[!ht]
    \centering
\begin{subfigure}{0.25\textwidth}

    \tikzset{every picture/.style={line width=0.75pt}} 

    \begin{tikzpicture}[x=0.75pt,y=0.75pt,yscale=-0.45,xscale=0.45]

    \draw    (135.43,215.3) -- (367.42,215.36) ;
    \draw    (135.43,215.3) -- (204.4,145.1) ;
    \draw    (134.8,145.1) -- (375.6,144.3) ;
    \draw    (367.42,215.36) -- (400.86,215.66) ;
    \draw [shift={(402.86,215.67)}, rotate = 180.51] [color={rgb, 255:red, 0; green, 0; blue, 0 }  ][line width=0.75]    (10.93,-3.29) .. controls (6.95,-1.4) and (3.31,-0.3) .. (0,0) .. controls (3.31,0.3) and (6.95,1.4) .. (10.93,3.29)   ;
    \draw    (135.43,215.3) -- (135.2,26.7) ;
    \draw [shift={(135.2,24.7)}, rotate = 89.93] [color={rgb, 255:red, 0; green, 0; blue, 0 }  ][line width=0.75]    (10.93,-3.29) .. controls (6.95,-1.4) and (3.31,-0.3) .. (0,0) .. controls (3.31,0.3) and (6.95,1.4) .. (10.93,3.29)   ;
    \draw [color={rgb, 255:red, 208; green, 2; blue, 27 }  ,draw opacity=1 ]   (244.6,144.8) -- (374.2,58) ;
    \draw [color={rgb, 255:red, 155; green, 155; blue, 155 }  ,draw opacity=1 ] [dash pattern={on 0.84pt off 2.51pt}]  (204.4,145.1) -- (226,125.1) ;
    \draw [color={rgb, 255:red, 155; green, 155; blue, 155 }  ,draw opacity=1 ] [dash pattern={on 0.84pt off 2.51pt}]  (375.6,144.3) -- (402.8,144.3) ;
    \draw [color={rgb, 255:red, 208; green, 2; blue, 27 }  ,draw opacity=1 ] [dash pattern={on 0.84pt off 2.51pt}]  (374.2,58) -- (396.8,43.1) ;
    \draw    (138.67,238.33) -- (200.67,238.01) ;
    \draw [shift={(202.67,238)}, rotate = 179.7] [color={rgb, 255:red, 0; green, 0; blue, 0 }  ][line width=0.75]    (10.93,-3.29) .. controls (6.95,-1.4) and (3.31,-0.3) .. (0,0) .. controls (3.31,0.3) and (6.95,1.4) .. (10.93,3.29)   ;
    \draw    (155,238.33) -- (136.67,238.33) ;
    \draw [shift={(134.67,238.33)}, rotate = 360] [color={rgb, 255:red, 0; green, 0; blue, 0 }  ][line width=0.75]    (10.93,-3.29) .. controls (6.95,-1.4) and (3.31,-0.3) .. (0,0) .. controls (3.31,0.3) and (6.95,1.4) .. (10.93,3.29)   ;

    \draw (241.91,220.73) node [anchor=north west][inner sep=0.75pt]  [font=\tiny]  {$u_{j}$};
    \draw (90,135) node [anchor=north west][inner sep=0.75pt]  [font=\tiny]  {$2dl$};
    \draw (100,215) node [anchor=north west][inner sep=0.75pt]  [font=\tiny]  {$ 
    \gamma ( 0)$};
    \draw (143,247.63) node [anchor=north west][inner sep=0.75pt]  [font=\tiny]  {$2dl$};
    \end{tikzpicture}
    \caption{Counting labels from $\gamma(0)$ to $u_j$.}
    \label{fig:1}
\end{subfigure}\hfil 
\begin{subfigure}{0.25\textwidth}

    \tikzset{every picture/.style={line width=0.75pt}} 

    \begin{tikzpicture}[x=0.75pt,y=0.75pt,yscale=-0.45,xscale=0.45]

    \draw    (107.53,215.3) -- (339.52,215.36) ;
    \draw    (237.7,130.3) -- (299,71.83) ;
    \draw    (107.23,72.77) -- (348.03,71.97) ;
    \draw    (339.52,215.36) -- (372.96,215.66) ;
    \draw [shift={(374.96,215.67)}, rotate = 180.51] [color={rgb, 255:red, 0; green, 0; blue, 0 }  ][line width=0.75]    (10.93,-3.29) .. controls (6.95,-1.4) and (3.31,-0.3) .. (0,0) .. controls (3.31,0.3) and (6.95,1.4) .. (10.93,3.29)   ;
    \draw    (107.53,215.3) -- (107.3,26.7) ;
    \draw [shift={(107.3,24.7)}, rotate = 89.93] [color={rgb, 255:red, 0; green, 0; blue, 0 }  ][line width=0.75]    (10.93,-3.29) .. controls (6.95,-1.4) and (3.31,-0.3) .. (0,0) .. controls (3.31,0.3) and (6.95,1.4) .. (10.93,3.29)   ;
    \draw [color={rgb, 255:red, 208; green, 2; blue, 27 }  ,draw opacity=1 ]   (107.53,215.3) -- (237.7,130.3) ;
    \draw [color={rgb, 255:red, 155; green, 155; blue, 155 }  ,draw opacity=1 ] [dash pattern={on 0.84pt off 2.51pt}]  (299,71.83) -- (320.6,51.83) ;
    \draw [color={rgb, 255:red, 155; green, 155; blue, 155 }  ,draw opacity=1 ] [dash pattern={on 0.84pt off 2.51pt}]  (348.03,71.97) -- (375.23,71.97) ;
    \draw [color={rgb, 255:red, 208; green, 2; blue, 27 }  ,draw opacity=1 ] [dash pattern={on 0.84pt off 2.51pt}]  (237.7,130.3) -- (250.1,123.5) ;
    \draw    (98.5,84.7) -- (97.93,123.5) ;
    \draw [shift={(97.9,125.5)}, rotate = 270.84] [color={rgb, 255:red, 0; green, 0; blue, 0 }  ][line width=0.75]    (10.93,-3.29) .. controls (6.95,-1.4) and (3.31,-0.3) .. (0,0) .. controls (3.31,0.3) and (6.95,1.4) .. (10.93,3.29)   ;
    \draw    (98.3,112.5) -- (98.65,75.5) ;
    \draw [shift={(98.67,73.5)}, rotate = 90.54] [color={rgb, 255:red, 0; green, 0; blue, 0 }  ][line width=0.75]    (10.93,-3.29) .. controls (6.95,-1.4) and (3.31,-0.3) .. (0,0) .. controls (3.31,0.3) and (6.95,1.4) .. (10.93,3.29)   ;
    \draw    (244.33,243.33) -- (306.33,243.01) ;
    \draw [shift={(308.33,243)}, rotate = 179.7] [color={rgb, 255:red, 0; green, 0; blue, 0 }  ][line width=0.75]    (10.93,-3.29) .. controls (6.95,-1.4) and (3.31,-0.3) .. (0,0) .. controls (3.31,0.3) and (6.95,1.4) .. (10.93,3.29)   ;
    \draw    (260.67,243.33) -- (242.33,243.33) ;
    \draw [shift={(240.33,243.33)}, rotate = 360] [color={rgb, 255:red, 0; green, 0; blue, 0 }  ][line width=0.75]    (10.93,-3.29) .. controls (6.95,-1.4) and (3.31,-0.3) .. (0,0) .. controls (3.31,0.3) and (6.95,1.4) .. (10.93,3.29)   ;

    \draw (50,90) node [anchor=north west][inner sep=0.75pt]  [font=\tiny]  {$2dl$};
    \draw (230,218) node [anchor=north west][inner sep=0.75pt]  [font=\tiny]  {$u_{j'}$};
    \draw (260,253.63) node [anchor=north west][inner sep=0.75pt]  [font=\tiny]  {$2dl$};
    \draw (312,217.23) node [anchor=north west][inner sep=0.75pt]  [font=\tiny]  {$\gamma ( 1)$};
    \end{tikzpicture}
    \caption{Counting labels from $u_{j'}$ to $\gamma(1)$.}
    \label{fig:2}
\end{subfigure}\hfil 
   \caption{Along the $x$-axis is $\gamma$.
   The $y$-axis counts $\log_{2m-1}$ of the number of labels possible for such a path. }
   \label{f : initial and final path}
\end{figure}

We will bound the rest of the number of labels in two distinct ways which will combine and give our actual bound.

Suppose $\gamma$ passes through $u_i$ and $u_{i+1}$.
It contains an extension word $w_i \in \mathbf{X}_{5 + \eta l}$ from $u_i$ to $e_i$ which by Theorem \ref{t : build low d round trees} satisfies $\lvert \mathbf{X}_{5 + \eta l} \rvert \leq 2m(2m-1)^4(2m-1)^{\beta \eta l}$.
This implies there are at most $2m(2m-1)^4(2m-1)^{\beta \eta l}$ labels for $w_i$ along $\gamma$ then at most $2m-1$ choices for each subsequent letter.

Alternatively, moving forward from $e_i$ the next letter in $w$ comes from the bounding relator of the $2$-cell $C = C^{i}_{e_i, b}$ where $b$ is some endpoint of a bracket path $B(C)$ sharing the emanating word $w_i$ as a label of a subpath.
Each candidate $C$ along with its boundary path, by Theorem \ref{t : build low d round trees}, is determined by the bracket path $B = B(C)$ uniquely.
There are at most $(2m)^2(2m-1)^8(2m-1)^{2\beta \eta l + l/H}$ bracket paths giving at most $(2m)^2(2m-1)^8(2m-1)^{2\beta \eta l + l/H}$ labels for $B(C)$ and hence labels of $\gamma$ from $u_i$ to $u_{i+1}$.

To combine these counting methods and bounds we define $t_i$ to be the point at which they cross.

\begin{figure}[!ht]
    \centering

\tikzset{every picture/.style={line width=0.75pt}} 

\begin{tikzpicture}[x=0.75pt,y=0.75pt,yscale=-1,xscale=1]

\draw    (102.91,246.32) -- (91.73,246.5) ;
\draw    (88.71,248.37) -- (320.71,248.43) ;
\draw    (194.16,201.76) -- (194.16,248.48) ;
\draw    (274.76,119.26) -- (274.53,248.02) ;
\draw    (320.86,119.26) -- (320.71,248.43) ;
\draw    (103,242.5) -- (194.16,201.76) ;
\draw    (194.16,201.76) -- (274.76,119.26) ;
\draw    (274.76,119.26) -- (320.86,119.26) ;
\draw    (89.38,119.82) -- (274.76,119.26) ;
\draw    (320.71,248.43) -- (354.14,248.73) ;
\draw [shift={(356.14,248.75)}, rotate = 180.51] [color={rgb, 255:red, 0; green, 0; blue, 0 }  ][line width=0.75]    (10.93,-3.29) .. controls (6.95,-1.4) and (3.31,-0.3) .. (0,0) .. controls (3.31,0.3) and (6.95,1.4) .. (10.93,3.29)   ;
\draw    (88.71,248.37) -- (89.26,88.14) ;
\draw [shift={(89.27,86.14)}, rotate = 90.19] [color={rgb, 255:red, 0; green, 0; blue, 0 }  ][line width=0.75]    (10.93,-3.29) .. controls (6.95,-1.4) and (3.31,-0.3) .. (0,0) .. controls (3.31,0.3) and (6.95,1.4) .. (10.93,3.29)   ;
\draw [color={rgb, 255:red, 155; green, 155; blue, 155 }  ,draw opacity=1 ] [dash pattern={on 0.84pt off 2.51pt}]  (274.76,119.26) -- (286.72,107.74) ;
\draw [color={rgb, 255:red, 208; green, 2; blue, 27 }  ,draw opacity=1 ]   (88.71,248.37) -- (322.2,85.8) ;
\draw [color={rgb, 255:red, 155; green, 155; blue, 155 }  ,draw opacity=1 ] [dash pattern={on 0.84pt off 2.51pt}]  (320.86,119.26) -- (340,119.25) ;
\draw [color={rgb, 255:red, 208; green, 2; blue, 27 }  ,draw opacity=1 ] [dash pattern={on 0.84pt off 2.51pt}]  (322.2,85.8) -- (342.6,72.2) ;
\draw    (105.49,269.83) -- (191.33,269.67) ;
\draw [shift={(193.33,269.67)}, rotate = 179.89] [color={rgb, 255:red, 0; green, 0; blue, 0 }  ][line width=0.75]    (10.93,-3.29) .. controls (6.95,-1.4) and (3.31,-0.3) .. (0,0) .. controls (3.31,0.3) and (6.95,1.4) .. (10.93,3.29)   ;
\draw    (127.91,269.83) -- (90.29,269.92) ;
\draw [shift={(88.29,269.93)}, rotate = 359.86] [color={rgb, 255:red, 0; green, 0; blue, 0 }  ][line width=0.75]    (10.93,-3.29) .. controls (6.95,-1.4) and (3.31,-0.3) .. (0,0) .. controls (3.31,0.3) and (6.95,1.4) .. (10.93,3.29)   ;
\draw    (103,241.81) -- (102.91,246.32) ;

\draw (0,110) node [anchor=north west][inner sep=0.75pt]  [font=\tiny]  {$ \begin{array}{l}
2\log_{2m-1}( 2m) + 8\\
\ \ \ \ 2\beta \eta l\ +\ l/H
\end{array}$};
\draw (84.57,256.51) node [anchor=north west][inner sep=0.75pt]  [font=\tiny]  {$u_{i}$};
\draw (98,250.4) node [anchor=north west][inner sep=0.75pt]  [font=\tiny]  {$u_{i}^{'}$};
\draw (191.43,255.78) node [anchor=north west][inner sep=0.75pt]  [font=\tiny]  {$e_{i}$};
\draw (311.43,255.4) node [anchor=north west][inner sep=0.75pt]  [font=\tiny]  {$u_{i+1}$};
\draw (270.86,254.97) node [anchor=north west][inner sep=0.75pt]  [font=\tiny]  {$t_{i}$};
\draw (129.1,277.8) node [anchor=north west][inner sep=0.75pt]  [font=\tiny]  {$5 + \eta l$};
\draw (15.89,192.8) node [anchor=north west][inner sep=0.75pt]  [font=\tiny]  {$ \begin{array}{l}
\log_{2m-1} (2m)\ \\
\ \ \ \ \ 4 + \beta \eta l\ 
\end{array}$};

\end{tikzpicture}

\caption{Finding the slope for the number of labels between $u_i$ and $u_{i+1}$. 
Along the $x$-axis is $\gamma$.
The $y$-axis counts $\log_{2m-1}$ of the number of labels possible for such a path.}
\label{f : labels of u to next}

\end{figure}
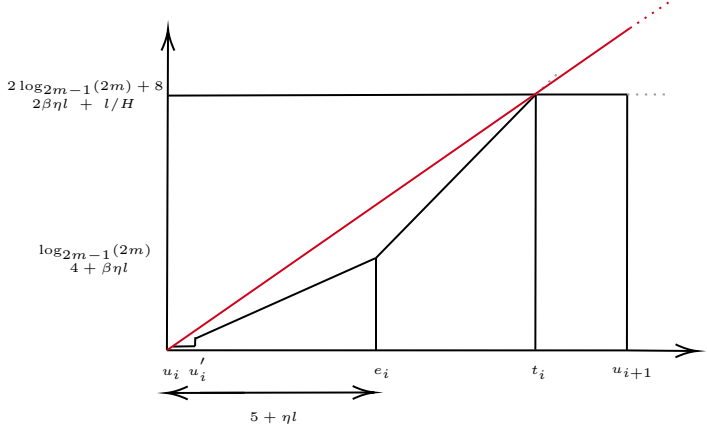

Using elementary Euclidean geometry to find the slope of the red line in Figure \ref{f : labels of u to next} we conclude $ \rho(u_i,t_i) \leq  \log_{2m-1}(2m) + 9 + (\beta + 1)\eta l + l/H $ and that there are at most

$$ (2m-1)^{ \frac{2 M l^{-1} + 2 \beta \eta + 1/H}{(M + 9)l^{-1} + (\beta + 1) \eta + 1/H} \rho (u_{i}, u_{i+1}) } $$
choices of labels of words joining $u_i$ to $u_{i+1}$ along $\gamma'$ for all such $j \leq i < j'$ where $M = \log_{2m-1}(2m)$.

To obtain the precise exponents recall $\eta = d/40$, $H > 2/d$ and take $l$ sufficiently large so that
$$ \frac{2 M l^{-1}+ 2d \beta + 40/H}{\beta d + d + 40/H} < \frac{2Ml^{-1} + 2d \beta + 40/H}{d} \leq 2\beta + \frac{40}{dH} + \epsilon $$
which results in at most $(2m-1)^{(2 \beta + \frac{40}{d H} + \epsilon) \rho(u_i, u_{i+1})}$ choices of labels of words joining $u_i$ to $u_{i+1}$.
We multiply these separate counts across $j < i < j'$ to obtain an upper bound on the number of emanating words of length $k$ passing through a fixed sequence of $ \gamma(0), u_j, u_{j+1}\ldots, u_{j'}, \gamma(1)$:

$$ (2m-1)^{ (2\beta + \frac{40}{dH} + \epsilon) \rho (u_j, u_{j'}) } (2m-1)^{4dl}. $$

Since $ \rho (u_i, u_{i+1}) \geq 5 + \eta l \geq \eta l$ there are at most $k^{\frac{k}{ \eta l}} + 1 \leq 2k^{\frac{k}{ \eta l}} $ positions along $\gamma$ for the points $ u_j, u_{j+1} \ldots, u_{j'}$ to be placed.
As we are geodesic we have $\max\{\rho(\gamma(0), u_j), \rho(\gamma(1), u_{j'})\} \leq l/2$ and so there are at most $l^2/4$ positions for $\gamma(0)$ and $\gamma(1)$ to be placed in $A^{(1)}$ once the $\{u_i\}$ have been placed along $\gamma$.

These three bounds multiplied together conclude the proof.
\end{proof}
\end{lemma}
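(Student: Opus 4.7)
The plan is to analyze an emanating path by following its lift in $A^{(1)}$ through the inductive construction of Theorem \ref{t : build low d round trees} and to count, stage by stage, the number of possible labels. By \cite[Lem. 8.13]{Mackay-conf-rand-16} a geodesic lift $\gamma'$ of an emanating path alternates between traveling along boundary arcs of defining relators and branching off through extension paths at a sequence of points $1, u_j, u_{j+1}, \ldots, u_{j'}, \gamma(1)$, where the $u_i$ lie on successive outer boundaries. I would count labels separately in three regions: the initial segment from $1$ to $u_j$, each intermediate segment from $u_i$ to $u_{i+1}$, and the terminal segment from $u_{j'}$ to $\gamma(1)$.

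For the two extreme segments, $\gamma$ follows the boundary of a single defining relator. The generic $C'(2d)$ small cancellation hypothesis from Theorem \ref{t : generic small cancelation} guarantees that once $2dl$ letters of that boundary have been read the entire boundary word, and hence the label, is uniquely determined. Consequently each extreme contributes at most $(2m-1)^{2dl}$ choices, accounting for the $(2m-1)^{4dl}$ factor.

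For a single intermediate segment I would use the two complementary upper bounds provided by Theorem \ref{t : build low d round trees}. Near $u_i$ the label starts with an extension word of length $5+\eta l$ drawn from $\mathbf{X}_{5+\eta l}$, so contributes at most $2m(2m-1)^{4+\beta\eta l}$ choices, followed by at most one $(2m-1)$-factor per extra letter. Globally over the entire segment, however, the label is determined by the surrounding bracket path because each filling $2$-cell $C_{a,b}$ and its boundary word are determined by the bracket path; this gives at most $(2m)^2(2m-1)^{8+2\beta\eta l + l/H}$ choices. Plotting each bound as a line in the $(\mathrm{length},\log_{2m-1}\#\text{labels})$-plane and taking their pointwise minimum, the effective slope between $u_i$ and $u_{i+1}$ is, after substituting $\eta=d/40$, $H>2/d$ and taking $l$ large so that the $O(1/l)$ term is absorbed into $\epsilon$, at most $2\beta + \tfrac{40}{dH} + \epsilon$. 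Thus each intermediate segment contributes at most $(2m-1)^{(2\beta + 40/(dH)+\epsilon)\rho(u_i,u_{i+1})}$ labels, and multiplying across $j \leq i < j'$ gives $(2m-1)^{(2\beta + 40/(dH)+\epsilon)\rho(u_j,u_{j'})}$.

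Finally I would account for the combinatorics of where the branch points sit. Since consecutive $u_i$ are at distance at least $5+\eta l \geq \eta l$ in $\gamma$, the number of ways to place the branch-point sequence inside a path of length $k$ is bounded by $2k^{k/(\eta l)} = 2k^{40k/(dl)}$, and the two endpoints $\gamma(0), \gamma(1)$ can be positioned in at most $l^2/4$ ways once the $\{u_i\}$ are fixed (using $\rho(\gamma(0),u_j),\rho(u_{j'},\gamma(1))\leq l/2$ by geodesicity). Multiplying the three contributions yields the stated bound. The main obstacle I anticipate is the geometric crossover computation that produces the slope exponent: it requires careful bookkeeping of the additive constants $\log_{2m-1}(2m)$, $4$, $8$, and $l/H$ relative to the multiplicative terms $\beta\eta l$, and verifying that for $l$ sufficiently large these lower-order contributions can be absorbed cleanly into the additive $\epsilon$ in the exponent.
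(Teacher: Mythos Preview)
Your proposal is correct and follows essentially the same approach as the paper's proof: decompose the emanating path into initial, intermediate, and terminal segments, use $C'(2d)$ for the extremes to obtain the $(2m-1)^{4dl}$ factor, combine the extension-word and bracket-path bounds via a crossover/slope argument for the intermediate segments, and finish by counting placements of the $u_i$ and of $\gamma(0),\gamma(1)$. One small notational slip: the sequence should begin at $\gamma(0)$, not at $1$ (the emanating path is a subpath of the geodesic $\gamma'$ from $1$, and need not itself start at $1$); you use $\gamma(0)$ correctly later, so this does not affect the argument.
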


\section{Probabilistic Diagram Rule Out}\label{s : prob diagram rule out}

In this section we will detail probabilistic methods of ruling out certain diagrams using the language of Ollivier \cite{ollivier-survey} and adapting methods inspired by those of Calegari and Walker \cite{Calegari-Waker-surfsub-rand-15}.

For the remainder of this section we fix parameters $m \geq 2$, $l \geq 2$ and $0 < d < 1/2$ for the Gromov-density model $\mathcal{G}^d_{m,l}$.
Also $S$ will be a finite alphabet of size $m$ used as our generators of the groups defined by presentations.

\subsection{Restricted Abstract Diagrams}

The following definition is similar to that of Ollivier \cite[Sec.V.b.Def.57]{ollivier-survey} but differs in the sense that we allow for boundary edges to be `restricted' in an alphabet. 
For a $2$-complex $X$ we denote $\lvert X \rvert$ as the number of faces or $2$-cells.

\begin{definition}(Restricted abstract Diagrams)\label{d : abstract-vkmp}
A planar $2$-complex $X$ will be called a \emph{restricted abstract (van-Kampen) diagram in $S$} if we have the following additional data:

\begin{itemize}
    \item[(1)] an integer $1 \leq n \leq \lvert X \rvert$ called the \emph{number of distinct relators} in $X$, sometimes denoted $n(X)$,
    \item[(2)] a surjective map $b_X$ from the faces of $X$ to the set $\{1, \ldots, n\}$; a face $f$ with image $i$ is said to \emph{bear relator $i$},
    \item[(3)] for each face $f$, a distinguished edge $e(f)$ on on $\partial f$ and an orientation of the plane $+1 $ or $ -1$. Along the boundary path $p$ starting at $e(f)$ following the orientation, we call the $k$th edge of $p$ the $k$th edge of $f$,  
    \item[(4)] a map $r_X$ from the boundary edges $e \in \partial X$ to $\{0,1\}$ that can only take the value $1$ if there exists a face $f$ such that $e \in \partial f$.
    If $r(e) = 1$ we say $e$ is \emph{restricted} and \emph{not-restricted} when $r(e) = 0$,
    \item[(5)] an alphabet $S$ such that for each $e \in r_X^{-1}(1)$ we associate a label $\text{lab}(e) \in S \cup S^{-1}$.
    The function $\text{lab} : r_X^{-1}(1) \rightarrow S \cup S^{-1} $ is called \emph{the boundary restriction function}.
\end{itemize}
A restricted abstract diagram in $S$ will be referred to by the triple $(X, r_X, \text{lab})$ where $X$ is an abstract diagram, like Ollivier, satisfying (1)-(3).
\end{definition}

Note that a van-Kampen diagram $D \rightarrow \text{Cay}^{(2)}(G,S,R)$ defines many distinct restricted abstract diagrams  in $S$ with $w$ a word in $S \cup S^{-1}$ of length at most $\lvert \partial D \rvert$ by forgetting which relators label each face and some subset of the boundary edges that are not-restricted defining $r_X$. 

A planar $2$-complex satisfying (1)-(3) of Definition \ref{d : abstract-vkmp} is called an \emph{abstract diagram}.
A restricted abstract diagram $(X, r_X, \text{lab})$ in $S$ gives an abstract diagram $X$ by forgetting $r_X$ and $\text{lab}$.
An abstract diagram is said to be \emph{reduced} if no edge is adjacent to two faces bearing the same relator with opposite orientations such that the edge is the $k$th edge of both faces and there exists no degree $1$ vertices in $\partial X$.
The restricted diagram $(X, r_X, \text{lab})$ is \emph{reduced} if $X$ is a reduced abstract diagram.

We want to count the number of abstract diagrams for a fixed diagram of a particularly important combinatorial type for our context.
Since we are in the Gromov model $\mathcal{G}^d_{m,l}$ we restrict to diagrams where each face is an $l$-gon.
An $l$-gon is a $2$-cell with boundary length $l$.

\begin{lemma}(Counting abstract diagrams)\label{l : counting the diagrams}
Let $C > 0$ and $l \geq 3$ an integer.
The number of contractible, connected reduced abstract diagrams $X$ with $l$-gon faces and every edge being in the boundary of some face satisfying $\lvert X \rvert < C$ is at most

$$N(C,l) < P(C)l^{4C}
,$$

\noindent
with $P$ independent of $l$.

\begin{proof}
The one skeleton $X^{(1)}$ of $X$ can be seen as a planar graph with vertices being elements of $X^{(0)}$ of degree at least $3$ in $X^{(1)}$ and edges following paths in $X^{(1)}$ passing through elements of $X^{(0)}$ of exactly degree $2$ in $X^{(1)}$ connecting them.
As $X$ is reduced there are no degree $1$ vertices in $X^{(1)}$ meaning this graph covers $X^{(1)}$.

In such a graph $\Gamma = (V,E)$ by the Handshake Lemma, $2\lvert E \rvert = \sum_{v\in V} \text{deg}(v) $ $\geq 3\lvert V \rvert$, applying Euler's formula to the finite planar graph $\Gamma$ we obtain $\lvert E \rvert < \lvert V \rvert + \lvert F \rvert \leq \frac{2}{3} \lvert E \rvert + \lvert F \rvert$, hence $\lvert E \rvert < 3 \lvert F \rvert < 3C$.

Note, $\lvert V \rvert < 2C$.
Let $P(C)$ be the number of finite planar graphs $\Gamma$ with at most $2C$ vertices and at most $C$ faces and vertex degree everywhere at least $3$. 

As we assume each edge is in the boundary of a face, an $l$-gon, there are at most $l^{3C}$ choices for edge lengths determining the one skeleton $X^{(1)}$ with the vertices of degree 2.

Since $n \leq \lvert X \rvert < C$, for each face $f \in X^{(2)}$ there are $2lC$ choices for the data of $b_X(f)$, $e(f)$ and its orientation. Hence, $(2Cl)^C$ choices overall.
We absorb the constants not depending on $l$ into the function $P$.
\end{proof}
\end{lemma}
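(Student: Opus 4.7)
The plan is to reduce counting such abstract diagrams to counting a much simpler combinatorial object: a planar graph whose vertices have degree at least three. Concretely, I would pass from $X^{(1)}$ to the graph $\Gamma = (V,E,F)$ obtained by smoothing out (suppressing) all vertices of degree exactly $2$, so that each edge of $\Gamma$ corresponds to a maximal arc of $X^{(1)}$ passing through degree-$2$ vertices. Since $X$ is reduced there are no degree-$1$ vertices, so $\Gamma$ genuinely covers $X^{(1)}$. The faces of $\Gamma$ correspond to the faces of $X$.

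The second step is to bound $|V|$ and $|E|$ linearly in $C$. The handshake lemma together with the degree-$\geq 3$ condition gives $2|E| \geq 3|V|$, and Euler's formula for the planar graph $\Gamma$ gives $|V| - |E| + |F| = 2$ (using contractibility/connectedness of $X$). Combining these with $|F| \leq |X| < C$ yields $|E| < 3C$ and $|V| < 2C$. Let $P(C)$ be the (finite, $l$-independent) number of isomorphism classes of such planar graphs $\Gamma$ with at most $2C$ vertices, at most $3C$ edges and at most $C$ faces, with all vertex degrees $\geq 3$. This factor is the function promised in the statement.

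Next, for each fixed $\Gamma$, I reconstruct $X$ by choosing, for each of its $<3C$ edges, a positive integer length (the number of degree-$2$ vertices to reinsert plus one). The constraint that each face is an $l$-gon forces the sum of these lengths around each face to be $l$; in particular each edge length is between $1$ and $l$, giving at most $l^{3C}$ choices. Finally, I account for the extra data in Definition~\ref{d : abstract-vkmp}: for each of the $<C$ faces, a relator label in $\{1,\dots,n\}$ (so at most $C$ choices), a distinguished boundary edge (at most $l$ choices), and an orientation ($2$ choices), contributing a further $(2Cl)^C$. Multiplying and folding all factors independent of $l$ into $P(C)$ gives $N(C,l) < P(C)\, l^{4C}$.

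The only genuinely delicate point is making sure the reduction from $X^{(1)}$ to $\Gamma$ really loses no information beyond the choice of edge lengths, so that the product count above is an upper bound. This amounts to checking that the smoothing is canonical once the cyclic ordering of edges at each degree-$\geq 3$ vertex is recorded (which is exactly the datum of a planar embedding), and that the "every edge is in the boundary of some face" hypothesis is what lets us bound all edges of $\Gamma$ by $l$. Both are straightforward, so I expect no real obstacle beyond careful bookkeeping of constants into $P(C)$.
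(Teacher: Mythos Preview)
Your proposal is correct and follows essentially the same approach as the paper's proof: pass to the planar graph $\Gamma$ obtained by suppressing degree-$2$ vertices, bound $|E|<3C$ and $|V|<2C$ via the handshake lemma and Euler's formula, then count $l^{3C}$ edge-length choices and $(2Cl)^C$ choices for the abstract-diagram data $(b_X,e,\text{orientation})$, absorbing all $l$-independent factors into $P(C)$. Your extra remarks on why the smoothing loses no information and why edge lengths are bounded by $l$ are sound and simply make explicit what the paper leaves implicit.
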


Going from an abstract diagram to a van Kampen diagram we need the notion of a (partial) filling by a group.

\begin{definition}(Filling a diagram)\label{d : filling-diagram-words}
An $m$-tuple $(w_1, \ldots, w_m)$ of cyclically reduced words in $S$ is said to $m$-\emph{partially fill} an abstract diagram $(X, r_X, \text{lab})$ restricted in $S$ with $m \leq n(X) = n$ if the following holds:

\begin{itemize}
    \item[(1)] for each two faces $f_1$ and $f_2$ bearing relators $i_1 \leq m$ and $i_2 \leq m$, such that the $k_1$th edge of $f_1$ is equal to the $k_2$th edge of $f_2$, then the $k_1$th letter of $w_{i_1}$ and the $k_2$th letter of $w_{i_2}$ are inverses (when the orientations of $f_1$ and $f_2$ agree) or equal (when the orientations disagree), and
    \item[(2)]for each boundary face $f$ bearing relator $i \leq m$ if the $k$th edge $e$ of $f$ is restricted, $r_X(e) = 1$, then the $k$th letter of $w_i$ is $lab(e)$.
\end{itemize}
    
An $n(X)$-tuple of cyclically reduced words that partially fills an abstract diagram $X$ is said to \emph{fill} the diagram.
\end{definition}

Given a group presentation $G = \langle S | R \rangle$ where $R$ is a set of cyclically reduced words we say an abstract diagram $X$ is (partially) fillable in $G$ if it is (partially) filled by a tuple of distinct words from $R$.

To talk about the interaction of faces along internal edges in an abstract diagram we define the following.
Supose we have an abstract diagram $X$ with all faces $l$-gons and for $1 \leq i \leq n$ let $m_i$ denote the number of times relator $i$ appears in $D$.
Up to reordering suppose $m_1 \geq \ldots \geq m_n$.
Set $m_{n+1} = 0$.

For $1 \leq i_1, i_2 \leq n$ and $1 \leq k_1, k_2 \leq l$ say that $(i_1,k_1) > (i_2, k_2)$ if $i_1 > i_2$, or if $i_1 = i_2$ and $k_1 > k_2$.
Let $e$ be an edge of $X$ adjacent to faces $f_1$ and $f_2$ bearing relators $i_1$ and $i_2$, which is the $k_1$th edge of $f_1$ and the $k_2$th edge of $f_2$.
Say $e$ \emph{belongs} to $f_1$ if $(i_1, k_1) > (i_2, k_2)$, and belongs to $f_2$ if $(i_2, k_2) > (i_1, k_1)$, so that an edge belongs to the second face it meets.

So far belonging is not defined for boundary edges. We say a boundary edge $e \in \partial X$ belongs to face $f \in X^{(2)}$ if $e \in \partial f \subset \partial X$ and $r_X(e) = 1$.
Note by Definition \ref{d : abstract-vkmp} if $r_X(e) = 1$ for some edge $e$ then such a face $f$ must exist for the edge.
In other words, every restricted edge of $(X, r_X, \text{lab})$ belongs to the boundary of some face of $X$.

If $X$ is reduced and fillable, each internal edge belongs to some face.
When $(i_1, k_1) = (i_2, k_2) $ then either the two faces have opposite orientations and then $X$ is not reduced, or they have the same orientation and the diagram is never fillable since a letter would have to be its own inverse.

Let $E(f)$ be the number of edges belonging to the face $f$.
For a given abstract diagram $X$ we define the degree of constraint $d_c(X)$ as:

$$ d_c(X) = \sum_{f \text{ face of } X}E(f) .$$

The degree of constraint contains information on how different we can treat filling $X$ to independently labelling the edges and faces of $X$: the larger $d_c(X)$ is, the more `constrained' the diagram is and the less likely the abstract diagram $X$ can be filled by a typical group in the model $\mathcal{G}^d_{m,l}$.
See Theorem \ref{t : restricted abstract diagram rule out} to see a precise mathematical statement.

For the Gromov density model as already mentioned we will use $\mathcal{R}^d_{m,l}$ to denote the random variable of uniformily chosen cyclically reduced words of length $l$ chosen in the appropriate alphabet of size $m$ defining the group.

The following lemma follows Ollivier's Lemma \cite[Sec.V.b.Lem.59]{ollivier-survey} with the caveat that we also deal with restricted boundary edges and adds details regarding cyclically reduced words.

We count the number of cyclically reduced words in a free group by a result of Rivin \cite{Rivin-growth-free-groups}.

\begin{theorem}(\cite[Thm. 1.1]{Rivin-growth-free-groups}, Rivin's count)\label{t : counting cyclically reduced words}
The number $N_l$ of cyclically reduced words of length $l$ in $F_m$ satisfies
$$ N_l = (2m-1)^l + 1 + (m-1)(1 + (-1)^l).$$

\end{theorem}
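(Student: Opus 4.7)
The plan is to compute $N_l$ via a transfer matrix whose spectrum is particularly easy to read off.

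First, I would identify cyclically reduced words of length $l$ with closed cyclic walks: a word $a_1 a_2 \cdots a_l$ in $S \cup S^{-1}$ is cyclically reduced exactly when $a_{i+1} \neq a_i^{-1}$ for all $i$ taken mod $l$. Let $B$ be the $2m \times 2m$ matrix indexed by $S \cup S^{-1}$ with $B_{a,b} = 1$ if $b \neq a^{-1}$ and $0$ otherwise. Then
\[ N_l = \sum_{a_1,\dots,a_l} \prod_{i=1}^{l} B_{a_i,a_{i+1}} = \mathrm{tr}(B^l), \]
where indices are cyclic.

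Next I would decompose $B = J - P$, where $J$ is the $2m \times 2m$ all-ones matrix and $P$ is the permutation matrix sending the basis vector $e_a$ to $e_{a^{-1}}$. A direct computation gives $JP = PJ = J$, so $J$ and $P$ commute and can be simultaneously diagonalized. The involution $P$ has eigenspaces $V_+$ and $V_-$, both of dimension $m$, spanned respectively by vectors of the form $e_a + e_{a^{-1}}$ and $e_a - e_{a^{-1}}$.

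The key spectral step is to describe how $J$ acts on each $V_\pm$. The all-ones vector $\mathbf{1}$ lies in $V_+$ and satisfies $J \mathbf{1} = 2m \mathbf{1}$, while on the $(m-1)$-dimensional orthogonal complement of $\mathbf{1}$ inside $V_+$, the matrix $J$ is zero. On $V_-$, every vector has coordinate sum zero, so $J$ vanishes identically. Hence on $V_+$, $B = J - I$ has eigenvalue $2m - 1$ with multiplicity $1$ and eigenvalue $-1$ with multiplicity $m - 1$, while on $V_-$, $B = J + I$ acts as the identity, giving eigenvalue $1$ with multiplicity $m$.

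Putting it together,
\[ N_l = \mathrm{tr}(B^l) = (2m-1)^l + (m-1)(-1)^l + m = (2m-1)^l + 1 + (m-1)\bigl(1 + (-1)^l\bigr), \]
which is the desired identity. There is no real obstacle here; the only care needed is verifying $JP = PJ$ and the dimensions of the $P$-eigenspaces, after which the eigenvalue computation is routine.
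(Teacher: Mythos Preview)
Your transfer-matrix argument is correct: the identification $N_l = \mathrm{tr}(B^l)$ with $B = J - P$, the commutation $JP = PJ = J$, and the eigenvalue count on $V_\pm$ are all sound, and the trace computation gives the stated formula.

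As for comparison, the paper does not actually prove this statement; it simply quotes it as \cite[Thm.~1.1]{Rivin-growth-free-groups} and uses only the crude consequence $N_l \ge (2m-1)^l$ in the subsequent Lemma~\ref{l : inductive filling lemma}. Rivin's own proof is essentially the same transfer-matrix computation you carry out, so your approach is the standard one and nothing is lost or gained relative to the paper, which treats the result as a black box.
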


\begin{lemma}(Inductive filling lemma)\label{l : inductive filling lemma}
Let $(X, r_X, \text{lab})$ be a reduced abstract diagram restricted in $S$.
Set $P_i$ as the probability that there exists an $i$-tuple of words partially filling $X$ by $\mathcal{R}^d_{m,l}$ and $p_i$ as the probability that $i$ uniformly random cyclically reduced words in $S$ partially fill the diagram $X$. 
Then,

\begin{alignat*}{3}
p_i \leq 2m (2m-1)^{ - E_i}p_{i-1} \text{ and }
P_i \leq (2m-1)^{idl}p_i
\end{alignat*}

\noindent
where $E_i = \max\{E(f) : f \text{ is a face bearing } i \}$ and $p_0 = 1$.

\begin{proof}
Suppose $i-1$ cyclically reduced words $(w_j)$ in $S$ are given that partially fill the diagram $X$.

Let $e$ be an edge on the boundary of a face $f$ that bears relator $i$ 
in the restricted abstract diagram $X$.
These spaces fit in the following sequence of combinatorial maps between complexes

$$ e \rightarrow \partial f \rightarrow f \rightarrow X$$
\noindent
considering $\partial f$ as the oriented edge path and $f$ as an abstract subdiagram of $X$.

Suppose $f$ attains the maximum of $E_i = E(f)$.
$\partial f $ is oriented so we can order the edges as $e_1 \leq e_2 \leq \cdots \leq e_l$ where $e_1 = e(f)$. 
Let $1 \leq k \leq l$, and we compute the number $n_{i,k}$ which denotes the number of possible symbols in $S \cup S^{-1}$ that we can associated to the $kth $ edge of $f$ so that the new `partially filled' diagram is not reduced given the assignment from belonging under the inductive hypothesis and the previous $1 \leq k' < k$ symbols associated to $e_{k'}$. 

Depending on which of $e_l, e_1$ and $e_2$ belong, $n_{i,1} \in \{1,2m-2, 2m-1, 2m \}$ which is $\leq 2m$.
For $2 \leq k \leq l$, depending if $e_k, e_{k+1}$ belong ($e_{l+1} = e_1$) we have $n_{i,k} \in \{1,2m-2, 2m-1\} $ which is $\leq 2m-1$.
This gives: 
\begin{alignat*}{3}
p_i &\leq \;\mathbb{P}(f \text{ is partially fillable with } w_i \;|\; w_i \text{ is a randomly chosen } \\  & \; \; \; \; \; \; \text{cyclically reduced word of length  } l)p_{i-1}, \\
 &\leq \frac{\prod_{k=1}^{l}n_{i,k}}{N_l}p_{i-1}  \leq \frac{2m(2m-1)^{l - E_i}}{(2m-1)^l}p_{i-1} = 2m(2m-1)^{-E_i}p_{i-1}.
\end{alignat*}

The second to last inequality uses the observation $N_l \geq (2m-1)^l$ from Theorem \ref{t : counting cyclically reduced words}.

To conclude the lemma we observe $P_i \leq \lvert \mathcal{R}^d_{m,l } \lvert^i p_i = (2m-1)^{idl}p_i$.
\end{proof}
\end{lemma}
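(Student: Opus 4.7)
The plan is to prove the two inequalities separately. The first, $p_i \leq 2m(2m-1)^{-E_i}p_{i-1}$, I would establish by conditioning on the event (of probability $p_{i-1}$) that the first $i-1$ independent uniform cyclically reduced words $(w_1,\ldots,w_{i-1})$ already partially fill $X$, and then bounding, by an explicit letter count, the conditional probability that a fresh independent uniform cyclically reduced word $w_i$ extends this partial filling. The second inequality, $P_i \leq (2m-1)^{idl}p_i$, will follow from a union bound over ordered tuples of distinct relators drawn from $\mathcal{R}^d_{m,l}$.

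For the first bound, I would pick a face $f$ of $X$ bearing relator $i$ that attains $E(f)=E_i$ and walk around its cyclically oriented boundary $e_1,\ldots,e_l$, counting at each step $k$ the number $n_{i,k}$ of letters in $S\cup S^{-1}$ which can validly be assigned to the $k$-th edge given the labels already fixed on $e_1,\ldots,e_{k-1}$ together with all forcing constraints. The constraints at position $k$ are of two kinds: (a) reducedness of $w_i$, which excludes the inverse of the letter at the preceding position; (b) forcing by a restriction label, or by an inductively assigned word $w_{i'}$ with $i'<i$ on an adjacent face, or by a previously-set position $k'<k$ of $w_i$ itself whenever $e_k$ is identified with an edge of a face also bearing $i$ (possibly $f$ itself) at position $k'$. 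A short case split on which of $e_1$, $e_2$, $e_l$ belong to $f$ gives $n_{i,1}\leq 2m$, and an analogous split for $k\geq 2$ gives $n_{i,k}\leq 2m-1$; in all cases a belonging edge forces $n_{i,k}=1$. Multiplying yields $\prod_{k=1}^l n_{i,k} \leq 2m(2m-1)^{l-E_i}$, and dividing by the total count $N_l\geq(2m-1)^l$ from Theorem \ref{t : counting cyclically reduced words} delivers the first inequality.

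I expect the main obstacle to be the bookkeeping at position $k=1$, where cyclic reducedness creates a long-range interaction with position $l$, together with the possibility that $f$ is self-identified along several of its edges or shares edges with other faces also bearing $i$. The cleanest way around both is to process positions $1,2,\ldots,l$ in order and observe that every edge that belongs to $f$ has its label completely determined by data already visible at step $k$ (a restriction, a word $w_{i'}$ with $i'<i$, or an earlier position of $w_i$), so that the $E_i$ forcing constraints only reduce, and never clash with, the naive cyclically-reduced count.

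For the second inequality, $\mathcal{R}^d_{m,l}$ consists of $\floor*{(2m-1)^{dl}}$ i.i.d.\ uniformly chosen cyclically reduced words. The event that some $i$-tuple from $\mathcal{R}^d_{m,l}$ partially fills $X$ is the union, over ordered $i$-tuples of distinct entries of $\mathcal{R}^d_{m,l}$, of the event that the given tuple partially fills $X$; by independence each such tuple has probability exactly $p_i$ of doing so, and there are at most $(2m-1)^{idl}$ such ordered tuples, so a union bound gives $P_i \leq (2m-1)^{idl}p_i$.
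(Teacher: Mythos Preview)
Your proposal is correct and follows essentially the same route as the paper: condition on the first $i-1$ words, choose a face $f$ bearing relator $i$ with $E(f)=E_i$, walk around $\partial f$ bounding the number $n_{i,k}$ of admissible letters by $2m$ at $k=1$ and $2m-1$ for $k\geq 2$ (with $n_{i,k}=1$ whenever $e_k$ belongs to $f$), and divide the resulting product by $N_l\geq(2m-1)^l$; the second inequality is then the same union bound over ordered $i$-tuples from $\mathcal{R}^d_{m,l}$. Your explicit unpacking of ``belonging'' into the three forcing mechanisms (restriction, lower-index relator, earlier position of $w_i$) matches the paper's definition exactly.
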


\begin{theorem}(Restricted boundary diagram rule out)\label{t : restricted abstract diagram rule out}
Let $m \geq 2$, $0 < d < 1/2$, for $l \geq 2$ and $(X, r_X, \text{lab})$ be a reduced abstract diagram restricted in $S$ with each face an $l$-gon.
If $2\lvert r_X^{-1}(1) \rvert \geq \lvert \partial X \rvert$ then,
$$\mathbb{P}((X, r_X, \text{lab}) \text{ is fillable by } \mathcal{R}^{d}_{m,l} ) \leq  2m(2m-1)^{(d - 1/2)l }.$$
 
\begin{proof}
We follow the framework of Ollivier \cite[Prop. 58]{ollivier-survey}. 
Denote $I$ as the set of internal edges of $X$ i.e. those not on the boundary $\partial X$.
We start with the verifiable statements: 
$$
l \lvert X \rvert \leq \lvert \partial X \rvert + 2\lvert I \rvert \text{ and } d_c(X) = \lvert I \rvert + \lvert r_X^{-1}(1) \rvert
$$
as every edge $e \in r_X^{-1}(1) \cup I$ belongs to a face.
Since by hypothesis, $\lvert r_X^{-1}(1) \rvert \geq \lvert \partial X \rvert/2$, we have:
\begin{align*}
    0 & \geq l \lvert X \rvert - \lvert \partial X \rvert - 2 \lvert I \rvert \\
      & = l \lvert X \rvert - \lvert \partial X \rvert + 2 \lvert r_X^{-1}(1) \rvert - 2d_c(X)\\
      & \geq l \lvert X \rvert - 2d_c(X).
\end{align*}

\noindent
We expand:
\begin{alignat*}{3}
  0 &\geq l\lvert X \rvert -2\sum_{f \text{ face of } X}E(f) \\
   &\geq    l \lvert X \rvert - 2\sum_{i = 1}^{n}m_iE_i
\end{alignat*}
\noindent
where $E_i = \max\{E(f) : f \text{ bears }i\}$ as above.
Following the notation of Lemma \ref{l : inductive filling lemma} and its conclusion of $ - E_i \geq \log_{2m-1}p_i - \log_{2m-1}2mp_{i-1}$ we obtain:
$$ 0 \geq l \lvert X \rvert + 2 \sum_{i=1}^n m_i (\log_{2m-1}p_i - \log_{2m-1}2mp_{i-1}).$$
\noindent
We have, recalling $m_{n+1} = 0$,
$$ 0 \geq l \lvert X \rvert + 2\sum_{i=1}^{n}(m_i - m_{i+1})(\log_{2m-1}P_i - idl) - \; 2\Big(\sum_{i = 1}^n m_i \Big) \log_{2m-1}2m. $$
\noindent
Keep in mind we are taking logarithms of probabilities.
Set $P = \min P_i$, so
$$ 0 \geq l \lvert X \rvert (1 - 2d) + 2 m_1\log_{2m-1}P - 2 \lvert X \rvert\log_{2m-1}2m ,$$

\noindent
observing $m_1 \leq \lvert X \rvert$,
\begin{alignat*}{3}
  0 &\geq l \lvert X \rvert (1 - 2d) + 2 \lvert X \rvert \log_{2m-1}(\frac{P}{2m}).
\end{alignat*}

As $(X, r_X, \text{lab})$ being filled by $\mathcal{R}^{d}_{m,l}$ happens if and only if $(X, r_X, \text{lab})$ is partially filled by $\mathcal{R}^{d}_{m,l}$ for all $i \leq n$, so
\begin{equation*}
   \mathbb{P}((X, r_X, \text{lab}) \text{ is fillable by } \mathcal{R}^{d}_{m,l} ) \leq P \leq 2m(2m-1)^{(d - 1/2)l} .
\end{equation*}
\end{proof}
\end{theorem}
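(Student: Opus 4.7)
The plan is to follow the standard Ollivier framework for probabilistic rule-outs of abstract diagrams, treating restricted boundary edges on equal footing with internal edges in the degree-of-constraint accounting. The key observation is that a restricted boundary edge already carries a fixed label, so it constrains its adjacent face's random relator in exactly the same way that sharing an internal edge with another face would. The hypothesis $2\lvert r_X^{-1}(1) \rvert \geq \lvert \partial X \rvert$ essentially says that ``enough'' of the boundary is constrained to recover the fully-internal case.

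First I would record the two elementary combinatorial identities. Since each face is an $l$-gon, double-counting edge incidences gives $l \lvert X \rvert \leq \lvert \partial X \rvert + 2\lvert I \rvert$, where $I$ is the set of internal edges; and by the definition of belonging, every internal edge and every restricted boundary edge is accounted for exactly once, so $d_c(X) = \lvert I \rvert + \lvert r_X^{-1}(1) \rvert$. Combining these with the hypothesis,
\[
l \lvert X \rvert - 2 d_c(X) \;\leq\; \lvert \partial X \rvert - 2 \lvert r_X^{-1}(1) \rvert \;\leq\; 0,
\]
so $l \lvert X \rvert \leq 2 d_c(X) = 2 \sum_{f} E(f) \leq 2 \sum_{i=1}^{n} m_i E_i$, where $m_i$ counts faces bearing relator $i$ and $E_i = \max\{E(f) : f \text{ bears } i\}$, with the convention $m_1 \geq \dots \geq m_n \geq m_{n+1} = 0$.

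Next I would feed each $E_i$ into Lemma \ref{l : inductive filling lemma}: from $p_i \leq 2m(2m-1)^{-E_i} p_{i-1}$ we obtain $-E_i \geq \log_{2m-1}(p_i/p_{i-1}) - \log_{2m-1}(2m)$. Substituting into the inequality for $l\lvert X \rvert$ and performing an Abel summation on $\sum_i m_i \log_{2m-1}(p_i/p_{i-1})$ rewrites the sum in terms of the non-negative differences $m_i - m_{i+1}$ and partial sums $\log_{2m-1} p_i$; converting via $P_i \leq (2m-1)^{idl} p_i$ then produces the terms $\log_{2m-1} P_i - idl$, which is where the factor $1 - 2d$ enters. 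Setting $P = \min_i P_i$ and using $m_1 \leq \lvert X \rvert$ and $\sum_i m_i \leq \lvert X \rvert$, the inequality collapses to
\[
0 \;\geq\; l \lvert X \rvert (1 - 2d) + 2 \lvert X \rvert \log_{2m-1}\!\bigl(P/(2m)\bigr),
\]
which rearranges to $P \leq 2m(2m-1)^{(d-1/2)l}$. Since the event that $(X, r_X, \text{lab})$ is fillable by $\mathcal{R}^d_{m,l}$ is contained in the event of being $i$-partially fillable for each $i \leq n$, the bound follows.

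The main obstacle I anticipate is the Abel-summation bookkeeping: one must sort relators so that $m_i - m_{i+1} \geq 0$ and use the convention $m_{n+1} = 0$ to make the partial sums telescope cleanly, and one must track the factor $(2m-1)^{idl}$ carefully so it contributes exactly $-2d \cdot l \lvert X \rvert$ to the right-hand side rather than something depending on the multiplicities. The appearance of $2m$ (rather than $2m-1$) in the final constant is a genuine artifact of the first edge of each face being free to take any of $2m$ letters (no inverse constraint from a previous letter in the same relator), and this has to be tracked through the inductive step rather than absorbed.
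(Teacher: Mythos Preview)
Your proposal is correct and follows essentially the same route as the paper's proof: the same two combinatorial identities, the same reduction to $l\lvert X\rvert \le 2\sum_i m_i E_i$, the same invocation of the inductive filling lemma to replace $E_i$ by $\log_{2m-1}(p_i/p_{i-1}) - \log_{2m-1}(2m)$, the same Abel summation with the convention $m_{n+1}=0$, and the same endgame with $P=\min_i P_i$ and $m_1\le\lvert X\rvert$. Your anticipated obstacles (the bookkeeping that makes the $idl$ terms contribute exactly $-2dl\lvert X\rvert$, and the provenance of the $2m$ constant) are precisely the points where care is needed, and you have identified them correctly.
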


\section{Building Round Trees at High Densities}\label{s : build round trees all densities}

In this section we show the round trees constructed in Section \ref{s : round trees} for particular parameters are undistorted under the natural quotient map by using the hyperbolic geometry of random groups in $\mathcal{G}^{d}_{m,l}$ for $0 < d < 1/2$ and the probabilistic methods to rule out certain diagrams developed in Section \ref{s : prob diagram rule out}.

Suppose a combinatorial round tree $A$ has been built in a finitely presented group $G = \langle S | R \rangle$ with combinatorial map $\varphi: A \rightarrow \text{Cay}^{(2)}(G,S,R)$.
Let $G' = \langle S | R \cup R' \rangle$ be another finitely presented group with a quotient map $\pi : G \rightarrow G'$ which gives a natural extension to Cayley $2$-complexes, hence there is a combinatorial map $\pi \circ \varphi : A \rightarrow \text{Cay}^{(2)}(G', S, R \cup R')$.

For the density of the respective models we use the subscript $t$ to stand for `target' and $s$ for `source'.
We will use the notation $\pi^{d_t}_{d_s}$ to denote the quotient map $ \pi^{d_t}_{d_s} : \langle S |  \mathcal{R}^{d_s}_{m,l} \rangle \rightarrow \langle S | \mathcal{R}^{d_t}_{m,l} \rangle$, considering $ \mathcal{R}^{d_s}_{m,l} \subset \mathcal{R}^{d_t}_{m,l}$, along with its natural extension to Cayley-$2$-complexes.
For $d \in \{d_s, d_t \}$ let $K_d$ denote the Cayley $2$-complex $\text{Cay}^{(2)}(\langle S | \mathcal{R}^{d}_{m,l} \rangle , S)$.

Parameters $m, l \geq 2$ integers and density $1/8 \leq d_t < 1/2$ set our hypothesis for the model $\mathcal{G}^{d_t}_{m,l}$.
The following choices are made to be concrete but are in no way canonical.
Let $\epsilon = 1/2 - d_t < 1$, we will choose $0 < d_s < 1/18$ defining a lower density model and parameters $0 < \beta < 1$,  $\eta = d_s/40$ and $H > 2/d_s$ defining a combinatorial round tree built in $\mathcal{G}^{d_s}_{m,l}$.
Our choices are:
\begin{equation*}
\epsilon = 1/2 - d_t, d_s = 10^{-7} \epsilon^3, \beta = 10^{-7}\epsilon^2, \eta = 4^{-1} \cdot 10^{-8} \epsilon^3 \text{ and } H = 40 \cdot 10^{14} \epsilon^{-5}.   
\end{equation*}

These parameters give a lower density model of groups for which we can find a combinatorial round tree $A$ depending on the density $d_t$ parameter defining $\mathcal{G}^{d_t}_{m,l}$,
as by design they satisfy the hypothesis of Theorem \ref{t : build low d round trees} and Lemma \ref{l : counting emanating words intrinsic}.

Note our choices have put a restriction on $d_t$ as implicitly we require $d_s \leq d_t$.
We are still in the framework of assuming $d_t \geq 1/8$ and in principle we can find `thin' round trees for $0 < d_t < 1/8$ but one should not expect an improvement of the lower bounds on $\text{Confdim}( \partial_\infty G)$ achieved in \cite{Mackay-conf-rand-16}.

Fix $A = A((2m-1)^{\beta \eta l}, H) $ to be the round tree given by Theorem \ref{t : build low d round trees}.
Let $C > 0$ and $l \geq 2$ by Lemma \ref{l : counting emanating words intrinsic} the set of emanating words $\mathbf{E}_{C l}$ of $A$ satisfies
\begin{equation}\label{eq : count bound}
     \lvert \mathbf{E}_{C l} \rvert \leq \frac{l^2(Cl)^{\frac{40C}{d_s}}}{2} (2m-1)^{4 \cdot 10^{-7}\epsilon^2(C+ \epsilon)l} 
     \end{equation}
\noindent
with overwhelming probability in $\mathcal{G}^{d_s}_{m,l}$.
For the result in the above line we take the epsilon in Lemma \ref{l : counting emanating words intrinsic} to be $10^{-7} \epsilon^2$.

We aim to prove the following result.

\begin{theorem}(High density round trees)\label{t : round trees everywhere}
For all $1/8 \leq d_t < 1/2$, $m \geq 2$ and $d_s, \beta, \eta, H$ as above.
The round tree $A \rightarrow K_{d_s} \rightarrow K_{d_t}$ is an undistorted round tree with overwhelming probability in $\mathcal{G}^{d_s}_{m,l}$ and $\mathcal{G}^{d_t}_{m,l}$.
\end{theorem}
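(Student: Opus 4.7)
The plan is a union-bound argument. The round tree $A$ is already undistorted in $K_{d_s}$ by Theorem~\ref{t : build low d round trees}, so any failure of the quasi-isometric embedding $A \to K_{d_s} \to K_{d_t}$ must be witnessed by a shortcut diagram in $K_{d_t}$ using at least one new relator from $\mathcal{R}^{d_t}_{m,l} \setminus \mathcal{R}^{d_s}_{m,l}$. The strategy is to show that the probability any such witness exists vanishes as $l \to \infty$ by combining the probabilistic diagram rule-out of Theorem~\ref{t : restricted abstract diagram rule out} with the emanating-word count of Lemma~\ref{l : counting emanating words intrinsic}.

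First I would use the generic hyperbolicity of Theorem~\ref{t : hyperbolic generic}, giving $\delta \leq 4l/\epsilon$ in $\mathcal{G}^{d_t}_{m,l}$, to reduce global QI-embedding failure to the existence of a short geodesic bigon. Concretely: by a standard local-to-global principle in hyperbolic spaces, if the induced map fails to be a quasi-isometric embedding on $A^{(1)}$ then there is an emanating geodesic $\gamma$ in $A^{(1)}$ of length $k = O(l/\epsilon)$ whose endpoints in $K_{d_t}^{(1)}$ are joined by a path of length at most $(1-c)k$ for some $c = c(\epsilon) > 0$. A reduced van Kampen diagram $D \to K_{d_t}$ filling this bigon has $|\partial D| \leq 2k$, and Theorem~\ref{t : generic linear iso} at $d_t$ bounds its area by $|D| = O(k/(\epsilon l)) = O(1/\epsilon^2)$. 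Since $A$ is undistorted in $K_{d_s}$, at least one face of $D$ must bear a relator in $\mathcal{R}^{d_t}_{m,l} \setminus \mathcal{R}^{d_s}_{m,l}$, so the diagram is a genuine $d_t$-level event.

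Next I would encode $D$ as a restricted abstract diagram $(X, r_X, \text{lab})$ in $S$: every boundary edge lying on the image of $\gamma$ is declared restricted and given the corresponding letter of the emanating word of $\gamma$, while the shortcut-side boundary edges are unrestricted. Choosing $c$ small enough forces the shortcut to be less than half of $|\partial X|$, so that $2|r_X^{-1}(1)| \geq |\partial X|$ and Theorem~\ref{t : restricted abstract diagram rule out} yields fillability probability at most $2m(2m-1)^{-\epsilon l}$ per triple $(X, r_X, \text{lab})$. A union bound then multiplies this by three counts: the number of abstract diagram types, at most $P(|D|)\,l^{4|D|}$ by Lemma~\ref{l : counting the diagrams}; the number of emanating-word labellings, controlled by $|\mathbf{E}_{\leq k}|$ from Lemma~\ref{l : counting emanating words intrinsic} and the displayed bound~(\ref{eq : count bound}); and a polynomial in $l$ for the basepoint placement in $K_{d_t}$.

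The main obstacle is the bookkeeping in the last step. The only exponential-in-$l$ contributions are the emanating-word count, whose dominant factor is $(2m-1)^{(2\beta + 40/(d_s H) + \epsilon')k}$, and the rule-out saving $(2m-1)^{-\epsilon l}$. With $\beta = 10^{-7}\epsilon^2$, $d_s = 10^{-7}\epsilon^3$ and $H = 40 \cdot 10^{14}\epsilon^{-5}$, both $2\beta$ and $40/(d_s H)$ are $O(\epsilon^2)$ with small explicit constants, so since $k = O(l/\epsilon)$ the emanating-word exponent is $O(\epsilon l)$ with a coefficient forced to be strictly less than $\epsilon$ by the $10^{-7}$ prefactors. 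Verifying this strict inequality and absorbing the sub-exponential factors $l^{4|D|}$ and $l^{2}k^{40Ck/(d_s l)}$ into the saving is the delicate calculation that dictates the polynomial dependence of $\beta, d_s, \eta, H$ on $\epsilon$, and it is the step where the argument could plausibly fail if the parameters were chosen only slightly less conservatively.
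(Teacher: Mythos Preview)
Your proposal correctly identifies the core mechanism (restricted diagram rule-out plus the emanating-word count, combined via a union bound), and this is exactly how the paper proves that \emph{emanating paths} become $18\epsilon^{-1}l$-local-geodesics in $K_{d_t}$ (its Lemma~\ref{l : emanating rays are local-geodesics} and Corollary~\ref{c : emanating rays are quasi-geodesics}). However, your reduction step has a real gap.

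You claim that failure of $A^{(1)}\to K_{d_t}$ to be a quasi-isometric embedding yields a short \emph{emanating} geodesic with a shortcut in $K_{d_t}$. The local-to-global principle only tells you that some geodesic segment in $A^{(1)}$ of length $O(l/\epsilon)$ has a shortcut; it does not tell you this segment is emanating. By definition an emanating path is a subpath of a geodesic in $A^{(1)}$ issuing from the basepoint $1$, and a geodesic between arbitrary $p,q\in A^{(1)}$ need not be of this form (the $A_{\mathbf a}$ are half-planes, not trees). The entire point of the bound in Lemma~\ref{l : counting emanating words intrinsic} is that it exploits the very special combinatorics of geodesics from $1$ (extension word, then relator arc, then extension word, \dots); you have no comparable control on labels of arbitrary $A^{(1)}$-geodesic segments, so your union bound over $\mathbf{E}_{\leq k}$ does not cover all possible bad diagrams.

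The paper handles this in two stages. First it proves exactly your bigon argument, but only for emanating paths, obtaining that images of emanating paths are $(13/5,4\epsilon^{-1}l)$-quasi-geodesics. Then, in a separate Lemma~\ref{l : quasi-isometric-embedding}, it upgrades this to a full quasi-isometric embedding by looking at the triangle $\Delta_1$ in $K_{d_t}$ with apex $1$ and sides the two (now known to be quasi-geodesic) emanating paths to $u=\pi\varphi(p)$ and $v=\pi\varphi(q)$, comparing it to a geodesic triangle $\Delta_2$ via Morse stability, and bounding the Gromov product $(u,v)_{\bar p}$ at the near-centre. That last bound (Claim~\ref{cl : gromov product bound}) requires a \emph{second} diagram rule-out, this time for a quadrilateral whose restricted boundary is a concatenation $P_1^{-1}\cdot C\cdot P_2$ of two emanating-path segments and a short arc $C$ coming from the low-density ladder structure of $K_{d_s}$; the label count for $C$ uses the $d_s$-relator bound, not the emanating-word bound. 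This second rule-out and the accompanying tripod geometry are the missing ingredients in your sketch.
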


We split the proof into a collection of lemmas themed by subsections. 
For the remainder of this section we assume the hypothesis of Theorem \ref{t : round trees everywhere} and model parameters in the preceding paragraphs of this section.

\subsection{Fillability of Diagrams Along Low-density Paths}

Indispensable to our proofs is a way of estimating the probability of filling diagrams of a particular combinatorial type along subpaths from the lower density complex $K_{d_s}$.
This will allow us to rule out the existence of certain diagrams we come across.

We can think of the choice of the random list of relators $\mathcal{R}^{d_t}_{m,l}$ as happening in two steps: first $\mathcal{R}^{d_s}_{m,l}$ is chosen, then independently $\mathcal{R}^{d_t}_{m,l} \setminus \mathcal{R}^{d_s}_{m,l}$ is chosen.
The following lemma shows certain diagrams, if not fillable by $\mathcal{R}^{d_s}_{m,l}$, are unlikely to be fillable by $\mathcal{R}^{d_t}_{m,l}$.

\begin{lemma}(Fillability of diagrams along paths)\label{l : filling diagrams along paths}
Let $m \geq 2$, $0 < d_s < d_t < 1/2$, $C, C' > 0$ be constants.
For $l \geq 2$ suppose the random presentation $\mathcal{R}^{d_s}_{m,l}$ is chosen and a collection $\mathbf{W}_{d_s,l}$ of path labels in $K_{d_s}$ of length $\leq C' l$, $\lvert \mathbf{W}_{d_s,l} \rvert \leq P(l) (2m-1)^{gl}$
for some $g \geq 0$ constant and $P(l)$ polynomial in $l$.
Also suppose $d = 1/2 - d_s C - g - d_t$ satisfies $d > 0$.

Then, there exists a polynomial $Q(l)$ such that the following event for $\mathcal{R}^{d_t}_{m,l}$ given $\mathcal{R}^{d_s}_{m,l}$ holds with probability $\leq Q(l)(2m-1)^{-d l}$:

There exists $(X, r_X, \text{lab})$ a reduced, connected, contractible abstract diagram restricted in $S$ with,
\begin{itemize}
    \item[(1)] $r_X^{-1}(1) \subset \partial X$ a subpath labelled by a word in $\mathbf{W}_{d_s,l}$,
    \item[(2)] each edge of $X$ is in the boundary of some face,
    \item[(3)] $2 \lvert r_X^{-1}(1) \rvert \geq \lvert \partial X \rvert$, $\lvert X \rvert \leq C$, $\lvert \partial X \rvert \leq C' l$,
    \item[(4)] $(X, r_X, \text{lab})$ is fillable by $\mathcal{R}^{d_t}_{m,l}$ such that $\partial X \setminus r_X^{-1}(1)$ is a geodesic in $K_{d_t}$ and at least one relator of $X$ has label in $\mathcal{R}^{d_t}_{m,l} \setminus \mathcal{R}^{d_s}_{m,l}$.
\end{itemize}

\begin{remark}
We can take $Q(l) = 2^{2C}N(C,l) P(l)$, with $N(C,l)$ given by Lemma \ref{l : counting the diagrams}.
\end{remark}

\begin{proof}
Suppose that a particular $(X, r_X, \text{lab})$ is fillable by $\mathcal{R}^{d_t}_{m,l}$ satisfying $(1)-(4)$.
By Lemma \ref{l : counting the diagrams} there are $N(C,l)$ choices of $X$, $(C' l)^2$ choices for $r_X$ and $P(l)(2m-1)^{gl}$ choices of $\text{lab}$.

Consider a maximal connected subdiagram $D$ of $X \rightarrow K_{d_t}$ with boundary labels of faces contained in $\mathcal{R}^{d_t}_{m,l} \setminus \mathcal{R}^{d_s}_{m,l}$.
As $\lvert X \rvert \leq C$ there are at most $2^C$ choices of faces for $X$, at most $\lvert \mathcal{R}^{d_s}_{m,l} \rvert^C = (2m-1)^{d_s C l }$ choices of boundary labels for these faces and $2^C$ ways to orient the boundary labels around the distinguished faces.
There are also at most $2^C$ positions of where $D$ is as a subdiagram of $X$.
This implies there are at most $2^{2C} (2m-1)^{d_s C l}$ possible diagrams $D$.

\begin{figure}[!ht]
    \centering
    \includegraphics[scale = 0.25]{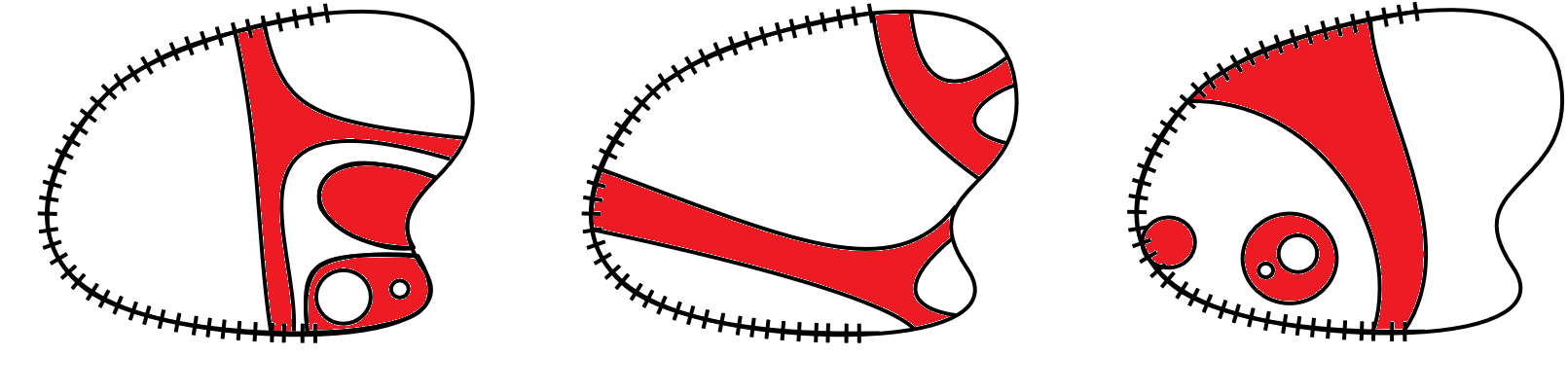}
    \caption{Some possibilities for the location of the diagram $D \subset X$ with the hashed lines representing $r_X^{-1}(1)$.}
    \label{f : cases for diagram}
\end{figure}

All neighbouring faces of $D$ in $X$ are labelled by elements of $\mathcal{R}^{d_s}_{m,l}$, set $\alpha = \partial X \setminus r_X^{-1}(1)$ the geodesic in $K_{d_t}$ and partition $\partial D $ into a collection of maximal, non-trivial subpaths $ \partial D = P_1 \cup \cdots \cup P_n$ where $P_i$ is either a subpath of $\alpha$, $r_X^{-1}(1)$ or has a neighbourhood in $\mathcal{R}^{d_s}_{m,l}$.
We say a path $P \rightarrow X^{(1)}$ has a \emph{neighbourhood in } $\mathcal{R}^{d_s}_{m,l}$ if there exists a reduced connected van-Kampen diagram $D \rightarrow K_{d_s}$ with combinatorial maps $P \rightarrow \partial D \rightarrow D \rightarrow X \rightarrow K_{d_t}$ and $D = \overline{D^{(2)}}$.
Note $X \neq D$ if and only if there exists $i$ such that $P_i$ has a neighbourhood in $\mathcal{R}^{d_s}_{m,l}$, and note $n \leq Cl$.
Let $I_D \subset I$ be the indices such that $P_i \nsubseteq \alpha$.

Let $(Y,r_Y, \text{lab}')$ be a reduced connected abstract diagram restricted in $S$ with $\lvert Y \rvert = \lvert D \rvert$, $\lvert \partial Y \rvert = \lvert \partial D \rvert$ and each edge in the boundary of some face with all faces $l$-gons.
The abstract diagram structure is defined according to $D$.
Identify the paths $i : \partial Y \rightarrow \partial D$ as $1$-complexes and define $r_Y^{-1}(1) = \{ e \in \partial Y : i(e) \in P_j \text{ where } j \in I_D\}$.
This identification also defines the boundary restriction function $\text{lab}'$, by the mapping $D \rightarrow X \rightarrow K_{d_t}$, which is a disjoint collection of concatenations of alternating subpaths of $r_X^{-1}(1)$ and paths in $\partial D$ with a neighbourhood in $\mathcal{R}^{d_s}_{m,l}$.

There are at most
\begin{equation}\label{eq : counting new restricted}
2^{2C}(C' l)^2 P(l) (2m-1)^{(d_s C + g)l }
\end{equation}

\noindent
possible diagrams $(Y,r_Y, \text{lab}')$ by counting over all possible diagrams $D$ and using the assumption on the size of $\lvert \mathbf{W}_{d_s,l} \rvert$ summing over possible $(r_X, \text{lab})$.

\begin{claim}\label{cl : larger labelled boundary}
$\sum_{i \in I_D} \lvert P_i \rvert \geq \sum_{i \in I \setminus I_D} \lvert P_i \rvert $ as $\alpha$ is geodesic in $K_{d_t}$.
\begin{proof}
Since $r_X^{-1}(1) \subset \partial X$ is a subpath we can partition $I = I_1 \sqcup I_2$ such that all $j \in I$ with $P_j \subset r_X^{-1}(1)$ are in $I_1$ and $I_1$ is maximal with respect to containing $P_j$ with a neighbourhood in $\mathcal{R}^{d_s}_{m,l}$ and $\cup_{i \in I_1} P_i \subset \partial D$ is a connected subpath.
Define $P_D \subset \alpha$ as the connected subpath joining the end points of $\cup_{i \in I_1} P_i$ together.
Since $\alpha$ is a geodesic in $K_{d_t}$ we have $\sum_{i \in I_1} \lvert P_i \rvert \geq \lvert P_D \rvert$.

Hence,

$$ \sum_{ i \in I_D} \lvert P_i \rvert \geq  \sum_{i \in I_1} \lvert P_i \rvert \geq \lvert P_D \rvert \geq \sum_{i \in I \setminus I_D} \lvert P_i \rvert.$$
\noindent
as for all $i \in I \setminus I_D$ $P_i \subset \alpha$ and $I_1 \subset I_D$.
\end{proof}
\end{claim}

Claim \ref{cl : larger labelled boundary} implies the hypothesis of Theorem \ref{t : restricted abstract diagram rule out} is satisfied for a given $(Y,r_Y, \text{lab}')$, it is equivalent to $2 \lvert r_Y^{-1}(1) \rvert \geq \lvert \partial Y \rvert $, so summing over possible $(Y,r_Y, \text{lab}')$ using (\ref{eq : counting new restricted})
\begin{align*}
    \mathbb{P}(D \rightarrow K_{d_t} \text{ exists}) &\leq \mathbb{P}((Y,r_Y, \text{lab}') \text{ is fillable by } \mathcal{R}^{d_t}_{m,l} ) \\
    & \leq 2^{2C} P(l) (2m-1)^{-dl}.
\end{align*}

In the above probability estimate we have summed over pairs $(r_X, \text{lab})$ implicitly for a fixed $X$ so to get the main probability estimate we sum over possible $X$ given by Lemma \ref{l : counting the diagrams}.

We have shown the probability of the event occurring is bounded above by $2^{2C}N(C,l) P(l) (2m-1)^{-dl}$.

\end{proof}
\end{lemma}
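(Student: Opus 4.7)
The plan is to union bound over all reduced, restricted abstract diagrams $(X, r_X, \text{lab})$ that satisfy (1)--(3), and for each one estimate the probability that condition (4) is also realised by invoking the restricted diagram rule-out Theorem \ref{t : restricted abstract diagram rule out}. Crucially, $\mathcal{R}^{d_t}_{m,l} \setminus \mathcal{R}^{d_s}_{m,l}$ is chosen independently of $\mathcal{R}^{d_s}_{m,l}$, so after conditioning on $\mathcal{R}^{d_s}_{m,l}$ the fresh relators are still uniform cyclically reduced words. The number of underlying abstract diagrams $X$ is $N(C,l)$ by Lemma \ref{l : counting the diagrams}, which is polynomial in $l$; the restricted subpath $r_X^{-1}(1) \subset \partial X$ has at most $(C'l)^2$ positions, and its label can be chosen in at most $P(l) (2m-1)^{gl}$ ways since it must lie in $\mathbf{W}_{d_s, l}$.

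Now fix such an $(X, r_X, \text{lab})$ and suppose it is fillable as in (4). Extract a maximal connected subdiagram $D \subset X$ every face of which bears a relator in $\mathcal{R}^{d_t}_{m,l} \setminus \mathcal{R}^{d_s}_{m,l}$; such a $D$ exists and is non-empty by (4). There are at most $2^C$ ways to locate $D$ inside $X$, at most $2^C$ ways to designate the old-relator faces of $X$ adjacent to $\partial D$, and at most $(2m-1)^{d_s C l}$ ways to label those adjacent faces by words from $\mathcal{R}^{d_s}_{m,l}$. From $D$ I would construct an auxiliary restricted abstract diagram $(Y, r_Y, \text{lab}')$ with the same underlying abstract structure, in which $r_Y$ marks precisely those boundary edges of $D$ that either sit on $r_X^{-1}(1)$ or have a neighbourhood in $\mathcal{R}^{d_s}_{m,l}$ --- in either case $\text{lab}'$ on such an edge is determined by the data we have already enumerated.

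The essential step is to check the hypothesis $2\lvert r_Y^{-1}(1) \rvert \geq \lvert \partial Y \rvert$ of Theorem \ref{t : restricted abstract diagram rule out}. The unrestricted edges of $\partial Y$ correspond exactly to the portion of $\partial D$ lying on $\alpha := \partial X \setminus r_X^{-1}(1)$, which is geodesic in $K_{d_t}$ by (4). Since $r_X^{-1}(1)$ is a single subpath of $\partial X$, one can partition the indices of the canonical decomposition $\partial D = P_1 \cup \cdots \cup P_n$ into two groups $I_1 \sqcup I_2$ so that the union of $P_i$'s over $I_1$ forms a connected ``detour'' of $\partial D$ with endpoints on $\alpha$ and contains every $P_i$ sitting on $r_X^{-1}(1)$ or bordering an $\mathcal{R}^{d_s}_{m,l}$-face. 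A shortcutting argument then says the detour must be at least as long as the $\alpha$-arc it spans, giving the desired inequality.

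Applying Theorem \ref{t : restricted abstract diagram rule out} conditional on $\mathcal{R}^{d_s}_{m,l}$ bounds the conditional probability that $(Y, r_Y, \text{lab}')$ is fillable by $\mathcal{R}^{d_t}_{m,l}$ by $2m(2m-1)^{(d_t - 1/2) l}$. Multiplying by the enumeration factors ($N(C,l)$, $(C'l)^2$, $2^{2C}$, $(2m-1)^{d_s C l}$ and $P(l)(2m-1)^{gl}$) and collecting exponents of $(2m-1)^l$ yields a bound of the form $Q(l)(2m-1)^{-dl}$ with $d = 1/2 - d_s C - g - d_t$ and $Q$ polynomial in $l$, as required. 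The most delicate point I expect is the partitioning and geodesic-shortcut step: the subdiagram $D$ may meet $r_X^{-1}(1)$ in several disjoint arcs, and care is needed to aggregate them into a single detour whose endpoints sit on $\alpha$ so that the geodesicity contradiction genuinely applies.
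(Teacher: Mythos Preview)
Your proposal is correct and follows essentially the same route as the paper: extract a maximal connected subdiagram $D$ of new-relator faces, turn it into a restricted abstract diagram $(Y,r_Y,\text{lab}')$ whose restricted boundary is the part of $\partial D$ not lying on the geodesic $\alpha$, verify $2\lvert r_Y^{-1}(1)\rvert \ge \lvert \partial Y\rvert$ via a geodesic shortcut argument, apply Theorem~\ref{t : restricted abstract diagram rule out}, and union bound with the same enumeration factors. The one place to tighten is exactly the point you flag: your description has $I_1$ containing \emph{every} $P_i$ on $r_X^{-1}(1)$ or bordering an old-relator face, but in general those pieces need not form a single connected arc of $\partial D$; the paper handles this by taking $I_1$ to be the maximal connected arc of $\partial D$ containing all the $r_X^{-1}(1)$-pieces (these are contiguous since $r_X^{-1}(1)$ is a single subpath of $\partial X$), so that $I_1\subset I_D$ but possibly strictly, and then uses $\sum_{i\in I_D}\lvert P_i\rvert \ge \sum_{i\in I_1}\lvert P_i\rvert \ge \lvert P_D\rvert \ge \sum_{i\notin I_D}\lvert P_i\rvert$.
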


\subsection{Emanating Paths are Quasi-geodesics}

In this subsection we fix $ \varphi : A \rightarrow K_{d_s}$ as our low-density undistorted round tree.
Let $k \geq 1$ and $\gamma \in \Gamma(k)$ be an emanating path for such a round tree.
Recall $\epsilon = 1/2 - d_t$ and consider the path metric $\rho_A$ of $A^{(1)}$.

\begin{lemma}(Local-geodesics)\label{l : emanating rays are local-geodesics}
All paths $\pi^{d_t}_{d_s} \circ \gamma$ are $18 \epsilon^{-1}l$-local-geodesics in $K_{d_t}$ with overwhelming probability in $\mathcal{G}^{d_t}_{m,l}$.

\begin{proof}
Suppose not.
Denote $\pi^{d_t}_{d_s} \circ \gamma$ by $\gamma'$.
Let $P$ be a subpath of $\gamma'$ of minimal length with a geodesic $\alpha$ in $K_{d_t}$ joining the endpoints of $P$, $P^-$ and $P^+$, together of lengths $\lvert \alpha \rvert < \lvert P \rvert < 18 \epsilon^{-1} l$.
We can assume $\alpha \cap \gamma^- = \{P^-, P^+\}$ by minimality of $P$. 
Set $\mathbf{W}_{d_s,l} = \mathbf{E}_{ \leq 36 \epsilon^{-1} l } $ as the set of emanating words, Definition \ref{d : emanating words}, of length at most $36 \epsilon^{-1} l$ for the undistorted round tree $A \rightarrow K_{d_s}$.

Let $D$ be a connected reduced van-Kampen diagram in $K_{d_t}$ bounded by $P \cdot \alpha^{-1}$.
By minimality of $P$ we can assume $D$ is homeomorphic to a disc, hence $D = \overline{D^{(2)}}$ and so each edge is in the boundary of some face.
By Theorem \ref{t : generic linear iso} we may assume we have $\lvert D \rvert \leq 36 \epsilon^{-2}$.
The diagram $D$ gives a reduced connected abstract diagram $(X, r_X, \text{lab})$ restricted in $S$ with $r_X^{-1}(1) = P$ and the label of the restricted boundary path in $\mathbf{W}_{d_s,l}$.

Setting $C = 36 \epsilon^{-2}$, $C' = 36 \epsilon^{-1}$, $g = 4 \cdot 10^{-7} \epsilon(36 + \epsilon^2) < \epsilon/4$, motivated by (\ref{eq : count bound}), and computing $ d_s C + g + d_t < 1/2 - (\epsilon - \epsilon / 4 - 36 \cdot 10^{-7} \epsilon) $ the hypothesis of Lemma \ref{l : filling diagrams along paths} is satisfied with $d > \epsilon - \epsilon / 4 - 36 \cdot 10^{-7} \epsilon > 0$.

We conclude for some polynomial $Q(l)$ the probability that $(X, r_X, \text{lab})$ is fillable by 
$\mathcal{R}^{d_t}_{m,l}$ with $\partial X \setminus r_X^{-1}(1)$ a geodesic in $K_{d_t}$ and at least one face $f$ of $X$ bearing a relator $r \in \mathcal{R}^{d_t}_{m,l} \setminus \mathcal{R}^{d_s}_{m,l}$ is less than $Q(l)(2m-1)^{-dl}$. 

As by assumption $(X,r_X, \text{lab})$ is fillable by $\mathcal{R}^{d_t}_{m,l}$ with overwhelming probability in $\mathcal{G}^{d_t}_{m,l}$ and this implies that $D$ is a van-Kampen diagram in $K_{d_s}$ with overwhelming probability.
Since $\lvert \alpha \rvert < \lvert P \rvert$ we contradict the fact emanating paths, in particular $P$ and $\gamma$, are geodesics in $K_{d_s}$ \cite[Lem.8.13]{Mackay-conf-rand-16}.
We contradict any such $\gamma'$ not being a local-geodesic with probability going to $1$ as $l \to \infty$.
\end{proof}
\end{lemma}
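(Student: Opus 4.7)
The plan is to argue by contradiction. Suppose some emanating path $\gamma$ has a subpath $P \subset \gamma' := \pi^{d_t}_{d_s} \circ \gamma$, of minimal length $|P| < 18\epsilon^{-1}l$, for which there is a geodesic $\alpha$ in $K_{d_t}$ between the endpoints of $P$ with $|\alpha| < |P|$. By minimality $\alpha$ meets $\gamma'$ only at $P^-, P^+$, so there is a reduced disk van-Kampen diagram $D$ in $K_{d_t}$ with boundary cycle $P \cdot \alpha^{-1}$ and with $D = \overline{D^{(2)}}$ (every edge in the boundary of a face). The goal is to show such a $D$ cannot exist with overwhelming probability.

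First I would control the size of $D$ via the generic linear isoperimetric inequality (Theorem~\ref{t : generic linear iso}), taking the slack parameter to be $\epsilon/2$: since $|\partial D| \leq 2|P| < 36\epsilon^{-1}l$, this yields $|D| \leq 36\epsilon^{-2}$. Next I would separate into two cases. If every face of $D$ bears a relator from $\mathcal{R}^{d_s}_{m,l}$, then $D$ is in fact a diagram in $K_{d_s}$; this gives a path in $K_{d_s}$ from $P^-$ to $P^+$ strictly shorter than $P$, contradicting the fact established in \cite[Lem.~8.13]{Mackay-conf-rand-16} (and recovered in the proof of Theorem~\ref{t : build low d round trees}) that emanating paths are geodesics in $K_{d_s}$. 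Otherwise $D$ contains at least one face bearing a relator in $\mathcal{R}^{d_t}_{m,l}\setminus\mathcal{R}^{d_s}_{m,l}$, and I apply the main diagram rule-out of the preceding section.

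To do so, I view $D$ as a restricted abstract diagram $(X, r_X, \text{lab})$ with $r_X^{-1}(1) = P$ carrying its inherited word label, and invoke Lemma~\ref{l : filling diagrams along paths} with $\mathbf{W}_{d_s,l} = \mathbf{E}_{\leq 36\epsilon^{-1}l}$, $C = 36\epsilon^{-2}$, $C' = 36\epsilon^{-1}$, and $g$ dictated by the emanating-word count (\ref{eq : count bound}), concretely $g = 4 \cdot 10^{-7}\epsilon^{2}(36\epsilon^{-1} + \epsilon)$. The key numerical check is that with the chosen parameters $d_s = 10^{-7}\epsilon^3$ etc., the quantity $d := 1/2 - d_s C - g - d_t$ is strictly positive: one sees $d_s C = 36 \cdot 10^{-7}\epsilon$ and $g < \epsilon/4$, so $d > \epsilon - \epsilon/4 - 36 \cdot 10^{-7}\epsilon > 0$. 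Lemma~\ref{l : filling diagrams along paths} then gives probability at most $Q(l)(2m-1)^{-dl}$ of the event occurring for a fixed choice of $(P^-, P^+)$; a union bound over the polynomially many possible choices of subpath inside $A$ makes the total probability tend to zero as $l \to \infty$, contradicting our assumption.

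The principal obstacle is organizing the bookkeeping so that the label of $P$ falls within the emanating-word set $\mathbf{E}_{\leq 36\epsilon^{-1}l}$ of controlled size: the subpath $P$ is a sub-arc of an emanating path rather than being emanating itself, and the slack factor of two in the length bound ($36\epsilon^{-1}l$ rather than $18\epsilon^{-1}l$) is included precisely to absorb the choice of where along $\gamma$ to cut. A secondary but essential obstacle is verifying that the small numerical choices of $d_s, \beta, \eta$ made at the outset of Section~\ref{s : build round trees all densities} really deliver $d > 0$ after all constants are substituted; this is routine but has to be tracked with care.
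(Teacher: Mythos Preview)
Your proposal is correct and follows essentially the same argument as the paper's proof: contradict local-geodesicity, bound the resulting disk diagram via the linear isoperimetric inequality, apply Lemma~\ref{l : filling diagrams along paths} with the emanating-word set as $\mathbf{W}_{d_s,l}$, and fall back on the fact that emanating paths are genuine geodesics in $K_{d_s}$ when all faces come from $\mathcal{R}^{d_s}_{m,l}$.

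Two small remarks. First, your final union bound over ``polynomially many possible choices of subpath'' is both unnecessary and mis-quantified: Lemma~\ref{l : filling diagrams along paths} is already stated as an existence event ranging over all labels in $\mathbf{W}_{d_s,l}$, so no further union is needed (and there are certainly not only polynomially many subpaths in the infinite complex $A$). Second, your stated ``principal obstacle'' is not one: by Definition~\ref{d : emanating paths} an emanating path is any subpath of a geodesic in $A^{(1)}$ emanating from $1$, so a sub-arc $P$ of an emanating path is itself emanating and its label lies in $\mathbf{E}_{\leq 18\epsilon^{-1}l} \subset \mathbf{E}_{\leq 36\epsilon^{-1}l}$ directly.
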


From this and Theorem \ref{t : hyperbolic generic} we have the following consequence by \cite[III.H.Thm.1.13]{Bridson-Haefliger-non-positive-curv-geom}.

\begin{corollary}(Quasi-geodesics)\label{c : emanating rays are quasi-geodesics}
All paths $\pi^{d_t}_{d_s} \circ \gamma$ are embedded $(13/5, 4\epsilon^{-1}l)$-quasi-geodesics in $K_{d_t}$ with overwhelming probability in $\mathcal{G}^{d_t}_{m,l}$.
\begin{proof}
They are $(13/5, 4\epsilon^{-1}l)$-quasi-geodesics by \cite[III.H.Thm.1.13]{Bridson-Haefliger-non-positive-curv-geom}.
Denote $\pi^{d_t}_{d_s} \circ \gamma$ by $\gamma'$.
Suppose $\gamma'$ self-intersects itself at point $Q$ and let $P$ be the subpath of $\gamma'$ that forms the loop starting and ending at $Q$.
Let $Q_1$ and $Q_2$ denote the endpoints, in $A^{(1)}$, of the path $P'$ satisfying $\pi^{d_t}_{d_s} \circ \varphi (P') = P$.
$P'$ is an emanating path, Definition \ref{d : emanating paths}, and so is geodesic.
If $\rho_A( Q_1, Q_2) = \lvert P' \rvert \leq 18 \epsilon^{-1} l $ by Lemma \ref{l : emanating rays are local-geodesics} we have contradicted $\gamma'$ being an $18\epsilon^{-1}l$-local geodesic.
Otherwise $\rho_A( Q_1, Q_2) = \lvert P' \rvert > 18 \epsilon^{-1} l $ but as $\gamma'$ is a $(13/5, 4 \epsilon^{-1} l)$-quasi-geodesic we have $\rho_t(Q,Q) > \frac{5}{13} \cdot 18 \epsilon^{-1} l - 4\epsilon^{-1} l =  \frac{38}{13} \epsilon^{-1} l > 0$ a contradiction.

\end{proof}
\end{corollary}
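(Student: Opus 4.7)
The plan is to deduce this corollary in two steps, using the local-geodesic property from Lemma \ref{l : emanating rays are local-geodesics} together with the hyperbolicity of $K_{d_t}$.

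First, I would obtain the quasi-geodesic constants. By Theorem \ref{t : hyperbolic generic}, $K_{d_t}$ is $\delta$-hyperbolic with $\delta \leq 4l/(1 - 2d_t) = 4l \epsilon^{-1}$ with overwhelming probability in $\mathcal{G}^{d_t}_{m,l}$. Lemma \ref{l : emanating rays are local-geodesics} says each $\gamma' = \pi^{d_t}_{d_s} \circ \gamma$ is an $18 \epsilon^{-1} l$-local geodesic with overwhelming probability. Since $18 \epsilon^{-1} l > 8 \delta$, the standard Bridson--Haefliger result \cite[III.H.Thm.1.13]{Bridson-Haefliger-non-positive-curv-geom} applies and produces explicit quasi-geodesic constants; chasing the constants in the statement of that theorem yields that $\gamma'$ is a $(13/5, 4 \epsilon^{-1} l)$-quasi-geodesic.

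Next I would rule out self-intersection. Suppose $\gamma'$ fails to be embedded, so there is a self-intersection point $Q$ and a nontrivial loop subpath $P$ of $\gamma'$ based at $Q$. Lift $P$ to a subpath $P' \subset A^{(1)}$ with endpoints $Q_1, Q_2$ so that $\pi^{d_t}_{d_s} \circ \varphi(P') = P$. Since $P'$ is itself an emanating path (Definition \ref{d : emanating paths}), it is a geodesic in $A^{(1)}$, so $\rho_A(Q_1, Q_2) = |P'|$. I would split into two cases according to the length of $P'$. If $|P'| \leq 18 \epsilon^{-1} l$, then $P$ is a loop of length $\leq 18 \epsilon^{-1} l$ in $K_{d_t}$ whose two endpoints coincide, directly contradicting that $\gamma'$ is an $18 \epsilon^{-1} l$-local geodesic (a local geodesic cannot return to its starting point in such a short window). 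Otherwise $|P'| > 18 \epsilon^{-1} l$, and applying the $(13/5, 4 \epsilon^{-1} l)$-quasi-geodesic inequality to the endpoints of $P'$ gives
\[
\rho_t(Q,Q) \;\geq\; \tfrac{5}{13} |P'| \;-\; 4 \epsilon^{-1} l \;>\; \tfrac{5}{13} \cdot 18 \epsilon^{-1} l - 4 \epsilon^{-1} l \;=\; \tfrac{38}{13} \epsilon^{-1} l \;>\; 0,
\]
contradicting $Q = Q$. Both cases lead to contradictions, so $\gamma'$ is embedded with overwhelming probability.

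The only genuinely nontrivial input is the invocation of \cite[III.H.Thm.1.13]{Bridson-Haefliger-non-positive-curv-geom}, which requires that the local-geodesic length scale $18 \epsilon^{-1} l$ strictly dominate the hyperbolicity scale $\delta \leq 4 \epsilon^{-1} l$; the constants in Lemma \ref{l : emanating rays are local-geodesics} were arranged precisely so that this comparison holds with room to spare, producing the stated $(13/5, 4\epsilon^{-1} l)$ constants. Everything else is bookkeeping of a small number of linear-in-$l$ constants.
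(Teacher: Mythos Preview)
Your argument is essentially identical to the paper's: both first invoke \cite[III.H.Thm.1.13]{Bridson-Haefliger-non-positive-curv-geom} to pass from the $18\epsilon^{-1}l$-local-geodesic property to the $(13/5,4\epsilon^{-1}l)$-quasi-geodesic property, and then rule out self-intersection by lifting a putative loop to an emanating subpath $P'\subset A^{(1)}$ and splitting on whether $|P'|\le 18\epsilon^{-1}l$ (contradicting local-geodesicity) or $|P'|>18\epsilon^{-1}l$ (contradicting the quasi-geodesic inequality). One arithmetic slip: since $1-2d_t=2\epsilon$, the hyperbolicity bound is $\delta\le 4l/(1-2d_t)=2\epsilon^{-1}l$, not $4\epsilon^{-1}l$; with your stated value the hypothesis $18\epsilon^{-1}l>8\delta$ would fail, whereas the correct value gives $18>16$ and produces exactly the constants $(k+4\delta)/(k-4\delta)=26/10=13/5$ and $2\delta=4\epsilon^{-1}l$.
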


\subsection{Quasi-isometric Embedding}

Up to this point we have shown all emanating paths are quasi-geodesics.
We will use this fact in showing the combinatorial round tree $A$ is undistorted.

\begin{lemma}(Restricts to quasi-isometry)\label{l : quasi-isometric-embedding}
The compositions of mappings 
$$(A^{(1)}, \rho_A) \rightarrow (K_{d_s}, \rho_s) \rightarrow (K_{d_t}, \rho_t)$$ are quasi-isometric embeddings with overwhelming probability in $\mathcal{G}^{d_s}_{m,l}$ and $\mathcal{G}^{d_t}_{m,l}$.

\begin{proof}
By Theorem \ref{t : build low d round trees} the first mapping $\varphi : (A^{(1)}, \rho_A) \rightarrow (K_{d_s}, \rho_s)$ is a $(\lambda, k)$-quasi-isometric embedding for some $\lambda \geq 1$, $k \geq 0$.

Take $p, q \in A$.
Let $\gamma_{1,p} \in \Gamma(\rho_A(1,p))$ and $ \gamma_{1,q} \in \Gamma(\rho_A(1,q))$ be emanating paths of $A$ joining $1$ to $p$ and $q$ respectively.
We have that $ \varphi \circ \gamma_{1, \cdot}$ is a geodesic in $K_{d_s}$ by \cite[Lem. 8.13]{Mackay-conf-rand-16}.

Consider the geodesic triangle $\Delta$ in $K_{d_s}$ with vertices $1,\varphi(p), \varphi(q)$ and sides $\varphi \circ \gamma_{1,p}$, $\varphi \circ \gamma_{1,q}$ and a geodesic $[\varphi(p), \varphi(q)]$ for the other.
Let $L$ be a reduced diagram for $\Delta$.
Let $R_{p,q}$ be the $2$-cell or $1$-cell that meets both $\varphi \circ \gamma_{1,p}$ and $\varphi \circ \gamma_{1,q}$ and is furthest from $1$ in $L$.
In a similar way we define the cells $R_{1,p}$ and $R_{1,q}$ and consider the subdiagram $L' \subset L$ containing, $R_{p,q}, R_{1,p}$ and $R_{1,q}$ on the periphery.
Apart from at most three $2$-cells of $L \rightarrow K_{d_s}$ all intersect along $\lvert \partial L'  \rvert$ for at most $l/2$.
Since, for a $2$-cell $R \subset K_{d_s}$ and geodesic $\gamma \subset K_{d_s}$, when $d_s = 10^{-7} \epsilon^{3} < 1/4$, $R \cap \gamma$ is connected (Lemma \ref{l : cells intersecting along geodesics}) and if not we will contradict $\Delta$ being a geodesic triangle.
Hence by Theorem \ref{t : generic linear iso} for suitable $\epsilon' > 0$,
$$
    (1- \frac{52}{256}\cdot 10^{-7} - \epsilon')\lvert L' \rvert l < (1-2d_s - \epsilon')\lvert L' \rvert l
    \leq \lvert \partial L' \rvert \leq 3l + (\lvert L' \rvert - 3) \cdot \frac{1}{2}l = \frac{1}{2} \lvert L' \rvert l + \frac{3}{2} l,
$$
and so $\lvert L' \rvert < 4$, thus $\lvert L' \rvert \leq 3$.

Set $x$ as the point furthest along $\varphi \circ \gamma_{1,p}$ contained in $\partial R_{p,q}$, denote by $y$ similarly for $q$.
In the proof of \cite[Lem.8.15]{Mackay-conf-rand-16} it was shown $\rho_s(x,y) \leq 3l $, actually $\leq l$ as it is in the boundary of a face, thus $\rho_t(\pi(x), \pi (y )) \leq l$.
Consider the subpath $C'$ of $\partial R_{p,q}$ joining $x$ to $y$ and denote its image under $\pi$ in $K_{d_t}$ as $C$.
There are at most $2l(2m-1)^{d_s l}$ labels for $C$ since $R_{p,q}$ bounds a ladder from $1$ to $C'$ in $K_{d_s}$ by Lemma \ref{l : finding a ladder} as $d_s < 1/6$.

Denote by $\alpha_{1,p}$ and $\alpha_{1,q}$ the images of $\varphi \circ \gamma_{1,p} $ and $ \varphi \circ \gamma_{1,q} $ in $K_{d_t}$ which are $(13/5, 4 \epsilon^{-1} l)$-quasi-geodesics by Corollary \ref{c : emanating rays are quasi-geodesics}.
Set $u = \pi \circ \varphi (p)$, $v = \pi \circ \varphi (q)$ for notation reasons and $[u,v]$ a geodesic in $K_{d_t}$.
Denote this triangle of paths $\Delta_1$ with vertices $1, u, v$ and sides quasi-geodesics $\alpha_{1,p}, \alpha_{1,q}$ and geodesic $[u,v]$.

By the stability of quasi-geodesics, \cite[III.H.Thm.1.7]{Bridson-Haefliger-non-positive-curv-geom}, there exists $R = R(d_t, l) \geq 0$ such that a geodesic triangle $\Delta_2$ with one geodesic edge the previous geodesic $[u,v]$ and others $e_{1,u}, e_{1,v}$ satisfies $\Delta_2 \subset \mathcal{N}_{R}(\Delta_1)$ and $ \Delta_1 \subset \mathcal{N}_{R}(\Delta_2)$.
Using \cite[Thm.1.1]{Gouezel-Shchur-corrected-quantitative-version-of-the-morse-lemma} we can take $R = 4 \cdot 10^3 \epsilon^{-1} l $ which we will do later.
Let $\chi :\Delta_2 \rightarrow T$ be a tripod tree approximation for $\Delta_2$ with center $O \in T$.

Label $x' \in e_{1,u}$ a point such that $\rho_t(x', \pi(x) ) \leq R$ and $y' \in e_{1,v}$ similarly for $v$. 
Set $z_1' \in e_{1,u}$ and $z_2' \in e_{1,v}$ as the points in $ \chi^{-1}(O) $ satisfying $ \rho_t(z_1', z_2') \leq 4\delta_t \leq 8\epsilon^{-1}l $.
Recall $\delta_t \leq 2\epsilon^{-1} l$ by Theorem \ref{t : hyperbolic generic}.
We denote $z_1 \in \alpha_{1,p}$ and $z_2 \in \alpha_2$ as points $R$-close to $z_1'$ and $ z_2'$ respectively.

See Figure \ref{f : final configuration} to see the configuration clearly.

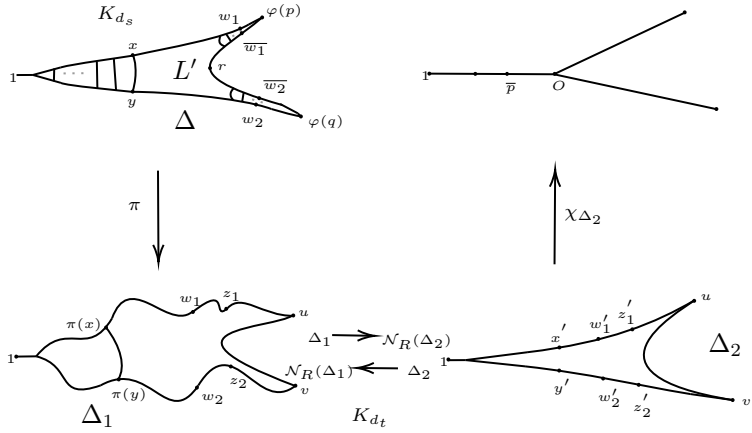
\begin{figure}[!ht]
\centering
\tikzset{every picture/.style={line width=0.75pt}} 

\begin{tikzpicture}[x=0.6pt,y=0.6pt,yscale=-1,xscale=1]

\draw    (120.22,121) -- (120.51,176.77) ;
\draw [shift={(120.52,178.77)}, rotate = 269.7] [color={rgb, 255:red, 0; green, 0; blue, 0 }  ][line width=0.75]    (10.93,-3.29) .. controls (6.95,-1.4) and (3.31,-0.3) .. (0,0) .. controls (3.31,0.3) and (6.95,1.4) .. (10.93,3.29)   ;
\draw    (271.5,245.75) -- (247.33,245.52) ;
\draw [shift={(245.33,245.5)}, rotate = 0.55] [color={rgb, 255:red, 0; green, 0; blue, 0 }  ][line width=0.75]    (10.93,-3.29) .. controls (6.95,-1.4) and (3.31,-0.3) .. (0,0) .. controls (3.31,0.3) and (6.95,1.4) .. (10.93,3.29)   ;
\draw    (370.2,180.1) -- (370.59,122.2) ;
\draw [shift={(370.6,120.2)}, rotate = 90.38] [color={rgb, 255:red, 0; green, 0; blue, 0 }  ][line width=0.75]    (10.93,-3.29) .. controls (6.95,-1.4) and (3.31,-0.3) .. (0,0) .. controls (3.31,0.3) and (6.95,1.4) .. (10.93,3.29)   ;
\draw    (291.2,59.8) -- (369.8,60.2) ;
\draw    (369.8,60.2) -- (452.2,21.4) ;
\draw    (369.8,60.2) -- (472.2,82.2) ;
\draw    (303.14,240.59) -- (314.15,240.54) ;
\draw    (314.15,240.54) -- (352.9,235.61) ;
\draw    (314.15,240.54) -- (351.64,244.49) ;
\draw    (455.76,261.95) -- (482.5,265.75) ;
\draw    (352.9,235.61) .. controls (404.89,229.36) and (434,217) .. (458.53,202.75) ;
\draw    (351.64,244.49) .. controls (392.45,248.98) and (420.39,256.12) .. (455.76,261.95) ;
\draw    (31.04,60.69) -- (41.38,60.64) ;
\draw    (41.38,60.64) -- (58.92,55.88) ;
\draw    (41.38,60.64) -- (59,65.63) ;
\draw    (197.22,78.94) -- (208.4,84.85) ;
\draw    (177.5,28.75) -- (185.9,24.35) ;
\draw    (185.9,24.35) .. controls (157.5,48.75) and (123.25,58.38) .. (198.56,79.61) ;
\draw    (208.4,84.85) -- (210.33,86.83) ;
\draw    (104.43,48.43) .. controls (125.86,43.57) and (162.5,37.75) .. (177.5,28.75) ;
\draw    (103.86,71.29) .. controls (123.33,71.21) and (171.94,74.82) .. (191.75,82.13) ;
\draw    (58.92,55.88) .. controls (67.67,53.5) and (95.25,49.96) .. (104.43,48.43) ;
\draw    (59,65.63) .. controls (62.27,67.44) and (96.43,70.43) .. (103.86,71.29) ;
\draw    (104.43,48.43) .. controls (105.55,49.91) and (107.67,59.83) .. (104.25,71.04) ;
\draw    (191.75,82.13) .. controls (197.08,83.54) and (202.25,85.38) .. (210.33,86.83) ;
\draw    (30.84,238.33) -- (43.46,238.16) ;
\draw  [fill={rgb, 255:red, 0; green, 0; blue, 0 }  ,fill opacity=1 ] (30.26,238.33) .. controls (30.26,237.99) and (30.52,237.71) .. (30.84,237.71) .. controls (31.16,237.71) and (31.42,237.99) .. (31.42,238.33) .. controls (31.42,238.67) and (31.16,238.95) .. (30.84,238.95) .. controls (30.52,238.95) and (30.26,238.67) .. (30.26,238.33) -- cycle ;
\draw    (43.46,238.16) .. controls (62.91,214.22) and (75.53,240.42) .. (91.23,214.22) ;
\draw    (164.16,207.67) .. controls (173.4,197.85) and (186.32,212.91) .. (205.4,212.58) ;
\draw    (43.46,238.16) .. controls (67.53,244.35) and (54.91,266.62) .. (84.15,256.46) ;
\draw    (84.15,256.46) .. controls (115.23,239.11) and (126,285.61) .. (145.7,256.46) ;
\draw    (145.7,256.46) .. controls (168.47,219.79) and (182.63,276.11) .. (207.25,256.46) ;
\draw    (142,210.29) .. controls (166.62,190.64) and (153.7,217.17) .. (164.16,207.67) ;
\draw    (87.22,219.63) .. controls (97.07,232.23) and (99.84,245) .. (95.23,252.54) ;
\draw  [fill={rgb, 255:red, 0; green, 0; blue, 0 }  ,fill opacity=1 ] (204.94,212.39) .. controls (204.94,212.05) and (205.2,211.77) .. (205.52,211.77) .. controls (205.84,211.77) and (206.1,212.05) .. (206.1,212.39) .. controls (206.1,212.73) and (205.84,213.01) .. (205.52,213.01) .. controls (205.2,213.01) and (204.94,212.73) .. (204.94,212.39) -- cycle ;
\draw  [fill={rgb, 255:red, 0; green, 0; blue, 0 }  ,fill opacity=1 ] (94.88,252.74) .. controls (94.88,252.39) and (95.14,252.12) .. (95.47,252.12) .. controls (95.79,252.12) and (96.05,252.39) .. (96.05,252.74) .. controls (96.05,253.08) and (95.79,253.35) .. (95.47,253.35) .. controls (95.14,253.35) and (94.88,253.08) .. (94.88,252.74) -- cycle ;
\draw  [fill={rgb, 255:red, 0; green, 0; blue, 0 }  ,fill opacity=1 ] (86.88,219.99) .. controls (86.88,219.65) and (87.14,219.37) .. (87.46,219.37) .. controls (87.79,219.37) and (88.05,219.65) .. (88.05,219.99) .. controls (88.05,220.33) and (87.79,220.61) .. (87.46,220.61) .. controls (87.14,220.61) and (86.88,220.33) .. (86.88,219.99) -- cycle ;
\draw  [fill={rgb, 255:red, 0; green, 0; blue, 0 }  ,fill opacity=1 ] (165.51,244.57) .. controls (165.51,244.22) and (165.77,243.95) .. (166.09,243.95) .. controls (166.41,243.95) and (166.67,244.22) .. (166.67,244.57) .. controls (166.67,244.91) and (166.41,245.18) .. (166.09,245.18) .. controls (165.77,245.18) and (165.51,244.91) .. (165.51,244.57) -- cycle ;
\draw  [fill={rgb, 255:red, 0; green, 0; blue, 0 }  ,fill opacity=1 ] (162.66,208.17) .. controls (162.66,207.82) and (162.92,207.55) .. (163.24,207.55) .. controls (163.57,207.55) and (163.83,207.82) .. (163.83,208.17) .. controls (163.83,208.51) and (163.57,208.78) .. (163.24,208.78) .. controls (162.92,208.78) and (162.66,208.51) .. (162.66,208.17) -- cycle ;
\draw  [fill={rgb, 255:red, 0; green, 0; blue, 0 }  ,fill opacity=1 ] (206.29,256.66) .. controls (206.29,256.32) and (206.55,256.05) .. (206.87,256.05) .. controls (207.19,256.05) and (207.45,256.32) .. (207.45,256.66) .. controls (207.45,257.01) and (207.19,257.28) .. (206.87,257.28) .. controls (206.55,257.28) and (206.29,257.01) .. (206.29,256.66) -- cycle ;
\draw    (204.94,212.39) .. controls (193.22,220.82) and (110,220.25) .. (206.29,256.66) ;
\draw    (458.53,202.75) .. controls (446.81,211.18) and (380.5,249.25) .. (482.5,265.75) ;
\draw    (92.4,50.6) -- (93.8,70.6) ;
\draw    (81.67,52.17) -- (82,69.17) ;
\draw    (54.8,56.5) -- (54.6,64.7) ;
\draw [color={rgb, 255:red, 155; green, 155; blue, 155 }  ,draw opacity=1 ] [dash pattern={on 0.84pt off 2.51pt}]  (60.67,59.67) -- (75.67,59.5) ;
\draw  [fill={rgb, 255:red, 0; green, 0; blue, 0 }  ,fill opacity=1 ] (471.69,82.14) .. controls (471.69,81.8) and (471.95,81.52) .. (472.27,81.52) .. controls (472.59,81.52) and (472.85,81.8) .. (472.85,82.14) .. controls (472.85,82.48) and (472.59,82.76) .. (472.27,82.76) .. controls (471.95,82.76) and (471.69,82.48) .. (471.69,82.14) -- cycle ;
\draw  [fill={rgb, 255:red, 0; green, 0; blue, 0 }  ,fill opacity=1 ] (302.69,240.14) .. controls (302.69,239.8) and (302.95,239.52) .. (303.27,239.52) .. controls (303.59,239.52) and (303.85,239.8) .. (303.85,240.14) .. controls (303.85,240.48) and (303.59,240.76) .. (303.27,240.76) .. controls (302.95,240.76) and (302.69,240.48) .. (302.69,240.14) -- cycle ;
\draw  [fill={rgb, 255:red, 0; green, 0; blue, 0 }  ,fill opacity=1 ] (290.69,59.81) .. controls (290.69,59.47) and (290.95,59.19) .. (291.27,59.19) .. controls (291.59,59.19) and (291.85,59.47) .. (291.85,59.81) .. controls (291.85,60.15) and (291.59,60.43) .. (291.27,60.43) .. controls (290.95,60.43) and (290.69,60.15) .. (290.69,59.81) -- cycle ;
\draw  [fill={rgb, 255:red, 0; green, 0; blue, 0 }  ,fill opacity=1 ] (369.69,59.95) .. controls (369.69,59.61) and (369.95,59.33) .. (370.27,59.33) .. controls (370.59,59.33) and (370.85,59.61) .. (370.85,59.95) .. controls (370.85,60.29) and (370.59,60.57) .. (370.27,60.57) .. controls (369.95,60.57) and (369.69,60.29) .. (369.69,59.95) -- cycle ;
\draw  [fill={rgb, 255:red, 0; green, 0; blue, 0 }  ,fill opacity=1 ] (339.69,59.95) .. controls (339.69,59.61) and (339.95,59.33) .. (340.27,59.33) .. controls (340.59,59.33) and (340.85,59.61) .. (340.85,59.95) .. controls (340.85,60.29) and (340.59,60.57) .. (340.27,60.57) .. controls (339.95,60.57) and (339.69,60.29) .. (339.69,59.95) -- cycle ;
\draw  [fill={rgb, 255:red, 0; green, 0; blue, 0 }  ,fill opacity=1 ] (319.69,59.95) .. controls (319.69,59.61) and (319.95,59.33) .. (320.27,59.33) .. controls (320.59,59.33) and (320.85,59.61) .. (320.85,59.95) .. controls (320.85,60.29) and (320.59,60.57) .. (320.27,60.57) .. controls (319.95,60.57) and (319.69,60.29) .. (319.69,59.95) -- cycle ;
\draw  [fill={rgb, 255:red, 0; green, 0; blue, 0 }  ,fill opacity=1 ] (418.69,221.14) .. controls (418.69,220.8) and (418.95,220.52) .. (419.27,220.52) .. controls (419.59,220.52) and (419.85,220.8) .. (419.85,221.14) .. controls (419.85,221.48) and (419.59,221.76) .. (419.27,221.76) .. controls (418.95,221.76) and (418.69,221.48) .. (418.69,221.14) -- cycle ;
\draw  [fill={rgb, 255:red, 0; green, 0; blue, 0 }  ,fill opacity=1 ] (422.69,256.14) .. controls (422.69,255.8) and (422.95,255.52) .. (423.27,255.52) .. controls (423.59,255.52) and (423.85,255.8) .. (423.85,256.14) .. controls (423.85,256.48) and (423.59,256.76) .. (423.27,256.76) .. controls (422.95,256.76) and (422.69,256.48) .. (422.69,256.14) -- cycle ;
\draw  [fill={rgb, 255:red, 0; green, 0; blue, 0 }  ,fill opacity=1 ] (372.69,232.57) .. controls (372.69,232.23) and (372.95,231.95) .. (373.27,231.95) .. controls (373.59,231.95) and (373.85,232.23) .. (373.85,232.57) .. controls (373.85,232.91) and (373.59,233.19) .. (373.27,233.19) .. controls (372.95,233.19) and (372.69,232.91) .. (372.69,232.57) -- cycle ;
\draw  [fill={rgb, 255:red, 0; green, 0; blue, 0 }  ,fill opacity=1 ] (372.57,247.14) .. controls (372.57,246.8) and (372.83,246.52) .. (373.15,246.52) .. controls (373.47,246.52) and (373.73,246.8) .. (373.73,247.14) .. controls (373.73,247.48) and (373.47,247.76) .. (373.15,247.76) .. controls (372.83,247.76) and (372.57,247.48) .. (372.57,247.14) -- cycle ;
\draw  [fill={rgb, 255:red, 0; green, 0; blue, 0 }  ,fill opacity=1 ] (103.69,71.14) .. controls (103.69,70.8) and (103.95,70.52) .. (104.27,70.52) .. controls (104.59,70.52) and (104.85,70.8) .. (104.85,71.14) .. controls (104.85,71.48) and (104.59,71.76) .. (104.27,71.76) .. controls (103.95,71.76) and (103.69,71.48) .. (103.69,71.14) -- cycle ;
\draw  [fill={rgb, 255:red, 0; green, 0; blue, 0 }  ,fill opacity=1 ] (103.69,48.14) .. controls (103.69,47.8) and (103.95,47.52) .. (104.27,47.52) .. controls (104.59,47.52) and (104.85,47.8) .. (104.85,48.14) .. controls (104.85,48.48) and (104.59,48.76) .. (104.27,48.76) .. controls (103.95,48.76) and (103.69,48.48) .. (103.69,48.14) -- cycle ;
\draw  [fill={rgb, 255:red, 0; green, 0; blue, 0 }  ,fill opacity=1 ] (209.75,86.83) .. controls (209.75,86.49) and (210.01,86.21) .. (210.33,86.21) .. controls (210.65,86.21) and (210.91,86.49) .. (210.91,86.83) .. controls (210.91,87.17) and (210.65,87.45) .. (210.33,87.45) .. controls (210.01,87.45) and (209.75,87.17) .. (209.75,86.83) -- cycle ;
\draw  [fill={rgb, 255:red, 0; green, 0; blue, 0 }  ,fill opacity=1 ] (185.32,24.35) .. controls (185.32,24.01) and (185.58,23.73) .. (185.9,23.73) .. controls (186.22,23.73) and (186.48,24.01) .. (186.48,24.35) .. controls (186.48,24.69) and (186.22,24.97) .. (185.9,24.97) .. controls (185.58,24.97) and (185.32,24.69) .. (185.32,24.35) -- cycle ;
\draw  [fill={rgb, 255:red, 0; green, 0; blue, 0 }  ,fill opacity=1 ] (457.69,203.14) .. controls (457.69,202.8) and (457.95,202.52) .. (458.27,202.52) .. controls (458.59,202.52) and (458.85,202.8) .. (458.85,203.14) .. controls (458.85,203.48) and (458.59,203.76) .. (458.27,203.76) .. controls (457.95,203.76) and (457.69,203.48) .. (457.69,203.14) -- cycle ;
\draw  [fill={rgb, 255:red, 0; green, 0; blue, 0 }  ,fill opacity=1 ] (481.92,265.75) .. controls (481.92,265.41) and (482.18,265.13) .. (482.5,265.13) .. controls (482.82,265.13) and (483.08,265.41) .. (483.08,265.75) .. controls (483.08,266.09) and (482.82,266.37) .. (482.5,266.37) .. controls (482.18,266.37) and (481.92,266.09) .. (481.92,265.75) -- cycle ;
\draw  [fill={rgb, 255:red, 0; green, 0; blue, 0 }  ,fill opacity=1 ] (451.69,21.14) .. controls (451.69,20.8) and (451.95,20.52) .. (452.27,20.52) .. controls (452.59,20.52) and (452.85,20.8) .. (452.85,21.14) .. controls (452.85,21.48) and (452.59,21.76) .. (452.27,21.76) .. controls (451.95,21.76) and (451.69,21.48) .. (451.69,21.14) -- cycle ;
\draw    (230,224.75) -- (255,225.21) ;
\draw [shift={(257,225.25)}, rotate = 181.06] [color={rgb, 255:red, 0; green, 0; blue, 0 }  ][line width=0.75]    (10.93,-3.29) .. controls (6.95,-1.4) and (3.31,-0.3) .. (0,0) .. controls (3.31,0.3) and (6.95,1.4) .. (10.93,3.29)   ;
\draw    (91.23,214.22) .. controls (96.54,199.69) and (104.57,200.61) .. (113.2,204.29) .. controls (121.82,207.98) and (133.48,216.78) .. (141.66,210.17) ;
\draw    (168.25,68.88) .. controls (165.58,70.93) and (165,74.13) .. (167.89,76.61) ;
\draw    (159.44,36.04) .. controls (158.89,38.83) and (157.7,41.03) .. (162.5,42.63) ;
\draw    (174.25,71.63) -- (173.33,77.28) ;
\draw    (163.18,34.77) -- (166.5,39.88) ;
\draw [color={rgb, 255:red, 155; green, 155; blue, 155 }  ,draw opacity=1 ] [dash pattern={on 0.84pt off 2.51pt}]  (167.11,36.33) -- (175.5,31.13) ;
\draw [color={rgb, 255:red, 155; green, 155; blue, 155 }  ,draw opacity=1 ] [dash pattern={on 0.84pt off 2.51pt}]  (179.56,76.06) -- (189.22,78.94) ;
\draw  [fill={rgb, 255:red, 0; green, 0; blue, 0 }  ,fill opacity=1 ] (152.44,56.31) .. controls (152.44,55.97) and (152.7,55.69) .. (153.02,55.69) .. controls (153.34,55.69) and (153.6,55.97) .. (153.6,56.31) .. controls (153.6,56.65) and (153.34,56.93) .. (153.02,56.93) .. controls (152.7,56.93) and (152.44,56.65) .. (152.44,56.31) -- cycle ;
\draw  [fill={rgb, 255:red, 0; green, 0; blue, 0 }  ,fill opacity=1 ] (183.44,75.31) .. controls (183.44,74.97) and (183.7,74.69) .. (184.02,74.69) .. controls (184.34,74.69) and (184.6,74.97) .. (184.6,75.31) .. controls (184.6,75.65) and (184.34,75.93) .. (184.02,75.93) .. controls (183.7,75.93) and (183.44,75.65) .. (183.44,75.31) -- cycle ;
\draw  [fill={rgb, 255:red, 0; green, 0; blue, 0 }  ,fill opacity=1 ] (181.44,79.31) .. controls (181.44,78.97) and (181.7,78.69) .. (182.02,78.69) .. controls (182.34,78.69) and (182.6,78.97) .. (182.6,79.31) .. controls (182.6,79.65) and (182.34,79.93) .. (182.02,79.93) .. controls (181.7,79.93) and (181.44,79.65) .. (181.44,79.31) -- cycle ;
\draw  [fill={rgb, 255:red, 0; green, 0; blue, 0 }  ,fill opacity=1 ] (172.44,34.31) .. controls (172.44,33.97) and (172.7,33.69) .. (173.02,33.69) .. controls (173.34,33.69) and (173.6,33.97) .. (173.6,34.31) .. controls (173.6,34.65) and (173.34,34.93) .. (173.02,34.93) .. controls (172.7,34.93) and (172.44,34.65) .. (172.44,34.31) -- cycle ;
\draw  [fill={rgb, 255:red, 0; green, 0; blue, 0 }  ,fill opacity=1 ] (171.44,31.31) .. controls (171.44,30.97) and (171.7,30.69) .. (172.02,30.69) .. controls (172.34,30.69) and (172.6,30.97) .. (172.6,31.31) .. controls (172.6,31.65) and (172.34,31.93) .. (172.02,31.93) .. controls (171.7,31.93) and (171.44,31.65) .. (171.44,31.31) -- cycle ;
\draw  [fill={rgb, 255:red, 0; green, 0; blue, 0 }  ,fill opacity=1 ] (141.66,210.17) .. controls (141.66,209.82) and (141.92,209.55) .. (142.24,209.55) .. controls (142.57,209.55) and (142.83,209.82) .. (142.83,210.17) .. controls (142.83,210.51) and (142.57,210.78) .. (142.24,210.78) .. controls (141.92,210.78) and (141.66,210.51) .. (141.66,210.17) -- cycle ;
\draw  [fill={rgb, 255:red, 0; green, 0; blue, 0 }  ,fill opacity=1 ] (144.16,257.92) .. controls (144.16,257.57) and (144.42,257.3) .. (144.74,257.3) .. controls (145.07,257.3) and (145.33,257.57) .. (145.33,257.92) .. controls (145.33,258.26) and (145.07,258.53) .. (144.74,258.53) .. controls (144.42,258.53) and (144.16,258.26) .. (144.16,257.92) -- cycle ;
\draw  [fill={rgb, 255:red, 0; green, 0; blue, 0 }  ,fill opacity=1 ] (397.25,227.17) .. controls (397.25,226.82) and (397.51,226.55) .. (397.83,226.55) .. controls (398.15,226.55) and (398.41,226.82) .. (398.41,227.17) .. controls (398.41,227.51) and (398.15,227.78) .. (397.83,227.78) .. controls (397.51,227.78) and (397.25,227.51) .. (397.25,227.17) -- cycle ;
\draw  [fill={rgb, 255:red, 0; green, 0; blue, 0 }  ,fill opacity=1 ] (400.25,252.17) .. controls (400.25,251.82) and (400.51,251.55) .. (400.83,251.55) .. controls (401.15,251.55) and (401.41,251.82) .. (401.41,252.17) .. controls (401.41,252.51) and (401.15,252.78) .. (400.83,252.78) .. controls (400.51,252.78) and (400.25,252.51) .. (400.25,252.17) -- cycle ;

\draw (99.6,139.25) node [anchor=north west][inner sep=0.75pt]  [font=\scriptsize]  {$\pi $};
\draw (374.6,143.4) node [anchor=north west][inner sep=0.75pt]  [font=\scriptsize]  {$\chi _{\Delta _{2}}$};
\draw (22.9,235.04) node [anchor=north west][inner sep=0.75pt]  [font=\tiny]  {$1$};
\draw (158.74,195.4) node [anchor=north west][inner sep=0.75pt]  [font=\tiny]  {$z_{1}$};
\draw (163.17,250.06) node [anchor=north west][inner sep=0.75pt]  [font=\tiny]  {$z_{2}$};
\draw (60,212.12) node [anchor=north west][inner sep=0.75pt]  [font=\tiny]  {$\pi(x)$};
\draw (88.55,256.31) node [anchor=north west][inner sep=0.75pt]  [font=\tiny]  {$\pi(y)$};
\draw (206.8,206.99) node [anchor=north west][inner sep=0.75pt]  [font=\tiny]  {$u$};
\draw (486.23,263.42) node [anchor=north west][inner sep=0.75pt]  [font=\tiny]  {$v$};
\draw (460.4,197.32) node [anchor=north west][inner sep=0.75pt]  [font=\tiny]  {$u$};
\draw (207.8,256.4) node [anchor=north west][inner sep=0.75pt]  [font=\tiny]  {$v$};
\draw (295.9,237.04) node [anchor=north west][inner sep=0.75pt]  [font=\tiny]  {$1$};
\draw (284.3,56.84) node [anchor=north west][inner sep=0.75pt]  [font=\tiny]  {$1$};
\draw (24.3,57.84) node [anchor=north west][inner sep=0.75pt]  [font=\tiny]  {$1$};
\draw (80,15.73) node [anchor=north west][inner sep=0.75pt]  [font=\scriptsize]  {$K_{d_{s}}$};
\draw (240.33,269.07) node [anchor=north west][inner sep=0.75pt]  [font=\scriptsize]  {$K_{d_{t}}{}$};
\draw (275.33,242.4) node [anchor=north west][inner sep=0.75pt]  [font=\tiny]  {$\Delta _{2}$};
\draw (128.53,81.73) node [anchor=north west][inner sep=0.75pt]    {$\Delta $};
\draw (69.73,266.67) node [anchor=north west][inner sep=0.75pt]    {$\Delta _{1}$};
\draw (465.33,222.07) node [anchor=north west][inner sep=0.75pt]    {$\Delta _{2}$};
\draw (198.46,241.52) node [anchor=north west][inner sep=0.75pt]  [font=\tiny]  {$\mathcal{N}_{R}( \Delta _{1})$};
\draw (366.67,62.4) node [anchor=north west][inner sep=0.75pt]  [font=\tiny]  {$O$};
\draw (407.74,200.65) node [anchor=north west][inner sep=0.75pt]  [font=\tiny]  {$z_{1}^{'}$};
\draw (417.74,259.65) node [anchor=north west][inner sep=0.75pt]  [font=\tiny]  {$z_{2}^{'}$};
\draw (365.74,215.65) node [anchor=north west][inner sep=0.75pt]  [font=\tiny]  {$x^{'}$};
\draw (367.74,250.65) node [anchor=north west][inner sep=0.75pt]  [font=\tiny]  {$y'$};
\draw (337.64,63.45) node [anchor=north west][inner sep=0.75pt]  [font=\tiny]  {$\overline{p}$};
\draw (99.24,38.65) node [anchor=north west][inner sep=0.75pt]  [font=\tiny]  {$x$};
\draw (98.74,74.15) node [anchor=north west][inner sep=0.75pt]  [font=\tiny]  {$y$};
\draw (186.37,13.88) node [anchor=north west][inner sep=0.75pt]  [font=\tiny]  {$\varphi ( p)$};
\draw (212.73,221.3) node [anchor=north west][inner sep=0.75pt]  [font=\tiny]  {$\Delta _{1}$};
\draw (259.96,222.52) node [anchor=north west][inner sep=0.75pt]  [font=\tiny]  {$\mathcal{N}_{R}( \Delta _{2})$};
\draw (212.87,84.4) node [anchor=north west][inner sep=0.75pt]  [font=\tiny]  {$\varphi ( q)$};
\draw (155.75,51.76) node [anchor=north west][inner sep=0.75pt]  [font=\tiny]  {$r$};
\draw (127.45,48.67) node [anchor=north west][inner sep=0.75pt]    {$L'$};
\draw (156,20.8) node [anchor=north west][inner sep=0.75pt]  [font=\tiny]  {$w_{1}$};
\draw (131.6,198.2) node [anchor=north west][inner sep=0.75pt]  [font=\tiny]  {$w_{1}$};
\draw (390,210.2) node [anchor=north west][inner sep=0.75pt]  [font=\tiny]  {$w'_{1}$};
\draw (170.8,85.8) node [anchor=north west][inner sep=0.75pt]  [font=\tiny]  {$w_{2}$};
\draw (145.6,260.2) node [anchor=north west][inner sep=0.75pt]  [font=\tiny]  {$w_{2}$};
\draw (395.6,256.6) node [anchor=north west][inner sep=0.75pt]  [font=\tiny]  {$w'_{2}$};
\draw (172.22,38.29) node [anchor=north west][inner sep=0.75pt]  [font=\tiny]  {$\overline{w_{1}}$};
\draw (184.8,61.8) node [anchor=north west][inner sep=0.75pt]  [font=\tiny]  {$\overline{w_{2}}$};
\end{tikzpicture}
\caption{Schematic for quasi-isometric embedding computation.}
\label{f : final configuration}
\end{figure} 

We set constants $A = 25 \cdot 10^3 $, $A' = 8 \cdot 10^3$ and $A'' = 25 \cdot 10^3 $ whose role will be clear in the proof of the following claim.

\begin{claim}\label{cl : gromov product bound}

$\min\{\rho_t(\pi(x),z_1), \rho_t(\pi(y),z_2) \} \leq (A + A') \epsilon^{-1} l$ so $ (u,v)_{\Bar{p}} \leq (A + A') \epsilon^{-1} l + 2R $ where $ \Bar{p}$ is the point $x'$ or $y'$ with smallest $\rho_t(x',z_1')$ or $ \rho_t(y',z_2')$.

\begin{proof}
Suppose $\rho_t(\pi(x),z_1) = \min\{\rho_t(\pi(x),z_1), \rho_t(\pi(y),z_2) \} > A \epsilon^{-1} l$ without loss of generality.
Let $P_1 \subset \alpha_{1,p}$, $P_2 \subset \alpha_{1,q}$ be subpaths joining $\pi(x) $ to $w_1$ and $\pi(y) $ to $w_2$ where the point $w_1$ is distance $ A \epsilon^{-1} l$ from $x$ along $\alpha_{1,p}$.
We define $w_2$ similarly.

If $\rho_t(w_1, z_1) \leq A' \epsilon^{-1} l $ then $\rho_t(\pi(x), z_1) \leq (A + A') \epsilon^{-1}l$ and $\rho_t(\pi(y), z_2) \leq l + (A + A') \epsilon^{-1} l + 4 \delta_t$ so $\rho_t(w_2, z_2) \leq l + (2A + A') \epsilon^{-1} l + 4 \delta_t $.

Otherwise $\rho_t(w_1, z_1 ) > A' \epsilon^{-1} l $ and by the stability of geodesics there exists $w_1' \in e_{1,u}$ with $\rho_t(w_1',w_1) \leq R$ and $w_2' \in e_{1,v}$ with $\rho_t(w_2', w_2) \leq R$.
Since $\rho_t(w_1, z_1) > 2R$, $w_1'$ lies on $e_{1,u}$ between $x'$ and $z_1'$.
Hence, by the tripod definition of hyperbolicity there exists $\overline{w} \in e_{1,v}$ with $\rho_t(w_1', \overline{w}) \leq 4 \delta_t$ so $\rho_t(w_1, \overline{w}) \leq R + 4 \delta_t$.

We aim to estimate $\rho_t(w_2', \overline{w})$ to bound $\rho_t(w_1,w_2)$.
Since $y', w_2', \overline{w} \in e_{1,v}$ with $e_{1,v}$ a geodesic in $K_{d_s}$,
we can bound $\lvert \rho_t(y', \overline{w}) - A \epsilon^{-1} l \rvert \leq \rho_t(y',\pi(x)) + \rho_t(w_1,\overline{w}) \leq 2R + l + 4\delta_t$ and $\lvert \rho_t(y', w_2') - A \epsilon^{-1} l \rvert \leq \rho_t(y', \pi(y)) + \rho_t(w_2,w_2') \leq 2R $ so $\rho_t(w_2', \overline{w}) \leq 4R + l + 4\delta_t $ and thus $\rho_t(w_1, w_2) \leq 6R + l + 8 \delta_t \leq A'' \epsilon^{-1} l$.

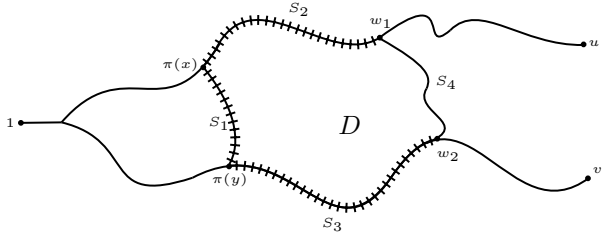
\begin{figure}[!ht]
\centering

\tikzset{every picture/.style={line width=0.75pt}} 

\begin{tikzpicture}[x=0.75pt,y=0.75pt,yscale=-1,xscale=1]

\draw    (140.39,150.06) -- (160.89,149.81) ;
\draw  [fill={rgb, 255:red, 0; green, 0; blue, 0 }  ,fill opacity=1 ] (139.44,150.06) .. controls (139.44,149.53) and (139.87,149.11) .. (140.39,149.11) .. controls (140.91,149.11) and (141.33,149.53) .. (141.33,150.06) .. controls (141.33,150.58) and (140.91,151) .. (140.39,151) .. controls (139.87,151) and (139.44,150.58) .. (139.44,150.06) -- cycle ;
\draw    (160.89,149.81) .. controls (192.5,113.25) and (213,153.25) .. (238.5,113.25) ;
\draw    (238.5,113.25) .. controls (257,75.25) and (295.5,122.75) .. (321,107.25)(238.39,108.26) -- (242.64,110.88)(241.05,104.5) -- (244.92,107.66)(243.96,101.44) -- (247.29,105.17)(247.49,98.82) -- (250.07,103.11)(251.69,96.88) -- (253.3,101.61)(256.07,95.86) -- (256.74,100.82)(260.1,95.63) -- (260.04,100.63)(264.63,95.99) -- (263.94,100.94)(268.74,96.75) -- (267.63,101.62)(272.87,97.83) -- (271.47,102.63)(276.55,98.98) -- (274.98,103.73)(280.7,100.42) -- (279.01,105.13)(284.38,101.76) -- (282.66,106.45)(288.04,103.1) -- (286.34,107.8)(291.67,104.37) -- (290.06,109.11)(295.69,105.67) -- (294.26,110.46)(299.2,106.64) -- (298,111.49)(303.05,107.45) -- (302.22,112.38)(306.77,107.9) -- (306.42,112.89)(310.32,107.93) -- (310.61,112.92)(313.71,107.48) -- (314.76,112.37)(317.29,106.35) -- (319.26,110.95) ;
\draw    (357,103.25) .. controls (372,88.25) and (393,111.25) .. (424,110.75) ;
\draw    (160.89,149.81) .. controls (200,159.25) and (179.5,193.25) .. (227,177.75) ;
\draw    (227,177.75) .. controls (277.5,151.25) and (295,222.25) .. (327,177.75) ;
\draw    (327,177.75) .. controls (364,121.75) and (387,207.75) .. (427,177.75) ;
\draw    (321,107.25) .. controls (361,77.25) and (340,117.75) .. (357,103.25) ;
\draw    (232,121.5) .. controls (248,140.75) and (252.5,160.25) .. (245,171.75)(236.52,123.16) -- (232.55,126.2)(238.91,126.42) -- (234.81,129.29)(241.27,129.96) -- (237.05,132.63)(243.39,133.49) -- (239.03,135.95)(245.39,137.29) -- (240.9,139.49)(247.09,141.04) -- (242.48,142.96)(248.5,144.74) -- (243.76,146.35)(249.74,148.92) -- (244.89,150.14)(250.57,153) -- (245.63,153.77)(251,157.22) -- (246,157.44)(250.92,161.52) -- (245.93,161.09)(250.21,165.83) -- (245.36,164.64)(248.76,170.06) -- (244.21,168) ;
\draw    (321,107.25) .. controls (346,121.25) and (347.5,126.75) .. (345,132.75) ;
\draw    (345,132.75) .. controls (337.5,144.75) and (369,150.75) .. (345,160.25) ;
\draw    (231.71,122.57) .. controls (237.86,114.64) and (237.14,115.71) .. (240.86,109.71)(232.21,117.86) -- (236.16,120.92)(234.62,114.71) -- (238.63,117.69)(236.75,111.59) -- (240.97,114.26) ;
\draw    (245,171.75) .. controls (280.71,167.86) and (301,216.71) .. (327,177.75)(249.22,169.03) -- (249.21,174.03)(253.56,169.27) -- (253.01,174.24)(257.74,169.94) -- (256.72,174.83)(261.76,170.96) -- (260.33,175.75)(265.64,172.28) -- (263.86,176.96)(269.37,173.84) -- (267.31,178.4)(273.41,175.8) -- (271.12,180.24)(276.87,177.66) -- (274.44,182.03)(280.23,179.57) -- (277.71,183.89)(283.88,181.72) -- (281.33,186.03)(287.41,183.79) -- (284.91,188.12)(290.82,185.7) -- (288.46,190.1)(294.13,187.37) -- (292,191.9)(297.68,188.87) -- (295.96,193.56)(301.09,189.9) -- (299.96,194.77)(304.68,190.4) -- (304.46,195.4)(308.17,190.18) -- (309.05,195.1)(311.34,189.25) -- (313.23,193.88)(314.6,187.51) -- (317.37,191.67)(317.32,185.41) -- (320.63,189.16)(320.14,182.6) -- (323.85,185.95)(322.68,179.51) -- (326.67,182.54)(324.92,176.36) -- (329.08,179.14) ;
\draw    (327,177.75) .. controls (334.71,166.71) and (340.74,160.9) .. (347.6,158.9)(327.37,172.95) -- (331.38,175.93)(329.93,169.64) -- (333.82,172.78)(332.52,166.58) -- (336.24,169.92)(335.5,163.5) -- (338.94,167.12)(338.69,160.77) -- (341.72,164.75)(342.38,158.37) -- (344.75,162.77)(346.44,156.64) -- (347.94,161.41) ;
\draw  [fill={rgb, 255:red, 0; green, 0; blue, 0 }  ,fill opacity=1 ] (423.24,110.46) .. controls (423.24,109.93) and (423.67,109.51) .. (424.19,109.51) .. controls (424.71,109.51) and (425.13,109.93) .. (425.13,110.46) .. controls (425.13,110.98) and (424.71,111.4) .. (424.19,111.4) .. controls (423.67,111.4) and (423.24,110.98) .. (423.24,110.46) -- cycle ;
\draw  [fill={rgb, 255:red, 0; green, 0; blue, 0 }  ,fill opacity=1 ] (244.44,172.06) .. controls (244.44,171.53) and (244.87,171.11) .. (245.39,171.11) .. controls (245.91,171.11) and (246.33,171.53) .. (246.33,172.06) .. controls (246.33,172.58) and (245.91,173) .. (245.39,173) .. controls (244.87,173) and (244.44,172.58) .. (244.44,172.06) -- cycle ;
\draw  [fill={rgb, 255:red, 0; green, 0; blue, 0 }  ,fill opacity=1 ] (231.44,122.06) .. controls (231.44,121.53) and (231.87,121.11) .. (232.39,121.11) .. controls (232.91,121.11) and (233.33,121.53) .. (233.33,122.06) .. controls (233.33,122.58) and (232.91,123) .. (232.39,123) .. controls (231.87,123) and (231.44,122.58) .. (231.44,122.06) -- cycle ;
\draw  [fill={rgb, 255:red, 0; green, 0; blue, 0 }  ,fill opacity=1 ] (349.44,158.06) .. controls (349.44,157.53) and (349.87,157.11) .. (350.39,157.11) .. controls (350.91,157.11) and (351.33,157.53) .. (351.33,158.06) .. controls (351.33,158.58) and (350.91,159) .. (350.39,159) .. controls (349.87,159) and (349.44,158.58) .. (349.44,158.06) -- cycle ;
\draw  [fill={rgb, 255:red, 0; green, 0; blue, 0 }  ,fill opacity=1 ] (320.44,107.06) .. controls (320.44,106.53) and (320.87,106.11) .. (321.39,106.11) .. controls (321.91,106.11) and (322.33,106.53) .. (322.33,107.06) .. controls (322.33,107.58) and (321.91,108) .. (321.39,108) .. controls (320.87,108) and (320.44,107.58) .. (320.44,107.06) -- cycle ;
\draw  [fill={rgb, 255:red, 0; green, 0; blue, 0 }  ,fill opacity=1 ] (425.44,178.06) .. controls (425.44,177.53) and (425.87,177.11) .. (426.39,177.11) .. controls (426.91,177.11) and (427.33,177.53) .. (427.33,178.06) .. controls (427.33,178.58) and (426.91,179) .. (426.39,179) .. controls (425.87,179) and (425.44,178.58) .. (425.44,178.06) -- cycle ;

\draw (131.24,146.4) node [anchor=north west][inner sep=0.75pt]  [font=\tiny]  {$1$};
\draw (299.14,145.11) node [anchor=north west][inner sep=0.75pt]    {$D$};
\draw (314.57,95.69) node [anchor=north west][inner sep=0.75pt]  [font=\tiny]  {$w_{1}$};
\draw (348.57,163.11) node [anchor=north west][inner sep=0.75pt]  [font=\tiny]  {$w_{2}$};
\draw (210.14,115.69) node [anchor=north west][inner sep=0.75pt]  [font=\tiny]  {$\pi ( x)$};
\draw (235.14,173.97) node [anchor=north west][inner sep=0.75pt]  [font=\tiny]  {$\pi ( y)$};
\draw (425.14,106.83) node [anchor=north west][inner sep=0.75pt]  [font=\tiny]  {$u$};
\draw (427.14,171.11) node [anchor=north west][inner sep=0.75pt]  [font=\tiny]  {$v$};
\draw (233.5,145.65) node [anchor=north west][inner sep=0.75pt]  [font=\tiny]  {$S_{1}$};
\draw (273.75,88.15) node [anchor=north west][inner sep=0.75pt]  [font=\tiny]  {$S_{2}$};
\draw (290.75,196.65) node [anchor=north west][inner sep=0.75pt]  [font=\tiny]  {$S_{3}$};
\draw (348,124.9) node [anchor=north west][inner sep=0.75pt]  [font=\tiny]  {$S_{4}$};

\end{tikzpicture}

\caption{A diagram $D \rightarrow K_{d_t}$ found to be in $K_{d_s}$ with overwhelming probability.
For the restricted abstract diagram $X$ the dashed line represents $r_X^{-1}(1)$.}
\label{f : possible diagram}
\end{figure}

Consider a connected reduced diagram $D \rightarrow K_{d_t}$ homeomorphic to a disk for the quadrilateral as seen in Figure \ref{f : possible diagram} with $\lvert \partial D \rvert \leq(A'' + 2A)\epsilon^{-1} l+ 3l$ and $\lvert D \rvert \leq (A'' + 2A)\epsilon^{-2} + 3 \epsilon^{-1} $ by the linear isoperimetric inequality, Theorem \ref{t : generic linear iso}, with a geodesic $\alpha$ in $K_{d_t}$ the boundary path between $w_1$ and $w_2$.
By construction we have the sides $S_1$, $S_2$, $S_3$ and $S_4$ satisfying $\lvert S_2 \rvert = \lvert S_3 \rvert = A \epsilon^{-1} l $  so $\lvert S_4 \rvert \leq A''\epsilon^{-1} l + l < 2A \epsilon^{-1} l \leq \lvert S_1 \rvert + \lvert S_2 \rvert + \lvert S_3 \rvert $.
The diagram $D$ gives a reduced connected abstract diagram $(X,r_X, \text{lab})$ restricted in $S$ with $r_{X}^{-1}(1) = P_1^{-1} \cdot C \cdot P_2$ that is fillable by $\mathcal{R}^{d_t}_{m,l}$.
The above observation about side lengths implies $2 \lvert r_X^{-1}(1) \rvert \geq \lvert \partial X \rvert$.
Define $\mathbf{W}_{d_s,l} $ as the possible labels of $P_1^{-1} \cdot C \cdot P_2$ and
note by using (\ref{eq : count bound})
\begin{align*}
    \lvert \mathbf{W}_{d_s,l} \lvert \; &\leq 2l(2m-1)^{d_s l } \lvert \mathbf{E}_{A \epsilon^{-1} l} \rvert^2 \\
    &\leq l^5(A\epsilon^{-1}l)^{\frac{80A}{d_s \epsilon}} (2m-1)^{8 \cdot 10^{-7}\epsilon^2(A \epsilon^{-1}+ 2\epsilon)l} \\
    & \leq l^5(A\epsilon^{-1}l)^{\frac{2 \cdot 10^6}{d_s \epsilon}} (2m-1)^{3 \cdot 10^{-2}\epsilon l}
\end{align*}
\noindent
by our choice of $A$, $S_2$ and $S_3$ being emanating paths using Lemma \ref{l : counting emanating words intrinsic} and that we have already estimated the number of labels for $S_1$.

Setting $C = (A'' + 2A)\epsilon^{-2} + 3 \epsilon^{-1}$, $C' = (A'' + 2A)\epsilon^{-1} + 3$, $g = 3 \cdot 10^{-2}$ and computing $ d_s C + g + d_t < 1/2 - (\epsilon - (75 \cdot 10^{-4} + 3 \cdot 10^{-7} + 3 \cdot 10^{-2}) \epsilon ) $ the hypothesis of Lemma \ref{l : filling diagrams along paths} is satisfied with $d > \epsilon - (75 \cdot 10^{-4} + 3 \cdot 10^{-7} + 3 \cdot 10^{-2}) \epsilon  > \frac{19}{20} \epsilon >  0$.

We conclude for some polynomial $Q(l)$ the probability that $(X, r_X, \text{lab})$ is fillable by 
$\mathcal{R}^{d_t}_{m,l}$ with boundary label of the restricted path in $\mathbf{W}_{d_s,l}$, $\partial X \setminus r_X^{-1}(1)$ a geodesic in $K_{d_t}$ and at least one face $f$ of $X$ bearing a relator $r \in \mathcal{R}^{d_t}_{m,l} \setminus \mathcal{R}^{d_s}_{m,l}$ is less than $Q(l)(2m-1)^{-dl}$. 
As by assumption $(X,r_X, \text{lab})$ is fillable by $\mathcal{R}^{d_t}_{m,l}$ with overwhelming probability in $\mathcal{G}^{d_t}_{m,l}$, this implies that $D$ is a van-Kampen diagram in $K_{d_s}$ and hence $\rho_s(w_1, w_2) = \rho_t(w_1, w_2) \leq A'' \epsilon^{-1} l$.

To find a contradiction consider points $\overline{w_1}, \overline{w_2}, r \in [\varphi(p), \varphi(q) ] $ such that $\rho_s(w_1,\overline{w_1}) \leq l$, $\rho_s(w_2,\overline{w_2}) \leq l$ and $r \in \partial L'$ as seen in Figure \ref{f : final configuration}.
We can find points $\overline{w_1}, \overline{w_2}$ as $L \setminus L'$ is a disconnected set of three ladders and both $\rho_s(x,w_1)$ and $\rho_s(y,w_2) $ are  $ \geq A \epsilon^{-1}l$.
\begin{align*}
    \rho_s(w_1, w_2) & \geq \rho_s(\overline{w_1},r) + \rho_s(\overline{w_2},r) - 2l \tag{Geodesic sides of $\Delta$, triangle inequality and $\rho_s(w_i, \overline{w_i}) \leq l$} \\
    & \geq \rho_s(w_1,x) - l - \rho_s(r,x) + \rho_s(w_2,y) - l - \rho_s(r,y) - 2l  \tag{Triangle inequality and $\rho_s(w_i, \overline{w_i}) \leq l$} \\
    & \geq 2A\epsilon^{-1}l - 2\lvert L' \rvert l - 4l \tag{$ \rho_s(x,w_1) = \rho_s(y,w_2) = A\epsilon^{-1}l$ and $x,y,r \in \partial L'$}  \\
    & \geq 2A\epsilon^{-1}l - 10l \tag{$ \lvert L' \rvert \leq 3$} \\
\end{align*}
\noindent
giving us a contradiction as $ A'' \epsilon^{-1} l < 2A \epsilon^{-1} l - 10 l $.

To observe $(u,v)_{\Bar{p}} \leq (A + A') \epsilon^{-1} l + 2R $ we compute $(u,v)_{\Bar{p}} -2R = \min\{\rho_t(x', \newline z_1'),  \rho_t(y',z_2')\} - 2R \leq (A + A') \epsilon^{-1} l$ due to an application of the triangle inequality, the claim's first assertion and that $\chi$ is isometric restricted on the geodesic sides.

\end{proof}
\end{claim}
Define $ \Bar{x} = \varphi^{-1}(x), \Bar{y} =  \varphi^{-1}(y)$ and without loss of generality assume $\Bar{p} = x'$, we compute:
\begin{alignat*}{3}
  \rho_{t}(u,v) &= \rho_t(u,\Bar{p}) + \rho_t(v,\Bar{p}) - 2(u,v)_{\Bar{p}} \tag{Gromov product}\\
  & \geq \rho_t(u,x') + \rho_t(v,y') - \rho_t(x',y') - 2(u,v)_{\Bar{p}} \tag{Triangle inequality}\\
  & \geq \rho_t(u,\pi(x)) + \rho_t(v,\pi(y)) - (1 + 2(A + A') \epsilon^{-1}) l - 4R \tag{Claim. \ref{cl : gromov product bound} and $\rho_t(\pi(x),\pi(y)) \leq l$}\\
  & \geq \frac{5}{13} \rho_s(p,x) + \frac{5}{13}\rho_s(q,y) - (1 + (2(A + A') + 8) \epsilon^{-1}) l - 4R \tag{Corollary \ref{c : emanating rays are quasi-geodesics}}\\
  & \geq \frac{5}{13} \rho_A(p,q) - (1 + \lambda + (2(A + A') + 8) \epsilon^{-1}) l - \lambda k - 4R\tag{$\varphi \circ \gamma_{1, \cdot} $ geodesics in $K_{d_s}$, triangle inequality and $\rho_A(\Bar{x},\Bar{y}) \leq \lambda l + k$}\\
\end{alignat*}
Since $ (A^{(1)}, \rho_A) \rightarrow (K_{d_s}, \rho_s) \rightarrow (K_{d_t}, \rho_t) $ maps cells to cells as a combinatorial map we have $\rho_t(u,v) \leq \rho_A(p,q)$. 
This proves the lemma.

\end{proof}
\end{lemma}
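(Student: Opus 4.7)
The plan is to exploit the fact that $(A^{(1)}, \rho_A) \to (K_{d_s}, \rho_s)$ is already quasi-isometrically embedded by Theorem \ref{t : build low d round trees}, so it suffices to control how the distance between images of points in $A$ behaves under the quotient $\pi^{d_t}_{d_s}$. Since the map is combinatorial, the upper bound $\rho_t(u,v) \leq \rho_A(p,q)$ is automatic, so the content is the lower bound. For $p, q \in A$, I would pick emanating paths $\gamma_{1,p} \in \Gamma(\rho_A(1,p))$ and $\gamma_{1,q} \in \Gamma(\rho_A(1,q))$, which by \cite[Lem.8.13]{Mackay-conf-rand-16} map to geodesics in $K_{d_s}$ and by Corollary \ref{c : emanating rays are quasi-geodesics} map to $(13/5, 4\epsilon^{-1}l)$-quasi-geodesics in $K_{d_t}$.

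In $K_{d_s}$, form the geodesic triangle $\Delta$ with vertices $1, \varphi(p), \varphi(q)$ and let $L$ be a reduced van-Kampen diagram for it. Using Lemma \ref{l : finding a ladder} (bigons are ladders at low density) together with Lemma \ref{l : cells intersecting along geodesics}, the inner subdiagram $L'$ containing the three extremal cells $R_{1,p}, R_{1,q}, R_{p,q}$ along the periphery can be controlled: Theorem \ref{t : generic linear iso} forces $|L'| \leq 3$. This extracts points $x \in \gamma_{1,p}$ and $y \in \gamma_{1,q}$ with $\rho_s(x,y) \leq l$ and hence $\rho_t(\pi(x), \pi(y)) \leq l$.

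Now transport this to $K_{d_t}$: form the triangle $\Delta_1$ with quasi-geodesic emanating sides $\alpha_{1,p}, \alpha_{1,q}$ and geodesic $[u,v]$, and use stability of quasi-geodesics (\cite[III.H.Thm.1.7]{Bridson-Haefliger-non-positive-curv-geom}, with the quantitative constant $R = 4 \cdot 10^{3} \epsilon^{-1} l$ from \cite[Thm.1.1]{Gouezel-Shchur-corrected-quantitative-version-of-the-morse-lemma}) to compare $\Delta_1$ with a genuine geodesic triangle $\Delta_2$. Apply a tripod approximation $\chi$ to $\Delta_2$, locate the center $O$ and lift the center points $z_1', z_2'$ on the two geodesic sides and their $R$-approximants $z_1, z_2$ on the quasi-geodesic sides. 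The key step is then Claim \ref{cl : gromov product bound}: bound $(u,v)_{\bar{p}} \leq (A+A')\epsilon^{-1} l + 2R$ where $\bar{p}$ is whichever of $\pi(x), \pi(y)$ is projected closer to the tripod center.

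This claim is the main obstacle and uses the diagram rule-out machinery. Assume the Gromov product is large; then marching distance $A\epsilon^{-1} l$ further along each quasi-geodesic side past $\pi(x), \pi(y)$ gives points $w_1, w_2$ that thin-triangle considerations force to satisfy $\rho_t(w_1, w_2) \leq A'' \epsilon^{-1} l$. This yields a quadrilateral van-Kampen diagram $D$ in $K_{d_t}$ with two sides labelled by emanating words (counted via $\mathbf{E}_{A\epsilon^{-1}l}$ using \eqref{eq : count bound}), one side labelled by relator-boundary data (at most $2l(2m-1)^{d_s l}$ labels), and one geodesic side. The hypotheses of Lemma \ref{l : filling diagrams along paths} are checked by a direct calculation: with $C = (A''+2A)\epsilon^{-2} + 3\epsilon^{-1}$, $g \approx 3 \cdot 10^{-2}$, one has $d_s C + g + d_t < 1/2 - \tfrac{19}{20}\epsilon$, so fillability in $K_{d_t}$ involving any new relator has negligible probability. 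Hence $D$ must lie already in $K_{d_s}$, giving $\rho_s(w_1, w_2) \leq A''\epsilon^{-1} l$. But then chasing $w_1, w_2$ back to the nearest points $\overline{w_1}, \overline{w_2}$ on the $K_{d_s}$-geodesic $[\varphi(p), \varphi(q)]$ through $L'$ (which has at most three cells) and using the triangle inequality produces $\rho_s(w_1, w_2) \geq 2A\epsilon^{-1} l - 10 l$, contradicting the upper bound by choice of $A \gg A''$.

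Given Claim \ref{cl : gromov product bound}, the lemma follows by expanding $\rho_t(u,v)$ via the Gromov product identity $\rho_t(u,v) = \rho_t(u,\bar{p}) + \rho_t(v,\bar{p}) - 2(u,v)_{\bar{p}}$, substituting the quasi-geodesic bounds $\rho_t(u, \pi(x)) \geq \tfrac{5}{13}\rho_s(\varphi(p), x) - 4\epsilon^{-1} l$ and similarly for $q$, then using the quasi-isometric embedding constants $(\lambda, k)$ of $\varphi$ together with $\rho_A(\bar{x}, \bar{y}) \leq \lambda l + k$ to replace $\rho_s(\varphi(p), x) + \rho_s(\varphi(q), y)$ by a linear function of $\rho_A(p,q)$ up to additive constants linear in $l$. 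The resulting inequality $\rho_t(u,v) \geq \tfrac{5}{13}\rho_A(p,q) - D l$ is precisely the quasi-isometric embedding condition.
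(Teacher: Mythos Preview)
Your proposal follows essentially the same approach as the paper's proof: the same geodesic triangle in $K_{d_s}$ with $|L'|\leq 3$, the same extraction of $x,y$, the same tripod comparison in $K_{d_t}$, the same diagram rule-out via Lemma~\ref{l : filling diagrams along paths} to establish Claim~\ref{cl : gromov product bound}, and the same Gromov-product computation to finish. Two minor slips: in the paper $A = A'' = 25\cdot 10^3$, so the contradiction is $A''\epsilon^{-1}l < 2A\epsilon^{-1}l - 10l$ (not ``$A \gg A''$''); and $\bar p$ is one of $x', y'$ on the geodesic triangle $\Delta_2$, not $\pi(x),\pi(y)$.
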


\begin{remark}
From \cite[Lem.8.15]{Mackay-conf-rand-16} we can take $\lambda = 1 $, $k = 8l$ and by \cite[Thm.1.1]{Gouezel-Shchur-corrected-quantitative-version-of-the-morse-lemma} we can take $R = 4 \cdot 10^3 \epsilon^{-1} l$ so one can see the composition in Lemma \ref{l : quasi-isometric-embedding} recalling $A = 25 \cdot 10^5$ and $A' = 8 \cdot 10^3 $ as, say, a $(13/5, 10^7(1-2d)^{-1} l)$-quasi-isometric embedding in terms of the model parameters $d$ and $l$.
\end{remark}

We have undistorted combinatorial round trees at all densities so Theorem \ref{t : round trees everywhere} is proved. 
Additionally, every geodesic in the $1$-skeleton of the combinatorial round tree $A$ is an embedded quasi-geodesic in $K_{d_t}$.

\subsection{The Lower Bound}

Here we justify Theorem \ref{t : main result} from Theorem \ref{t : round trees everywhere} with the application of Theorem \ref{t : undistorted round trees lower bound}.

\begin{proof}[Proof of Theorem \ref{t : main result}]
Let $1/8 \leq d < 1/2$ and $\epsilon = 1/2 - d$, Theorem \ref{t : round trees everywhere} says the property of containing an undistorted round tree $A((2m-1)^{\beta \eta l}, H)$ with $\beta = 10^{-7}\epsilon^2$, $\eta = 4^{-1} 10^{-8} \epsilon^3$, $ H = 40 \cdot 10^{14} \epsilon^{-5}$ occurs with overwhelming probability in $\mathcal{G}^{d}_{m,l}$.
The following logarithms are to the base $2m-1$.
By Theorem \ref{t : undistorted round trees lower bound},

$$ \text{Confdim}(\partial_\infty G) \geq 1 + \frac{\log(V)}{\log(H)} = 1 + \frac{\beta \eta l}{\log(H)}.$$

\noindent
for $G \sim \mathcal{G}^{d}_{m,l}$ with overwhelming probability in $\mathcal{G}^{d}_{m,l}$.
Underlying this is $d_t = d$ and $d_s = 10^{-7} \epsilon^3 $ (fitting in with Section \ref{s : build round trees all densities} and choices for the probabilities to work out).
So,

$$\text{Confdim}(\partial_ \infty G) > 1 + \frac{(1/2 - d)^5}{C\log\frac{1}{(1/2 - d)^{5}}} l$$ 

\noindent
where $ C = 10^{17} \geq 1$, say,  with overwhelming probability in $\mathcal{G}^{d}_{m,l}$.
To get the exact formulation stated in the introduction one takes a larger $C$.
\end{proof}

\bibliographystyle{alpha}

\end{document}